\let\accentvec\vec
\let\vec\accentvec     
\DeclareMathOperator*{\argmin}{arg\,min}
\newcommand{\vC}{{\mathbf{C}}}
\newcommand{\vD}{{\mathbf{D}}}
\newcommand{\vI}{{\mathbf{I}}}
\newcommand{\vU}{{\mathbf{U}}}
\newcommand{\vW}{{\mathbf{W}}}
\journalname{Journal of Scientific Computing}
\begin{document}

\title{A Multiphase Image Segmentation Based on Fuzzy Membership Functions and L1-norm Fidelity
\thanks{}
}
\titlerunning{Multiphase Image Segmentation via Fuzzy and L1}

\author{Fang Li \and   Stanley Osher \and Jing Qin \and Ming Yan
}


\institute{F. Li (\Envelope) \at
              Department 
of Mathematics, East China Normal University, and Shanghai Key Laboratory of Pure Mathematics and Mathematical Practice, Shanghai, China.\\
              \email{e-mail: fli@math.ecnu.edu.cn}             \\
           S. Osher, J. Qin \at
              Department of Mathematics, University of California, Los Angeles, CA 90095, USA.\\
\email{sjo@math.ucla.edu,jxq@ucla.edu} \\
             and M. Yan \at
             Department of Computational Mathematics, Science and Engineering and Department of Mathematics, Michigan State University, East Lansing, MI 48824, USA. \\
             \email{e-mail:yanm@math.msu.edu}
}

\date{Received: date / Accepted: date}

\maketitle

\begin{abstract}
In this paper, we propose a variational multiphase image segmentation model based on fuzzy membership functions and L1-norm fidelity. Then we apply the alternating direction method of multipliers to solve an equivalent problem. All the subproblems can be solved efficiently. Specifically, we propose a fast method to calculate the fuzzy median. Experimental results and comparisons show that the L1-norm based method is more robust to outliers such as impulse noise and keeps better contrast than its L2-norm counterpart. Theoretically, we prove the existence of the minimizer and analyze the convergence of the algorithm.
\end{abstract}

\keywords{
 Image segmentation; fuzzy membership function; L1-norm; ADMM; segmentation accuracy.
}

\section{Introduction}
As a fundamental step in image processing, image segmentation partitions an image into several disjoint regions such that pixels in the same region share some uniform characteristics such as intensity, color, and texture. During the last two decades, image segmentation methods using variational methods and partial differential equations are very popular due to their flexibility in problem modeling and algorithm design. There are two key ingredients of variational segmentation models. One is how to describe the regions or boundaries between these regions, and the other is how to model the noise and describe the uniform characteristics of each region.


The Mumford-Shah model~\cite{MS}, a well-known variational segmentation model, penalizes the combination of the total length of the segmentation boundaries and the L2-norm error of approximating the observed image with an unknown piecewise smooth approximation. In other words, the Mumford-Shah model seeks an optimal piecewise smooth function with smooth boundaries to approximate the observed image.

However, the Mumford-Shah model is hard to implement in practice because the discretization of the unknown set of boundaries is very complex. Therefore, a parametric/explicit active contour method is used to represent the segmentation boundaries~\cite{zhu1996region}. In addition, the special Mumford-Shah model with a piecewise constant approximation is studied by Chan and Vese~\cite{CV}, and the level set method~\cite{osher1988fronts} is applied to solve this problem. Using an implicit representation of boundaries, the level set method has many advantages over methods using explicit representations of boundaries. For instance, the level set method handles the topological change of zero level set automatically~\cite{caselles1997geodesic,aubert2006mathematical,Qin2014}. Both the parametric/explicit active contour method and the level set method assume that each pixel belongs to a unique region. An alternative way to represent various regions is to use fuzzy membership functions~\cite{chan2006algorithms,bresson2007fast,mory2007fuzzy,houhou2008fast}, which describe the levels of possible membership. Fuzzy membership functions assume that each pixel can be in several regions simultaneously with probability in [0,1]. These probabilities satisfy two constraints: (i) nonnegativity constraint, i.e., the membership functions are nonnegative at all pixels; (ii) sum-to-one constraint, i.e., the sum of all the membership functions at each pixel equals one. Then the length of boundaries can be approximated by the Total Variation (TV) of fuzzy membership functions. The main advantages of using fuzzy membership functions over other methods include: i) the energy functional is convex with respect to fuzzy membership functions, guaranteeing the convergence and stability of many numerical optimization methods. ii) fuzzy membership function has a larger feasible set, and it is possible to find better segmentation results.

For two-phase segmentation, where there are only two regions, we only need one level set function or one fuzzy membership function. Multiphase segmentation is more challenging than two-phase segmentation since more variables and constraints are involved in representing multiple regions and their boundaries effectively. The two-phase Chan-Vese model~\cite{CV} has been generalized to multiphase segmentation by using multiple level set functions to represent multiple regions~\cite{vese2002multiphase}. Partitioning an image into $N$ disjoint regions requires $\log_2N$ level set functions. The advantage of using multiple level set functions is that it automatically avoids the problems of vacuum and overlap of regions. However, the implementation is not easy, and special numerical schemes are needed to ensure stability~\cite{PCLSM,lie2006binary,softMS,jung2007multiphase}. For fuzzy membership functions, the sum-to-one constraint is not satisfied automatically in multiphase segmentation. However, this constraint is easy to deal with in many cases, e.g., Fuzzy C-Mean (FCM) and its variants have closed-form solutions for the membership functions and are widely used in data mining and medical image segmentation~\cite{FCM,ahmed2002modified,pham2002fuzzy,FCMS2,FLICM,li2010softseg,li2010competition,TVFCM,cai2015variational}. Variable splitting schemes are used in both~\cite{li2010competition} and~\cite{li2010softseg} to get efficient numerical algorithms. The Alternating Direction Method of Multipliers (ADMM) method is used in~\cite{TVFCM} to derive the algorithm with two sets of extra variables. Primal-dual type algorithm is derived in~\cite{CP} to solve the TV regularized FCM segmentation model. Both~\cite{TVFCM}  and~\cite{CP} use projection to simple to handle the constraints of membership functions. Other segmentation approaches include a convex approach~\cite{chambolle2012convex}, two-stage methods~\cite{chan2014two,storath2014fast}, one single level set function approach~\cite{dubrovina2015multi}, et.al.


Noise is unavoidable in images, and it is important to develop segmentation methods that work on noisy images. Among many types of noise, the Gaussian white noise is frequently assumed, and the L2-norm fidelity is adopted. However, when images are corrupted by non-Gaussian noise, in order to obtain a faithful segmentation, one has to model the noise according to its specific type~\cite{sawatzky2013variational,nikolova2004variational,chan2005salt,guo2009fast,Yan13,Yan13b,cai2015variational}. Particularly, the L1-norm fidelity is used for both salt-and-pepper impulse noise and random-valued impulse noise in image processing~\cite{nikolova2004variational,chan2005salt,guo2009fast}. In addition, it is robust to outliers and able to preserve contrast because the denoising process of L1-norm models is determined by the geometry such as area and length rather than the contrast in the L2-norm case~\cite{chan2005aspects}.


Inspired by the fact that L1-norm is more robust to impulse noise and outliers and can better preserve contrast, in this paper, we propose a variational {\it multiphase} fuzzy segmentation model based on {\it L1-norm fidelity} and {\it fuzzy membership functions}. This model can also deal with missing data in images. When there are missing pixels in an image, we randomly assign 0 or 255 at these pixels by considering these pixels as corrupted by salt-and-pepper impulse noise. ADMM~\cite{ADMM1,ADMM2}, which was rediscovered as split Bregman~\cite{SB} and found to be very useful for L1 and TV type optimization problems, is applied to solve this nonconvex optimization problem. By introducing two sets of auxiliary variables, we derive an efficient algorithm with all the subproblems having closed-form solutions.  In the theoretical aspect, we prove the existence of the minimizer and analyze the convergence of the algorithm.
We note that the proposed method is closely related to the method in~\cite{TVFCM} since both methods use TV regularization and ADMM algorithm. The difference is that L1-norm fidelity is
considered in the proposed method, while  L2-norm fidelity is used in~\cite{TVFCM}.

The outline of this paper is as follows. In Section~\ref{sec:preliminary}, we review some closely related existing works.
In Section~\ref{sec:model}, the proposed model is described in detail, and the existence of the minimizer to the model is proved. In Section~\ref{sec:algorithm}, a numerical algorithm is derived, and its convergence analysis is presented. In Section~\ref{sec:experiment}, experimental results and comparisons are presented to illustrate the effectiveness of the proposed method. Finally, the paper is concluded in Section~\ref{sec:conclusion}.

\section{Related works}\label{sec:preliminary}
Let $\Omega\subset \mathbb{R}^2$ be a bounded open subset with Lipschitz boundary, and let $I:\Omega\rightarrow \mathbb{R}^s$ be the given clean or noisy image. Let $s=1$ for grayscale images and $s=3$ for color images. Our goal is to find an $N$-phase ``optimal'' partition $\{\Omega_i\}_{i=1}^N$ such that $\Omega_i \bigcap \Omega_j=\emptyset$ for all  $ i\neq j$ and $\bigcup_{i=1}^N \Omega_i=\Omega$. Define the set of $N$-phase fuzzy membership functions as
\begin{equation*}\label{delta}
\Delta:=\left\{(u_1,...,u_N)\Big|u_i\in BV(\Omega), u_i(x)\ge 0,
\sum_{i=1}^N u_i(x)=1, \forall x\in \Omega\right\},
\end{equation*}
where $BV(\Omega)$ is the space of functions with bounded variation~\cite{aubert2006mathematical}. The closely related works are listed in the following and will be compared with our proposed method in Section~\ref{sec:experiment}. For the sake of simplicity, we use the notations $\mathbf{U} = (u_1,\cdots,u_N)$ and $\mathbf{C} = (c_1,\cdots,c_N)\in \mathbb{R}^{sN}$, where $c_i\in\mathbb{R}$ for grayscale images and $c_i\in\mathbb{R}^3$ for color images.

\begin{itemize}
\item FCM~\cite{FCM}-- Fuzzy c-means clustering method that solves
\begin{equation*}\label{FCM}
 \min_{(\mathbf{U},\mathbf{C})\in \Delta\times\mathbb{R}^{sN}}
\sum_{i=1}^N\int_\Omega \left|I(x) - c_i\right|^2u_i^p(x) \ dx
 \end{equation*}
using the alternating minimization algorithm. Though $p$ can be any number no smaller than $1$, it is commonly set to 2.
\item FCM\_S2~\cite{FCMS2} -- A variant of FCM that solves
\begin{equation*}\label{FCMS2}
 \min_{(\mathbf{U},\mathbf{C})\in \Delta\times\mathbb{R}^{sN}}
\sum_{i=1}^N\left\{\int_\Omega \left|I(x) - c_i\right|^2u_i^p(x) \ dx+\alpha\int_\Omega \left|\bar{I}(x) - c_i\right|^2u_i^p(x) \ dx\right\},
 \end{equation*}
where $\bar{I}$ is obtained by applying the median filter on $I$ and $\alpha>0$ is a weight parameter. It can also be solved by the alternating minimization algorithm, and it is more robust to impulse noise than FCM.


\item FLICM~\cite{FLICM} -- Fuzzy Local Information C-Means clustering method that solves
\begin{align*}
 \min_{(\mathbf{U},\mathbf{C})\in \Delta\times\mathbb{R}^{sN}} &
\sum_{i=1}^N\left\{\int_\Omega \left|I(x) - c_i\right|^2u_i^p(x) \ dx\right.\\
&\left.+\alpha\int_\Omega\int_{y\in\mathcal{N}(x)}
(1-u_i(y))^p\left|I(y)-c_i\right|^2u^p_i(x)dydx\right\},
\end{align*}
where $\mathcal{N}(x)$ is a neighborhood of $x$. FLICM is more robust to both Gaussian noise and impulse noise than FCM.

\item L2FS~\cite{TVFCM} -- L2-norm fidelity based Fuzzy Segmentation method that solves 
\begin{equation}\label{TVFCML2}
 \min_{(\mathbf{U},\mathbf{C})\in \Delta\times\mathbb{R}^{sN}}
\sum_{i=1}^N\left\{\int_\Omega\left\|\nabla u_i(x)\right\|dx +
 \lambda\int_\Omega \left|I(x) - c_i\right|^2u_i(x) \ dx\right\},
 \end{equation}
by ADMM. Here $\lambda>0$ is a parameter and $\int_\Omega\left\|\nabla u_i(x)\right\|dx$ denotes the TV of $u_i$ with $\left\|\nabla u_i(x)\right\|:=\sqrt{(\nabla_{x_1}u_{i}(x))^2+(\nabla_{x_2}u_{i}(x))^2}$ for $x=(x_1,x_2)$. For fixed $\mathbf{C}$,~\cite{TVFCM} is related to the popular TV denoising method~\cite{ROF}.
Note that the similar model is solved by other fast numerical methods in~\cite{li2010competition}.


\item L1SS~\cite{TVSEGL1} -- L1-norm fidelity based Soft Segmentation method, in which $\log_2N$ soft functions are introduced to represent $N$ phases. Since the model for multiphase segmentation is complicated for more than four phases, we show the four-phase model as follows:
\begin{equation*}\label{TVL1segold}
\min_{u_1,u_2\in[0,1],\mathbf{C}\in\mathbb{R}^{sN}}
\left\{\sum_{i=1}^2\int_\Omega\left\|\nabla u_i(x)\right\|dx +
\lambda\sum_{j=1}^4\int_\Omega \left|I(x) - c_j\right|M_j(x)dx\right\},
\end{equation*}
where the membership functions $M_j$, $j=1,\cdots,4$, are represented by soft functions $u_1(x),u_2(x)\in[0,1]$ in the following way:
\begin{align*}
M_1(x)&=u_1(x)u_2(x), &M_2(x)&=u_1(x)(1-u_2(x)), \\
M_3(x)&=(1-u_1(x))u_2(x), &M_4(x)&=(1-u_1(x))(1-u_2(x)).
\end{align*}
{
\item L2L0~\cite{storath2014fast} -- L2-norm fidelity and L0-norm regularization based image partition model:
\begin{equation*}
\min_u \|\nabla u\|_0 +\lambda \|u-I\|_2^2.
\end{equation*}
This model seeks a piecewise constant approximation of the original image $I$. Since this model can not specify the number of classes, a second step is applied to combine some classes if this model returns more classes than required. Here we apply FCM on the piecewise constant approximation $u$ to obtain the final segmentation result.
}
\end{itemize}
{\bf Remark:} There are two advantages of our proposed method over L1SS. Firstly, we use fuzzy membership functions to represent regions, where $N$ fuzzy membership functions are required  for an $N$-phase segmentation. Hence, the solution space is much larger than L1SS, which ensures the higher possibility to obtain optimal segmentation. 
Secondly, the proposed method can take use of other commonly used segmentation methods such as FCM to gain good initialization of fuzzy membership functions. Multiphase segmentation is sensitive to initialization, and a good initialization is very important for a successful segmentation. However, it is hard to use the existing segmentation methods to get a good initialization for soft membership functions in L1SS.



\section{The proposed model}\label{sec:model}

In this paper, we assume that the given image can be approximated by a piecewise constant function, i.e.,
\begin{equation*}
I(x) =\sum_{i=1}^N c_i\chi_{\Omega_i}(x)+ n(x).
\end{equation*}
Here $\chi_{\Omega_i}$ denotes the indicator function of region $\Omega_i$, i.e.,
\[
\chi_{\Omega_i}(x)=\left\{
\begin{array}{l}
1, \ \mbox{if}\ x\in \Omega_i;\\
0, \ \mbox{otherwise},
\end{array}\right.
\]
$c_i$ is a constant that represents the given data in region $\Omega_i$, and $n$ can be outliers, impulse noise or other mixture types rather than Gaussian noise.

Instead of using the L2-norm fidelity to measure the approximation error when the noise is the Gaussian white noise, we use the L1-norm fidelity. Same as in the Mumford-Shah model, we require the segmentation boundaries to be smooth. Then we have the following model
\begin{equation}\label{TVL1}
 \min\limits_{\{\Omega_i\},\vC} \sum_{i=1}^N\int_\Omega\|\nabla \chi_{\Omega_i}(x)\|dx + \lambda\int_\Omega \left|I(x) -\sum_{i=1}^N c_i\chi_{\Omega_i}(x)\right|dx,
 \end{equation}
where $\lambda>0$ is a tuning parameter. Note that the TV of $\chi_{\Omega_i}$ in the first term is equal to the length of boundary $\partial \Omega_i$. An equivalent form of model (\ref{TVL1}) is
\begin{equation}\label{TVL12}
 \min\limits_{\{\Omega_i\},\vC}  \sum_{i=1}^N\left\{\int_\Omega\|\nabla \chi_{\Omega_i}(x)\|dx + \lambda\int_\Omega \left|I(x) - c_i\right|\chi_{\Omega_i}(x)dx\right\}.
 \end{equation}
Because $\chi_{\Omega_i}$ can take values 0 and 1 only and $\{\chi_{\Omega_i}\}$ is a partition, $(\chi_{\Omega_1},\dots,\chi_{\Omega_N})$ belongs to the set
$$\Delta_0:=\left\{(u_1,...,u_N)\big|u_i\in BV(\Omega), u_i(x)\in \{0,1\}, \sum_{i=1}^N u_i(x)=1, \forall x\in \Omega\right\}.$$
At any point $x\in\Omega$, there is only one function having value 1, and all the other functions have value 0. Thus set $\Delta_0$ is not continuous, which leads to difficulties and instabilities in numerical implementations. However, we can relax binary indicator functions $\{\chi_{\Omega_i}\}$ to fuzzy membership functions $\{u_i\}$, which satisfy the nonnegativity constraint and the sum-to-one constraint, i.e., $(u_1,...,u_N)$ belongs to the set $\Delta$ defined in (\ref{delta}).
It is obvious that $u_i(x)\in[0,1]$ and $\Delta$ is a simplex at any $x\in\Omega$. So $u_i(x)$ can be understood as the probability of pixel $x$ to be in the $i$th class. Then we can rewrite our model (\ref{TVL12}) as
\begin{equation}\label{TVFCML1}
 \min_{(\mathbf{U},\mathbf{C})\in \Delta\times\mathbb{R}^{sN}}
 E(\mathbf{U},\mathbf{C})=\sum_{i=1}^N\left\{\int_\Omega\left\|\nabla u_i(x)\right\|dx +
 \lambda\int_\Omega \left|I(x) - c_i\right|u_i(x) \ dx\right\}.
 \end{equation}
Note that model (\ref{TVFCML1}) is convex with respect to $\mathbf{U}$ and $\mathbf{C}$ separately but not jointly. The difference between~\eqref{TVFCML1} and~\eqref{TVFCML2} is that the L2 fidelity term in~\eqref{TVFCML2} is replaced by the L1 fidelity term. The existence of a minimizer for $E(\mathbf{U},\mathbf{C})$ in (\ref{TVFCML1}) is stated in Theorem~\ref{minimizer}.

\begin{theorem}[Existence of a minimizer]\label{minimizer}
Given an image $I\in L^\infty(\Omega)$, there exists a minimizer of $E(\mathbf{U},\mathbf{C})$ in $\Delta\times\mathbb{R}^{sN}$ for any parameter $\lambda>0$.
\end{theorem}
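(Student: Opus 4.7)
The plan is to apply the direct method of the calculus of variations. The energy $E$ is nonnegative, so $\inf E$ is finite and a minimizing sequence $(\mathbf{U}^k,\mathbf{C}^k)\subset\Delta\times\mathbb{R}^{sN}$ exists; the goal is to extract a limit lying in $\Delta\times\mathbb{R}^{sN}$ at which $E$ is no larger than the infimum.

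First I would handle compactness for the membership functions. Since $u_i^k(x)\in[0,1]$ almost everywhere and $\Omega$ is bounded, the sequence $\{u_i^k\}$ is uniformly bounded in $L^1(\Omega)$ (hence in $L^p(\Omega)$ for all $p$). Because $\sup_k E(\mathbf{U}^k,\mathbf{C}^k)<\infty$, the TV seminorms $\int_\Omega\|\nabla u_i^k\|\,dx$ are also uniformly bounded, so each $u_i^k$ is bounded in $BV(\Omega)$. By the standard compact embedding $BV(\Omega)\hookrightarrow L^1(\Omega)$, we may pass to a subsequence (relabeled) such that $u_i^k\to u_i^*$ in $L^1(\Omega)$ and almost everywhere for every $i$. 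The pointwise a.e.\ convergence preserves the constraints $u_i^*(x)\geq 0$ and $\sum_{i=1}^N u_i^*(x)=1$, so $\mathbf{U}^*\in\Delta$.

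Next I would show that $\mathbf{C}^k$ can be taken in a compact subset of $\mathbb{R}^{sN}$. The key observation is that for each fixed $\mathbf{U}^k$, the optimal $c_i$ is a weighted $L^1$ median of $I$ with weights $u_i^k$, so one may assume without loss of generality that $|c_i^k|\leq \|I\|_{L^\infty(\Omega)}$ for every $i$ and every $k$ (equivalently: replacing a component $c_i^k$ lying outside the essential range of $I$ by its projection onto $[\,\operatorname{ess\,inf}I,\operatorname{ess\,sup}I\,]$ does not increase the fidelity term and leaves the TV term unchanged). Thus after a further subsequence we have $c_i^k\to c_i^*\in\mathbb{R}^s$ with $|c_i^*|\leq\|I\|_{L^\infty}$, so $\mathbf{C}^*\in\mathbb{R}^{sN}$.

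Finally I would establish lower semicontinuity of $E$ along this subsequence. The TV term $\int_\Omega\|\nabla u_i(x)\|\,dx$ is lower semicontinuous with respect to $L^1(\Omega)$-convergence, which is the classical property of the BV seminorm. For the fidelity term, since $c_i^k\to c_i^*$ and $I\in L^\infty(\Omega)$, the functions $|I(x)-c_i^k|$ are uniformly bounded and converge pointwise a.e.\ to $|I(x)-c_i^*|$; combined with $u_i^k\to u_i^*$ in $L^1(\Omega)$ and $0\leq u_i^k\leq 1$, the dominated convergence theorem yields
\begin{equation*}
\int_\Omega |I(x)-c_i^k|\,u_i^k(x)\,dx \;\longrightarrow\; \int_\Omega |I(x)-c_i^*|\,u_i^*(x)\,dx.
\end{equation*}
Summing over $i$ and combining with the lsc of the TV terms gives $E(\mathbf{U}^*,\mathbf{C}^*)\leq\liminf_k E(\mathbf{U}^k,\mathbf{C}^k)=\inf E$, so $(\mathbf{U}^*,\mathbf{C}^*)$ is a minimizer.

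The main obstacle is the a priori control of $\mathbf{C}^k$, since nothing in the raw energy prevents $c_i^k$ from escaping to infinity when $u_i^k$ becomes small on a set where $|I-c_i^k|$ is large; the weighted-median truncation argument circumvents this and is the one nontrivial step. Once $\mathbf{C}^k$ is confined to a compact box, the rest is the standard BV compactness plus lsc argument.
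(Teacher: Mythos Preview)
Your proposal is correct and follows essentially the same direct-method strategy as the paper: BV compactness for $\mathbf{U}$, an a priori bound $|c_i^k|\le\|I\|_{L^\infty}$ for $\mathbf{C}$, and lower semicontinuity. The only minor differences are that the paper invokes the first-order optimality condition in $c_i$ (implicitly replacing each $c_i^k$ by the weighted median for the given $u_i^k$) to obtain the bound, whereas you use the cleaner truncation-onto-the-essential-range argument, and the paper uses Fatou's lemma for the fidelity term where you use dominated convergence; both variants are valid and yield the same conclusion.
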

\begin{proof}
Since $E(\vU,\vC)$ is positive and proper, the infimum of $E(\vU,\vC)$ must be finite. Let us pick a minimizing sequence $(\mathbf{U}^n,\mathbf{C}^n)\in \Delta\times\mathbb{R}^{sN}$ that satisfies $E(\mathbf{U}^n,\mathbf{C}^n)\rightarrow \inf_{\vU,\vC} E(\mathbf{U},\mathbf{C})$ as $n\rightarrow\infty$. Then there exists a constant $M>0$ such that
$$E(\mathbf{U}^n,\mathbf{C}^n)=\sum_{i=1}^N\left\{\int_\Omega\left\|\nabla u_i^n(x)\right\|dx +
 \lambda\int_\Omega \left|I(x) - c_i^n\right|u_i^n(x) \ dx\right\}\le M.$$
Then each term in $E(\vU^n,\vC^n)$ is bounded, i.e., for each $i=1,\cdots,N$,
\begin{equation}\label{tvisbound}
\int_\Omega \|\nabla u_i^n(x)\|dx\le M,\
\int_\Omega |I(x)-c_i|u^n_i(x)dx\le M.
\end{equation}
Since $u_i^n(x)\in[0,1]$, we have $\int_\Omega u_i^n(x)dx\le |\Omega|$, where $|\Omega|$ is the area of $\Omega$.  Together with the first equality in (\ref{tvisbound}), we have that $u_i^n$ is uniformly bounded in $BV(\Omega)$ for all $i = 1,..., N$. By the compactness property of $BV(\Omega)$ and the relative compactness of $BV(\Omega)$ in $L^1(\Omega)$, up to a subsequence also denoted by $\{u_i\}^n$ after relabeling, there exists a function $u_i^*\in BV(\Omega)$ such that
\begin{align*}
u_i^n&\rightarrow u_i^* \ \ \mbox{strongly in} \  L^1(\Omega),\\
\hspace{-1.1cm}u_i^n&\rightarrow u_i^* \ \ \mbox{a.e.}\  x\in \Omega,\\
\hspace{1.3cm}\nabla u_i^n &\rightharpoonup \nabla u_i^* \ \
\mbox{in the sense of measure.}
\end{align*}
 Then by the lower semicontinuity of the TV semi-norm,
\begin{equation}\label{lsc1}
\int_\Omega \|\nabla u_i^*(x)\|dx\le
\liminf_{n\rightarrow\infty}\int_\Omega \|\nabla u_i^n(x)\|dx.
\end{equation}
Meanwhile since $\mathbf{U}^n = (u_1^n,...,u_N^n)\in \Delta$, we have $\mathbf{U}^* = (u_1^*,...,u_N^*)\in\Delta$.

It is easy to derive the first order optimality condition about $c_i^n$, which is
\begin{equation*}\label{ck}
0\in\int_{\Omega}\partial|I(x)-c_i^n|u_i^n(x)dx,
\end{equation*}
where $\partial|\cdot|$ is the subdifferential of $|\cdot|$. Since $u_i^n(x)\ge 0$ and $\int_\Omega u_i^n(x)dx>0$, the above equation implies that the constant $c_i^n$ satisfies
$$|c_i^n|\le \|I\|_\infty.$$
By the boundedness of sequence $\{c_i^n\}$, we can extract a subsequence also denoted by $\{c_i^n\}$  such that  for some constant $c_i^*$
$$c_i^n\rightarrow c_i^*, \ \mbox{as}\  n\to\infty.$$

Finally, since $u_i^n(x)\rightarrow u_i^*(x)$, a.e. $x\in \Omega$ and $c_i^n\rightarrow c_i^*$ as $n\to\infty$, Fatou's lemma yields
\begin{equation}\label{lsc3}
\int_\Omega |I(x)-c_i^*|u_i^*(x)dx \le
\liminf_{n\rightarrow\infty}\int_\Omega |I(x)-c_i^n|u_i^n(x)dx.
\end{equation}
Combining inequalities (\ref{lsc1}) and (\ref{lsc3}) for all $i$, on a suitable subsequence, we established
\begin{equation*}
E(\mathbf{U}^*, \mathbf{C}^*)\le \liminf_{n\rightarrow \infty}E(\mathbf{U}^n,\mathbf{C}^n) = \inf_{\vU,\vC} E(\mathbf{U},\mathbf{C}),
\end{equation*}
and hence $(\mathbf{U}^*,\mathbf{C}^*)$ must be a minimizer of the energy $E$. This completes the proof.\qed
\end{proof}

The minimizer of $E(\mathbf{U},\mathbf{C})$ is not unique due to the following hidden symmetry property. Denote $S_N$ as the permutation group of $\{1,...,N\}$, i.e., each permutation $\sigma\in S_N$ is defined as a one-to-one map $\sigma:\{1,...,N\}\rightarrow \{1,...,N\}$ such that $\{\sigma(1),...,\sigma(N)\}$ is a rearrangement of $\{1,...,N\}$. Denote $\mathbf{U}_\sigma = (u_{\sigma(1)},...,u_{\sigma(N)})$, $\mathbf{C}_\sigma = (c_{\sigma(1)},...,c_{\sigma(N)})$. It is straightforward to show the following theorem.

\begin{theorem}[Symmetry of minimizer]\
For any $(\mathbf{U},\mathbf{C})\in \Delta\times \mathbb{R}^N$ and any $\sigma\in S_N$,
$$E(\mathbf{U}_\sigma, \mathbf{C}_\sigma)=E(\mathbf{U},\mathbf{C}).$$
In particular, suppose that $(\mathbf{U}^*, \mathbf{C}^*)$ is a minimizer of $E(\mathbf{U},\mathbf{C})$, i.e.,
$$(\mathbf{U}^*, \mathbf{C}^*)= \mathop{\arg\min}_{(\mathbf{U},\mathbf{C})\in \Delta\times \mathbb{R}^N } E(\mathbf{U}, \mathbf{C}).$$
Then, for any $\sigma\in S_N$, we have
$$(\mathbf{U}^*_\sigma, \mathbf{C}^*_\sigma)= \mathop{\arg\min}_{(\mathbf{U},\mathbf{C})\in \Delta\times \mathbb{R}^N } E(\mathbf{U}, \mathbf{C}).$$
\end{theorem}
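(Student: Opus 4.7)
The plan is to exploit the fact that the energy $E(\mathbf{U},\mathbf{C})$ is written as a sum of $N$ terms, where the $i$-th summand depends \emph{only} on the paired variables $(u_i, c_i)$, so permuting indices merely reshuffles the order of summation. The first step is to verify that $\Delta \times \mathbb{R}^{sN}$ is invariant under any $\sigma \in S_N$: the nonnegativity condition $u_i \geq 0$ and the sum-to-one condition $\sum_i u_i = 1$ are both symmetric in the index $i$, so $\mathbf{U}_\sigma \in \Delta$ whenever $\mathbf{U}\in\Delta$, and $\mathbf{C}_\sigma \in \mathbb{R}^{sN}$ trivially.

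Next I would substitute the permuted tuple into the functional,
\begin{equation*}
E(\mathbf{U}_\sigma, \mathbf{C}_\sigma) = \sum_{i=1}^N \left\{\int_\Omega \|\nabla u_{\sigma(i)}(x)\|\,dx + \lambda \int_\Omega |I(x)-c_{\sigma(i)}|\, u_{\sigma(i)}(x)\,dx\right\},
\end{equation*}
and reindex the sum via $j = \sigma(i)$. Since $\sigma$ is a bijection on $\{1,\dots,N\}$, the sum over $i$ becomes precisely the sum over $j$, recovering $E(\mathbf{U},\mathbf{C})$ term by term. This proves the first assertion.

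For the second assertion, if $(\mathbf{U}^*,\mathbf{C}^*)$ is a minimizer, then by the invariance just proved and the fact that $(\mathbf{U}^*_\sigma, \mathbf{C}^*_\sigma)$ lies in the feasible set $\Delta \times \mathbb{R}^{sN}$, we immediately have
\begin{equation*}
E(\mathbf{U}^*_\sigma, \mathbf{C}^*_\sigma) = E(\mathbf{U}^*,\mathbf{C}^*) = \inf_{(\mathbf{U},\mathbf{C}) \in \Delta \times \mathbb{R}^{sN}} E(\mathbf{U},\mathbf{C}),
\end{equation*}
so $(\mathbf{U}^*_\sigma, \mathbf{C}^*_\sigma)$ is also a minimizer. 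There is essentially no obstacle here: the proof is pure bookkeeping, and the author's own phrasing ``straightforward to show'' is accurate. The only mild subtlety is remembering to check that the feasible set itself is permutation-invariant (so that we do not accidentally move outside $\Delta$), which is handled in the first step.
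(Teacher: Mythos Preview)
Your proposal is correct and matches the paper's treatment: the paper does not provide an explicit proof but simply asserts that the theorem is ``straightforward to show,'' and your argument---checking permutation-invariance of the feasible set $\Delta\times\mathbb{R}^{sN}$ and then reindexing the sum via the bijection $\sigma$---is exactly the routine verification the authors have in mind.
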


\section{The numerical algorithm and its convergence analysis}\label{sec:algorithm}
In this section, we provide an efficient algorithm based on ADMM and discuss its convergence.

\subsection{The algorithm}

ADMM is applied, in this subsection, to solve the proposed fuzzy segmentation model~\eqref{TVFCML1}. We introduce two sets of auxiliary variables
$\mathbf{D}=(d_1,...,d_N), \mathbf{W}=(w_1,...,w_N)$ such that $\nabla u_i = d_i, u_i=w_i $. Then the
 model (\ref{TVFCML1}) is equivalent to the following minimization problem with equality constraints:
\begin{equation}\label{cons}
 \begin{array}{rl}\displaystyle
 \min_{\mathbf{D}, \mathbf{W},\mathbf{C},\mathbf{U}} &\displaystyle\sum_{i=1}^N\left\{\int_\Omega\| d_i(x)\|dx + \lambda\int_\Omega \left|I(x) - c_i\right|w_i(x) \ dx\right\}+\delta_\Delta(\mathbf{W}),\\\displaystyle
  \mbox{subject to} &\nabla u_i = d_i, u_i=w_i, \forall i=1,\dots,N,
 \end{array}
 \end{equation}
 where $\delta_\Delta$ is the indicator function of the set $\Delta$, i.e.,
\[\delta_\Delta(\mathbf{W})= \left\{
 \begin{array}{l}
 0, \ \ \ \ \  \mbox{if} \ \mathbf{W}\in\Delta,\\
 +\infty, \ \mbox{otherwise}.
 \end{array}\right.\]
The augmented Lagrangian function for problem (\ref{cons}) is:

\begin{align*}\label{Lag}
&\mathscr{L}(\mathbf{D,W,C,U;\Lambda_D,\Lambda_W})\\&=\sum_{i=1}^N\left\{\int_\Omega\left\|d_i(x)\right\|dx + \lambda\int_\Omega \left|I(x) - c_i\right|w_i(x) \ dx \right\}+\delta_\Delta(\mathbf{W})\\
&+\sum_{i=1}^N\left\{\left\langle\lambda_{d_i},\nabla u_i-d_i\right\rangle+\frac{r}{2}\int_\Omega \left\|\nabla u_i(x)-d_i(x)\right\|^2dx\right\}\\
&+\sum_{i=1}^N\left\{\left\langle\lambda_{w_i}, u_i-w_i\right\rangle+\frac{r}{2}\int_\Omega \left|u_i(x)-w_i(x)\right|^2dx\right\},
\end{align*}
where $\Lambda_\mathbf{D} = (\lambda_{d_1},...,\lambda_{d_N}),\Lambda_\mathbf{W} = (\lambda_{w_1},...,\lambda_{w_N})$ are the Lagrangian multipliers and $r$ is a positive constant. Here $\langle \lambda_{d_i}, \nabla u_i-d_i\rangle=\int_\Omega \lambda_{d_i}^T(x)(\nabla u_i(x)-d_i(x))dx$, and $\langle\lambda_{w_i}, u_i-w_i\rangle=\int_\Omega \lambda_{w_i}(x)(u_i(x)-w_i(x))dx$.




The ADMM solves the primal variables one by one in the Gauss-Seidel style and then updates the dual variables (Lagrangian multipliers). It can be summarized as follows:	
\begin{align*}
\mathbf{D}^{k+1} &=\argmin\limits_{\mathbf{D}}\mathscr{L}({\vD,\vW^k,\vC^{k},\vU^{k};\Lambda_\vD^k,\Lambda_\vW^k}),\\
\mathbf{W}^{k+1} &=\argmin\limits_{\mathbf{W}}\mathscr{L}({\vD^{k+1},\vW,\vC^{k},\vU^{k};\Lambda_\vD^{k},\Lambda_\vW^k}),\\
\mathbf{C}^{k+1} &=\argmin\limits_{\mathbf{C}}\mathscr{L}({\vD^{k+1},\vW^{k+1},\vC,\vU^{k};\Lambda_\vD^{k},\Lambda_\vW^{k}}),\\
\mathbf{U}^{k+1} &=\argmin\limits_{\mathbf{U}}\mathscr{L}({\vD^{k+1},\vW^{k+1},\vC^{k+1},\vU;\Lambda_\vD^{k},\Lambda_\vW^{k}}),\\
\lambda_{d_i}^{k+1} &=\lambda_{d_i}^{k}+r\left(\nabla u_i^{k+1}-d_i^{k+1}\right),\\
\lambda_{w_i}^{k+1} &=\lambda_{w_i}^{k}+r\left(u_i^{k+1}-w_i^{k+1}\right).
\end{align*}
In the following, we show how to solve each subproblem and then give the algorithm.


\subsection*{$\mathbf{D}$-subproblem}
The subproblem for $\mathbf{D}$ is
equivalent to
\[
\vD^{k+1}=\argmin_{\mathbf{D}}\sum_{i=1}^N\left\{\int_\Omega\|d_i(x)\|dx+\frac{r}{2}\int_\Omega \left\|d_i(x)-\nabla
u^{k}_i(x)-\frac{\lambda^k_{d_i}(x)}{r}\right\|^2dx\right\}.
\]
This is separable and the optimal solution of $d^{k+1}_i$ can be explicitly expressed using shrinkage operators. We simply compute
\begin{equation*}\label{solveD}
d^{k+1}_i(x) = \mathcal{S}\left(\nabla u_i^k(x)+\frac{\lambda_{d_i}^k(x)}{r},\frac{1}{r}\right),
\end{equation*}
where $\mathcal{S}$ is the shrinkage operator defined as
$$\mathcal{S}(v,\tau):={\frac{v}{\|v\|}}*\max\left({\|v\|-\tau,0}\right).$$
For the sake of simplicity, we denote this step as
\begin{align*}\vD^{k+1}=\mathcal{S}\left( \nabla \vU^{k}+{\frac{\Lambda_\vD^{k}}{r}},{\frac{1} {r}}\right).\end{align*}


\subsection*{$\mathbf{W}$-subproblem}
The subproblem for $\mathbf{W}$ is
equivalent to
\[\min\limits_{\mathbf{W}}\sum_{i=1}^N\left\{\frac{r}{2}\int_\Omega \left|w_i(x)-u^{k}_i(x)-\frac{\lambda^k_{w_i}(x)}{r}+\frac{\lambda|I(x) - c^{k}_i|}{r}\right|^2dx\right\}+\delta_\Delta(\mathbf{W}).\]
Since $\Delta$ is a convex simplex at any $x\in\Omega$, the solution is given by
\begin{equation*}\label{solveW}
\mathbf{W}^{k+1} = \Pi_\Delta \left(\left[u_i^{k}+\frac{\lambda_{w_i}^k}{r}-\frac{\lambda|I-c_i^{k}|}{r}\right]_{i=1}^N\right),
\end{equation*}
where $\Pi_\Delta$ is the projection onto the simplex $\Delta$, for which more details can be found in~\cite{chen2011projection}. We denote the step as
\begin{equation*}
\vW^{k+1} = \Pi_\Delta \left(\vU^k+{\frac{\Lambda^k_\vW} {r}}-{\lambda \frac{|I-\vC^k|}{r}}\right).
\end{equation*}



\subsection*{$\mathbf{C}$-subproblem}\label{psi}
The subproblem for $\mathbf{C}$ is
\begin{equation*}\label{subC}
\vC^{k+1}=\argmin_{\mathbf{C}}\sum_{i=1}^N\int_\Omega \left|I(x) - c_i\right|w^{k+1}_i(x) \ dx.
\end{equation*}
It is separable, and $c^{k+1}_i$ can be solved independently.
The optimality condition for each $c^{k+1}_i$ is
\begin{equation}\label{subC2}
0\in -\int_\Omega \partial|I(x)-c^{k+1}_i|w^{k+1}_i(x)dx.
\end{equation}
Because the right-hand side of~\eqref{subC2} is maximal monotone~\cite{bauschke2011convex}, the bisection method and ADMM are applied to solve it~\cite{fuzzy_median,TVSEGL1}. The next lemma provides a new way to find an optimal solution for discrete cases.

\begin{lemma}\label{lemma1}
Given a finite non-decreasing sequence $\{I_{[j]}\}_{j=1}^n$, i.e.,
$$I_{[1]}\le I_{[2]}\le ... \le I_{[n]},$$
and a non-negative sequence $\{w_{[j]}\}_{j=1}^n$ with $A=\sum_{j=1}^nw_{[j]}>0$, the optimal solution set for
\begin{equation}\label{lemma_fcm}
\min_c\sum_{j=1}^n\left|I_{[j]}-c\right|w_{[j]},
\end{equation}
is $\left[I_{[j^*]},I_{[j_*+1]}]\right]$, where $j^*$ and $j_*$ satisfy
\begin{align*}
A-2\sum_{j=1}^{j^*}{w}_{[j]}\leq 0< A-2\sum_{j=1}^{j^*-1}{w}_{[j]},\\
A-2\sum_{j=1}^{j_*+1}{w}_{[j]}<0\leq A-2\sum_{j=1}^{j_*}{w}_{[j]}.
\end{align*}
The fuzzy median of $\{I_j\}_{j=1}^n$ with respect to the weight $\{w_{[j]}\}_{j=1}^n$~\cite{fuzzy_median}, which is defined as $(I_{[j^*]}+I_{[j_*+1]}])/2$, is an optimal solution. If, in addition, there exists $j^*$ such that
\begin{align*}
A-2\sum_{j=1}^{j^*}{w}_{[j]} < 0 < A-2\sum_{j=1}^{j^*-1}{w}_{[j]},
\end{align*}
then $j_*=j^*-1$, and the optimal solution is unique.
\end{lemma}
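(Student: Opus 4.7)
The function $f(c) := \sum_{j=1}^n |I_{[j]} - c|\, w_{[j]}$ is convex and piecewise affine in $c$ with kinks only at the distinct values among $\{I_{[j]}\}$, so my plan is to attack it via one-sided derivatives and the standard optimality condition $0 \in \partial f(c)$. The lemma is essentially a restatement of the classical characterization of the weighted median, so the main work is bookkeeping rather than a deep inequality.

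First, let $S_k := \sum_{j=1}^k w_{[j]}$, so that $S_0 = 0$ and $S_n = A$. I would compute
\begin{equation*}
f'_+(c) = \sum_{j:\, I_{[j]} \leq c} w_{[j]} - \sum_{j:\, I_{[j]} > c} w_{[j]}, \qquad f'_-(c) = \sum_{j:\, I_{[j]} < c} w_{[j]} - \sum_{j:\, I_{[j]} \geq c} w_{[j]},
\end{equation*}
and observe that on each open interval $(I_{[k]}, I_{[k+1]})$ both derivatives equal $2S_k - A$, which is nondecreasing in $k$ because $w_{[j]} \geq 0$.

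Next I would rewrite the optimality condition $f'_-(c) \leq 0 \leq f'_+(c)$ in terms of the partial sums $S_k$. At a node $c = I_{[k]}$ (taking $k$ to be the largest index with this value, to handle ties) this becomes $2 S_{k-1} - A \leq 0 \leq 2 S_k - A$, while on an open interval $(I_{[k]}, I_{[k+1]})$ it reduces to $2S_k - A = 0$. Monotonicity of $\{S_k\}$ then implies that the set of optimal $c$ is a single closed interval, whose left endpoint is the smallest $I_{[k]}$ with $S_k \geq A/2$ (namely $I_{[j^*]}$ by the defining inequalities for $j^*$) and whose right endpoint is $I_{[k+1]}$ where $k$ is the largest index with $S_k \leq A/2$ (namely $I_{[j_*+1]}$). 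This yields exactly the characterization in the statement.

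Finally, since the optimal set is the closed interval $[I_{[j^*]}, I_{[j_*+1]}]$, the midpoint $(I_{[j^*]} + I_{[j_*+1]})/2$, i.e.\ the fuzzy median, is automatically optimal. Under the strict inequality hypothesis $S_{j^*-1} < A/2 < S_{j^*}$, no index $j \leq j^*-1$ satisfies $S_j = A/2$, forcing $j_* = j^*-1$ and collapsing the interval to $\{I_{[j^*]}\}$, which gives uniqueness. The only mild subtlety I anticipate is handling repeated values $I_{[j]} = I_{[j+1]}$ and zero weights $w_{[j]} = 0$; both are absorbed by grouping tied indices when forming $f'_\pm$, after which the monotone partial-sum argument goes through verbatim.
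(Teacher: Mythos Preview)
Your proposal is correct and follows essentially the same route as the paper: both arguments write the optimality condition as $0$ lying in the (sub)derivative of the piecewise-linear objective, express the derivative on each interval as $\pm(A-2S_k)$, and read off the interval of minimizers from the monotonicity of the partial sums $S_k$. The only cosmetic difference is that the paper packages things via the set-valued map $h(c)=\sum_j\partial|I_{[j]}-c|\,w_{[j]}$ (which equals $-\partial f(c)$ in your notation) rather than via one-sided derivatives $f'_\pm$, but the computations and the identification of $j^*$, $j_*$ are identical.
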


\begin{proof}
The optimality condition of~\eqref{lemma_fcm} is
\begin{equation*}
0\in h(c):=\sum_{j=1}^n\partial|I_{[j]}-c|{{w_{[j]}}}.
\end{equation*}
We can see that $h(c)$ is non-increasing and it can be expressed as
\begin{align*}
h(c) = \left\{\begin{array}{ll}A, & c< I_{[1]},\\
{[A-2{w}_{[1]},A]}, & c = I_{[1]},\\
A-2{w}_{[1]}, & c\in (I_{[1]},I_{[2]}) ,\\
\cdots&\\
{[A-2\sum_{j=1}^s{w}_{[j]},A-2\sum_{j=1}^{s-1}{w}_{[j]}]}, & c = I_{[s]},\\
A-2\sum_{j=1}^s{w}_{[j]}, & c_i\in (I_{[s]},I_{[s+1]}) ,\\
\cdots &\\
-A, & c>I_{[n]}.\\
\end{array}\right.
\end{align*}
The range of $h$ is $[-A,A]$, and $h(c)$ can take multiple values when $c=I_{[j]}$ for any $j$ with $w_{[j]}\neq0$. Therefore, we can always find $j^*$ such that
\begin{align*}
A-2\sum_{j=1}^{j^*}{w}_{[j]}\leq 0<A-2\sum_{j=1}^{j^*-1}{w}_{[j]}.
\end{align*}
Thus $0\in h(I_{[j^*]})=\left[A-2\sum_{j=1}^{j^*}{w}_{[j]},A-2\sum_{j=1}^{j^*-1}{w}_{[j]}\right]$, and $I_{[j^*]}$ is an optimal solution. In addition, we have that $h(c)>0$ when $c<I_{[j^*]}$. Similarly, we can find $j_*$ such that
\begin{align*}
A-2\sum_{j=1}^{j_*+1}{w}_{[j]}<0\leq A-2\sum_{j=1}^{j_*}{w}_{[j]},
\end{align*}
and $I_{[j_*+1]}$ is an optimal solution. In addition, we have that $h(c)<0$ when $c>I_{[j_*+1]}$. Then $h(c)$ being non-increasing gives us that the set of optimal solutions for~\eqref{lemma_fcm} is $\left[I_{[j^*]},I_{[j_*+1]}\right]$. When $j^*=j_*+1$, the optimal solution is unique.\qed
\end{proof}

{\bf Remark:} When there are missing pixels in images, we can put a mask on the data fidelity term as in image inpainting problems~\cite{shen2002mathematical} or assign a value to each missing pixel. In~\cite{TVFCM}, the missing pixels are assigned with zero, and $\mathbf{C}$ changes based on the percentage of missing pixels. While, this lemma tells us that assigning 0 or 255 (for grayscale images) randomly to these missing pixels will not change the value of $c$ with a high probability because $c_i$ is a weighted median. Also, by assigning 0 or 255 randomly, this algorithm is able to deal with cases where more than half of the pixels are missing. See the numerical experiments in Section~\ref{sec:experiment}.

Assume that $I$ and $w_i^{k+1}$ are vectors in $\mathbb{R}^n$, where $n$ is the total number of pixels. We can reorder the components of $I$ and $w^{k+1}_i$ such that the monotonicity of $I$ in Lemma~\ref{lemma1} is satisfied. If there are multiple optimal solutions for $c_i$, we choose the smallest one. We denote this step as
$$
\vC^{k+1}=\psi(\vW^{k+1}).
$$

\subsection*{$\mathbf{U}$-subproblem}
The subproblem for $\mathbf{U}$ is equivalent to
\begin{align*}
\vU^{k+1}=\argmin\limits_{\mathbf{U}}&\sum_{i=1}^N\int_\Omega \left\|\nabla u_i(x)-d_i^{k+1}(x)+\frac{\lambda_{d_i}^k(x)}{r}\right\|^2dx\\
&+\int_\Omega \left|u_i(x)-w_i^{k+1}(x)+\frac{\lambda_{w_i}^k(x)}{r}\right|^2dx.
\end{align*}
It is separable for $u^{k+1}_i$, and, from the following first order optimality condition for each $u^{k+1}_i$:
\begin{align*}
\nabla^T\left(\nabla u^{k+1}_i(x)-d^{k+1}_i(x)+\frac{\lambda^k_{d_i}(x)}{r}\right)+\left(u^{k+1}_i(x)-w^{k+1}_i(x)+\frac{\lambda^k_{w_i}(x)}{r}\right)=0,
\end{align*}
we can derive the closed-form solution of $u^{k+1}_i$ from the equation:
\begin{equation*}
\left(\nabla^T\nabla+\vI\right)u^{k+1}_i(x) = \nabla^Td^{k+1}_i(x)+w^{k+1}_i(x)-\frac{\nabla^T\lambda^k_{d_i}(x)}{r}-\frac{\lambda^k_{w_i}(x)}{r}.
\end{equation*}
A diagonalization trick can be applied to find $u^{k+1}_i$ efficiently~\cite{wang2008new}. We denote this step as
\begin{align*}
\vU^{k+1}=\left(\nabla^T\nabla+\vI\right)^{-1}\left(\nabla^T\vD^{k+1}+\vW^{k+1}-\frac{\nabla^T\Lambda^k_{\vD}}{r}-\frac{\Lambda^k_{\vW}}{r}\right).
\end{align*}

{\bf Updating dual variables.}  We denote the steps as
\begin{align*}
\Lambda_{\mathbf{D}}^{k+1}=&\Lambda_{\mathbf{D}}^{k}+r\left(\nabla \mathbf{U}^{k+1}-\mathbf{D}^{k+1}\right),\\
\Lambda_{\mathbf{W}}^{k+1}=&\Lambda_{\mathbf{W}}^{k}+r\left(\mathbf{U}^{k+1}-\mathbf{W}^{k+1}\right).
\end{align*}

Combining all these steps together, we obtain the proposed L1 Fuzzy Segmentation algorithm (L1FS) in Algorithm 1 for solving~\eqref{TVFCML1}.

\begin{algorithm}
\begin{minipage}{12cm}
\begin{itemize}
\item
Initialization: $\mathbf{U}^{0}$ and $\mathbf{C}^0$ \mbox{are specified}, 
$\Lambda_{\mathbf{D}}^0=\mathbf{0}, \Lambda_{\mathbf{W}}^0=\mathbf{0}$.
\item
For $k=0,1,2,\cdots$, repeat until the stopping criterion is reached.
\begin{eqnarray*}
\mathbf{D}^{k+1}\ &=&\ \mathcal{S}\left(\nabla \mathbf{U}^{k}+\frac{\Lambda_{\mathbf{D}}^k}{r},\ \frac{1}{r}\right),\\
\mathbf{W}^{k+1}\ &=&\ \Pi_\Delta \left(\mathbf{U}^{k}+\frac{\Lambda_{\mathbf{W}}^k}{r}-\frac{\lambda|I-\mathbf{C}^{k}|}{r}\right),\\
\mathbf{C}^{k+1}\ &=&\ \psi(\vW^{k+1}),\\
\mathbf{U}^{k+1}\ &=&\ \left(\nabla^T\nabla+\vI\right)^{-1}\left(\nabla^T\mathbf{D}^{k+1}+\mathbf{W}^{k+1}
-\frac{\nabla^T\Lambda_\mathbf{{D}}^k}{r}-\frac{\Lambda_\mathbf{{W}}^k}{r}\right) ,\\
\Lambda_{\mathbf{D}}^{k+1}\ &=&\ \Lambda_{\mathbf{D}}^{k}+r\left(\nabla \mathbf{U}^{k+1}-\mathbf{D}^{k+1}\right),\\
\Lambda_{\mathbf{W}}^{k+1} \ &=&\ \Lambda_{\mathbf{W}}^{k}+r\left(\mathbf{U}^{k+1}-\mathbf{W}^{k+1}\right).
\end{eqnarray*}
\item
Output: $\mathbf{C}^{k+1}, \mathbf{U}^{k+1}$.
\end{itemize}
\end{minipage}\\
\caption{ The proposed L1FS algorithm}
\end{algorithm}

{\bf Remark:} Though there are four variables $ \vD$, $\vW$, $\vC$ and $\vU$, they can be grouped into two variables $(\vD,\vW)$ and $(\vC,\vU)$ and  the subproblems for these two variables can be decoupled into the four subproblems. So it is essentially a two block ADMM applied on a nonconvex optimization problem.

Because problem~\eqref{TVFCML1} is nonconvex, a good initial guess of $\mathbf{U}^{0}$ and $\mathbf{C}^0$,
which can be obtained from other methods using fuzzy membership functions, is helpful for the stability of L1FS.
Thus we update $\vD^1$ and $\vW^1$ first because both of them can use the initial guess of $\vU^{0}$.

\subsection{Convergence analysis}
If $\vC$ is given, problem~\eqref{TVFCML1} is convex. Then, the algorithm is a standard ADMM by considering $(\vD,\vW)$ together, and its convergence is guaranteed~\cite{ADMM}. In this section, we give a convergence result of Algorithm 1 for unknown $\vC$. To simplify notations, let us define the sextuples:
$$\mathbf{Z}:=(\mathbf{D,W,C,U},\Lambda_\mathbf{D},\Lambda_\mathbf{W}).$$
A point $\mathbf{Z}$ is a KKT point of (\ref{cons}) if it satisfies the following KKT conditions:
\begin{subequations}
\begin{align}\label{KKT1}
\partial \|d_i^*(x)\|-{\lambda_{d_i}^*(x)}&\ni 0,\\
\lambda |I-\mathbf{C}|-\Lambda_{\mathbf{W}}+\partial \delta_\Delta(\mathbf{W}) &\ni\mathbf{0},\\
\int_\Omega \partial |I(x)-c_i^*|w^*_i(x)dx & \ni 0,\\
\nabla^T\lambda_{d_i}(x)+\lambda_{w_i}(x)&=0,\\
\nabla u_i(x)-d_i(x) &=0,\\
\label{KKT2}u_i(x)-w_i(x) &=0.
\end{align}
\end{subequations}
where $\partial\|\cdot\|$ and $\partial \delta_\Delta(\cdot)$ are subdifferentials of $\|\cdot\|$ and $\delta_\Delta(\cdot)$, respectively.

\begin{theorem}[Convergence to stationary points of L1FS]\label{conKKT}
Let $\left\{\mathbf{Z}^k\right\}_{k=1}^\infty$ be a sequence generated by
Algorithm 1 that satisfies the condition
\begin{equation*}
\lim\limits_{k\rightarrow\infty}\left(\mathbf{Z}^{k+1}-\mathbf{Z}^k\right)=\mathbf{0}.
\end{equation*}
Then any accumulation point of $\left\{\mathbf{Z}^k\right\}_{k=1}^\infty$ is a KKT point of problem (\ref{cons}). Consequently,
any accumulation point of $\left(\mathbf{C}^k,\mathbf{U}^k\right)$ is a stationary point of problem (\ref{TVFCML1}).
\end{theorem}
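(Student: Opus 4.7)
The plan is to fix a convergent subsequence $\mathbf{Z}^{k_j}\to \mathbf{Z}^*$ and verify the six KKT conditions (\ref{KKT1})--(\ref{KKT2}) one at a time, exploiting that, by hypothesis, $\mathbf{Z}^{k_j+1}\to \mathbf{Z}^*$ as well. The two primal-feasibility conditions (e) and (f) come for free: the dual updates in Algorithm~1 give
$\nabla u_i^{k+1}-d_i^{k+1}=(\lambda_{d_i}^{k+1}-\lambda_{d_i}^{k})/r$ and
$u_i^{k+1}-w_i^{k+1}=(\lambda_{w_i}^{k+1}-\lambda_{w_i}^{k})/r$, and both right-hand sides tend to zero by the assumption $\mathbf{Z}^{k+1}-\mathbf{Z}^k\to\mathbf{0}$. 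The adjoint condition (d) will be obtained by writing out the first-order optimality of the $\mathbf{U}$-subproblem and using the same two dual updates to replace $r(\nabla u_i^{k+1}-d_i^{k+1})$ and $r(u_i^{k+1}-w_i^{k+1})$ by $\lambda_{d_i}^{k+1}-\lambda_{d_i}^{k}$ and $\lambda_{w_i}^{k+1}-\lambda_{w_i}^{k}$ respectively; after cancellation the identity $\nabla^{T}\lambda_{d_i}^{k+1}+\lambda_{w_i}^{k+1}=0$ drops out for every $k$, so the limit inherits it.

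For the subdifferential inclusions (a) and (b), I would start from the exact optimality condition of each primal subproblem and then ``upgrade'' $\Lambda^{k}$ to $\Lambda^{k+1}$ using the dual-update identities. For the $\mathbf{D}$-step this turns the raw condition $0\in\partial\|d_i^{k+1}\|+r(d_i^{k+1}-\nabla u_i^{k}-\lambda_{d_i}^{k}/r)$ into
\[
\lambda_{d_i}^{k+1}+r(\nabla u_i^{k}-\nabla u_i^{k+1})\in \partial\|d_i^{k+1}\|,
\]
and for the $\mathbf{W}$-step it turns the raw condition into
\[
0\in \lambda|I-\mathbf{C}^{k}|-\Lambda_{\mathbf{W}}^{k+1}-r(\mathbf{U}^{k}-\mathbf{U}^{k+1})+\partial\delta_{\Delta}(\mathbf{W}^{k+1}).
\]
Since $\nabla u_i^{k}-\nabla u_i^{k+1}\to 0$ and $\mathbf{U}^{k}-\mathbf{U}^{k+1}\to 0$, the perturbations vanish in the limit and the inclusions (a), (b) follow from the closedness of the graphs of $\partial\|\cdot\|$ and $\partial\delta_{\Delta}$ (both are maximal monotone on their respective spaces).

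Condition (c) is the optimality of the $\mathbf{C}$-update itself: by construction $c_i^{k+1}$ minimizes $\int|I-c_i|w_i^{k+1}\,dx$, which is exactly $0\in\int_\Omega\partial|I(x)-c_i^{k+1}|w_i^{k+1}(x)\,dx$. Passing to the limit here is the delicate step, because we must move a subdifferential inclusion through an integral while $c_i^{k+1}\to c_i^*$ and $w_i^{k+1}\to w_i^*$. My plan is to represent the inclusion by selections $\xi_i^{k+1}(x)\in\partial|I(x)-c_i^{k+1}|$ with $\int \xi_i^{k+1}w_i^{k+1}\,dx=0$, bound $|\xi_i^{k+1}|\le 1$, extract a weak-$*$ convergent subsequence $\xi_i^{k+1}\rightharpoonup \xi_i^*$, and use the graph closedness of $\partial|\cdot|$ together with the $L^1$ convergence of $w_i^{k+1}$ to pass to the limit; this is the only genuinely technical piece of the argument, and I expect it to be the main obstacle. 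Once all six conditions are verified, $\mathbf{Z}^*$ is a KKT point of (\ref{cons}); eliminating the auxiliary variables $\mathbf{D}^*=\nabla\mathbf{U}^*$ and $\mathbf{W}^*=\mathbf{U}^*$ by means of (e)--(f) collapses the KKT system to the first-order optimality conditions of the original problem (\ref{TVFCML1}), so $(\mathbf{C}^*,\mathbf{U}^*)$ is a stationary point, completing the proof.
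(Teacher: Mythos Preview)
Your approach is essentially the same as the paper's: write out the optimality condition of each subproblem, use the dual updates to rewrite residuals as multiplier increments, and pass to the limit along a convergent subsequence using $\mathbf{Z}^{k+1}-\mathbf{Z}^k\to\mathbf{0}$. Two minor differences are worth noting. Your observation that the $\mathbf{U}$-step optimality together with the two dual updates yields $\nabla^{T}\lambda_{d_i}^{k+1}+\lambda_{w_i}^{k+1}=0$ \emph{for every $k$} is slightly sharper than the paper, which only recovers condition~(d) in the limit. Conversely, you overwork condition~(c): since $\mathbf{C}^{k+1}$ is by construction an exact minimizer of $c\mapsto\int_\Omega|I-c|\,w_i^{k+1}\,dx$ and this objective is jointly continuous in $(c,w)$, a one-line argument (for any competitor $c$, $\int|I-c_i^*|w_i^*=\lim\int|I-c_i^{k+1}|w_i^{k+1}\le\lim\int|I-c|w_i^{k+1}=\int|I-c|w_i^*$) shows $c_i^*$ is a minimizer for $w_i^*$, hence satisfies the first-order inclusion; no weak-$*$ extraction of subgradient selections is needed. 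The paper handles this step by passing to the limit in $\mathbf{C}^{k+1}=\psi(\mathbf{W}^{k+1})$ directly.
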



\begin{proof}
From the updating formulas in Algorithm 1, we obtain the following inequalities related to the subsequent iteration:
\begin{subequations}\label{difall}
\begin{align}\label{difU}
\mathbf{D}^{k+1}-\mathbf{D}^k &= \mathcal{S}\left(\nabla \mathbf{U}^{k}+\frac{\Lambda_{\mathbf{D}}^{k}}{r},\ \frac{1}{r}\right)-\mathbf{D}^k,\\
\mathbf{W}^{k+1}-\mathbf{W}^k &= \Pi_\Delta \left(\mathbf{U}^{k}+\frac{\Lambda_{\mathbf{W}}^{k}}{r}-\frac{\lambda|I-\mathbf{C}^{k}|}{r}\right)-\mathbf{W}^k,\\
\mathbf{C}^{k+1}-\mathbf{C}^k &= \psi(\vW^{k+1})-\mathbf{C}^k,\\
\left(\nabla^T\nabla+I\right)\left(\vU^{k+1}-\vU^{k}\right) &= \nabla^T(\vD^{k+1}-\nabla \vU^k)+(\vW^{k+1}-\vU^k)\\
&\ \ \ \  -\frac{\nabla^T\Lambda_{\vD}^k}{r}-\frac{\Lambda_{\vW}^k}{r},\\
\Lambda_{\mathbf{D}}^{k+1}-\Lambda_{\mathbf{D}}^k &= r\left(\nabla\mathbf{U}^{k+1}-\mathbf{D}^{k+1}\right),\\
\label{diflW}\Lambda_{\mathbf{W}}^{k+1}-\Lambda_{\mathbf{W}}^k &= r\left(\mathbf{U}^{k+1}-\mathbf{W}^{k+1}\right).
\end{align}
\end{subequations}

By the assumption $\lim\limits_{k\rightarrow\infty}\left(\mathbf{Z}^{k+1}-\mathbf{Z}^k\right)=\mathbf{0}$,
the left-hand side and  right-hand side of each equality in~\eqref{difall} go to zero as $k\rightarrow\infty$.
Then we have
\begin{subequations}\label{limall}
\begin{align}\label{lim1}
\mathcal{S}\left(\nabla \mathbf{U}^{k}+\frac{\Lambda_{\mathbf{D}}^{k}}{r},\ \frac{1}{r}\right)-\mathbf{D}^k &\rightarrow \mathbf{0},\\
\Pi_\Delta \left(\mathbf{U}^{k}+\frac{\Lambda_{\mathbf{W}}^{k}}{r}-\frac{\lambda|I-\mathbf{C}^{k}|}{r}\right)-\mathbf{W}^k &\rightarrow \mathbf{0},\\
\psi(\mathbf{W}^{k+1})-\mathbf{C}^k& \rightarrow \mathbf{0},\\
\nabla^T(\vD^{k+1}-\nabla \vU^k)+(\vW^{k+1}-\vU^k)-\frac{\nabla^T\Lambda_{\vD}^k}{r}-\frac{\Lambda_{\vW}^k}{r} &\rightarrow \mathbf{0},\\
\nabla \vU^{k+1}-\vD^{k+1} &\rightarrow \mathbf{0},\\
\label{lim2}\mathbf{U}^{k+1}-\mathbf{W}^{k+1} &\rightarrow \mathbf{0}.
\end{align}
\end{subequations}

Assume  $\mathbf{Z}^*=(\mathbf{D^*,W^*,C^*,U^*},\Lambda_\mathbf{D}^*,\Lambda_\mathbf{W}^*)$ is an accumulation point of $\mathbf{Z}^k$. \eqref{limall} gives us
\begin{subequations}
\begin{align}
\label{limm1}\mathcal{S}\left(\nabla \mathbf{U}^{*}+\frac{\Lambda_{\mathbf{D}}^{*}}{r},\ \frac{1}{r}\right)-\mathbf{D}^*&= \mathbf{0},\\
\label{limm2}\Pi_\Delta \left(\mathbf{U}^{*}+\frac{\Lambda_{\mathbf{W}}^{*}}{r}-\frac{\lambda|I-\mathbf{C}^{*}|}{r}\right)-\mathbf{W}^*&= \mathbf{0},\\
\label{limm3}\psi(\mathbf{W}^{*})-\mathbf{C}^* &= \mathbf{0},\\
\label{limm4}\nabla^T\Lambda_{\vD}^*+\Lambda_{\vW}^* &= \mathbf{0},\\
\label{limm5}\nabla\mathbf{U}^{*}-\mathbf{D}^{*} &= \mathbf{0},\\
\label{limm6}\mathbf{U}^{*}-\mathbf{W}^{*} &= \mathbf{0}.
\end{align}
\end{subequations}
By (\ref{limm1}), it follows that $\mathbf{D}^*$ is a solution of the minimization problem
\begin{align*}
\min_{\mathbf{D}}\sum_{i=1}^N\left\{\int_\Omega\|d_i(x)\|dx+\frac{r}{2}\int_\Omega \left\|d_i(x)-\nabla
u^{*}_i(x)-\frac{\lambda^*_{d_i}(x)}{r}\right\|^2dx\right\}.
\end{align*}
Thus $\vD^*$ satisfies the first order optimality condition
\begin{equation*}
0\in \partial \|d^*_i(x)\|+r\left(d_i^*(x)-\nabla u_i^*(x)-\frac{\lambda_{d_i}^*(x)}{r}\right).
\end{equation*}
Using (\ref{limm5}), we simplify the above condition as
\begin{equation}\label{limmm3}
0\in \partial \|d_i^*(x)\|-{\lambda_{d_i}^*(x)}.
\end{equation}
By (\ref{limm2}), $\mathbf{W}^*$ is a solution of the following minimization problem:
\begin{align*}
\min\limits_{\mathbf{W}}\sum_{i=1}^N\left\{\frac{r}{2}\int_\Omega \left|w_i(x)-u^{*}_i(x)-\frac{\lambda^*_{w_i}(x)}{r}+\frac{\lambda|I(x) - c^{*}_i|}{r}\right|^2dx\right\}+\delta_\Delta(\mathbf{W}).
\end{align*}
Hence $\mathbf{W}^*$ satisfies the optimality condition
$$
\mathbf{0}\in \partial \delta_\Delta(\mathbf{W}^*)+r(\mathbf{W}^*-\mathbf{U}^*)-\Lambda_\mathbf{W}^*+\lambda|I-\mathbf{C}^*|.
$$
Together with the equality in (\ref{limm6}), the above equation can be simplified as
\begin{equation}\label{limmm4}
\mathbf{0}\in \partial \delta_\Delta(\mathbf{W}^*)-\Lambda_\mathbf{W}^*+\lambda|I-\mathbf{C}^*|.
\end{equation}
The equation (\ref{limm3}) implies that $\mathbf{C}^*$ is a solution of the minimization problem
\begin{equation*}
\min_{\mathbf{C}}\sum_{i=1}^N\int_\Omega \left|I(x) - c_i\right|w^*_i(x) \ dx,
\end{equation*}
and the optimal condition is
\begin{equation}\label{limmm2}
0\in \int_\Omega \partial |I(x)-c_i^*|w^*_i(x)dx.
\end{equation}

The equivalence of equations (\ref{limm1})-(\ref{limm3}) with (\ref{limmm3})-(\ref{limmm2}), together with
equations (\ref{limm4})-(\ref{limm6}) shows that the accumulation point
$\mathbf{Z}^*$ satisfies the KKT condition in equations (\ref{KKT1})-(\ref{KKT2}), thus $\mathbf{Z}^*$ is a KKT point
of problem (\ref{cons}).

Since problem (\ref{cons}) and problem (\ref{TVFCML1}) are equivalent, the convergence of $\left(\mathbf{C}^k,\mathbf{U}^k\right)$ in the statement follows directly.\qed\end{proof}

The convergence analysis is motivated by~\cite{zhang2010alternating}. This theorem implies that whenever $\left\{\mathbf{Z}^k\right\}_{k=1}^\infty$ converges, it converges to a KKT point of problem (\ref{cons}). However, since the minimization problem (\ref{cons}) is nonconvex, the KKT point is not necessary to be an optimal solution.

\section{Experimental results}\label{sec:experiment}
In order to demonstrate the effectiveness of the proposed method, we compare our method with some existing methods on both synthetic and real images. These methods (FCM, FCM\_S2, FLICM, L2FS, L2L0, and L1SS) are discussed in Section~\ref{sec:preliminary}. Since the segmentation models in these methods are not jointly convex, and they are easy to get stuck in local minima, ``good'' initialization is crucial for all algorithms, especially when the given image is noisy. For FCM, FCM\_S2, and FLICM, the initial $\mathbf{U}$ is uniformly distributed in $[0,1]$ and normalized to satisfy the sum-to-one constraint. While for TV based methods L2FS and L1FS, one can also use the results of FCM and FCM\_S2 as the initial $\mathbf{U}$ and $\mathbf{C}$. Here we consider three ways for choosing the initial $\mathbf{U}$ and $\mathbf{C}$: the result of FCM, the result of FCM\_S2, and $\mathbf{U}$  as functions uniformly distributed in $[0,1]$ and $\mathbf{C}$ as the result of FCM. In all the experiments, we choose the one with the highest performance among all the three initializations. For L1SS, we use the initialization method as described in the original paper. The stopping criterion, which is the same for all the methods except L1SS, is defined as
\begin{equation*}\label{eps}
\frac{\|\mathbf{U}^{k+1}-\mathbf{U}^k\|_2}{\|\mathbf{U}^k\|_2}<\epsilon
\end{equation*}
where $\epsilon$ is a very small number. For L1SS, this stopping criterion does not work since L1SS leads to contrast loss due to the error in calculating class centers $\{c_i\}_{i=1}^N$ at early iterations. However, the contrast of L1SS will be enhanced gradually if the number of iterations increases. To gain satisfactory results, we choose to stop L1SS by setting the maximum number of iterations to be 1000. 

The parameters of the methods being compared are set as follows. In FCM, we set $p=2$. In FCM\_S2, we set $p=2,\alpha=5$, and the window size for the median filter as $5\times 5$. In FLICM, we set $p=2$ and the window size of the neighborhoods as $3\times3$ or $5\times5$, which depends on the noise level.
However, the weight parameter $\lambda$ for L2FS, L1SS, and L1FS are tuned for each experiment to achieve optimal results. In all experiments, the range of $\lambda$ for L2FS is $[0.00005, 0.0005]$, for L1SS is $[0.03, 1]$, and for L1FS is $[0.001, 0.05]$.
For all methods, $\epsilon=10^{-6}$ for the two-phase segmentation and $\epsilon=10^{-4}$ for the multiphase segmentation. We use the default parameters in L2L0 except that we set $\lambda=10$.

The clustering results of all methods are obtained by checking the maximum value of their membership functions. Then we display the recovered piecewise constant image as the final result. To compare segmentation results quantitatively, we consider the Segmentation Accuracy (SA) defined by
$$
SA = \frac{\# \mbox{correctly classified pixels}}{\# \mbox{all pixels}}.
$$

\begin{figure}[!htbp]
\begin{center}
\subfloat[]{\includegraphics[width=1.893cm]{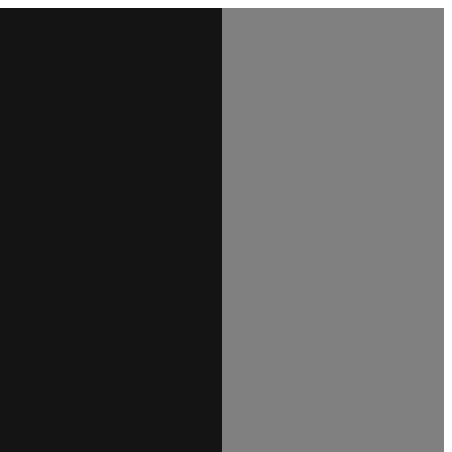}\label{fig:1a}}\
\subfloat[]{\includegraphics[width=1.893cm]{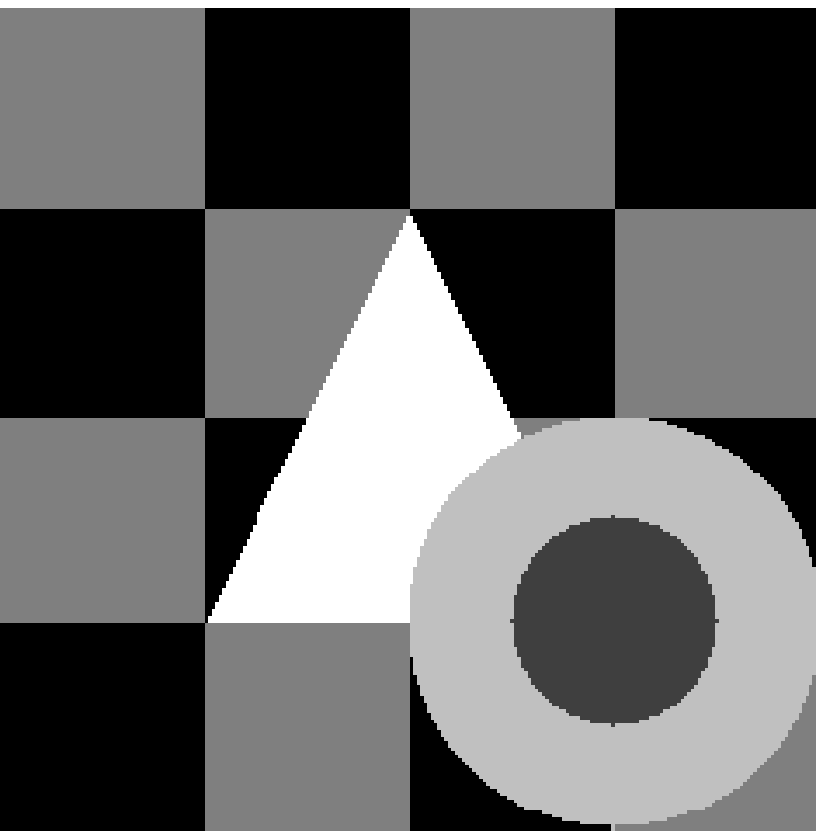}\label{fig:1b}}\
\subfloat[]{\includegraphics[width=1.893cm]{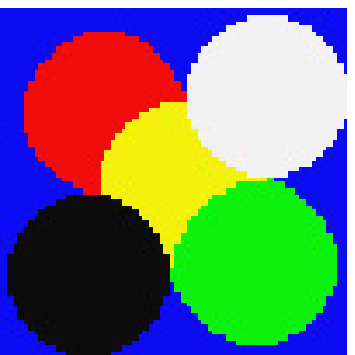}\label{fig:1c}}
\end{center}
\caption{Synthetic piecewise constant test images. (a) A two-phase grayscale image with intensities 20 and 128, size $128\times 128$;
(b) A five-phase grayscale image with intensities 0, 63, 127, 192, and 255, size $235\times237$; (c) A six-phase color image with color vectors (12 11 242), (242 12 11), (242 241 242), (243 241 12), (12 12 12), and (12 242 12), size $100\times100$. }
\label{fig:1}
\end{figure}

The synthetic piecewise constant test images are displayed in Fig.~\ref{fig:1}.
The noisy images are contaminated by three types of noise: Gaussian Noise (GN) with zero mean and standard deviation varying from 10 to 80, Salt-and-Pepper Impulse Noise (SPIN) with 10\% to 60\% pixels contaminated, and Random-Valued Impulse Noise (RVIN) with 10\% and 60\% pixels contaminated.


\subsection{Test on Fig.~\ref{fig:1a}}
\begin{table}[htbp]\centering\scriptsize{
 \begin{threeparttable}
 \caption{ SA performance of different methods applied on Fig.~\ref{fig:1a} contaminated by different levels of GN, SPIN, and RVIN.} \label{tab:1}
\begin{tabular}{lcccccccc} \toprule GN ($\sigma$) & 10 & 20&30 & 40 & 50&60&70&80\\
\midrule FCM& \textbf{1}&0.9970 &0.9642 &0.9146 &0.8627 &0.8162&0.7811&0.7524\\
FLICM & \textbf{1}&0.9999&0.9996&0.9990&0.9975&0.9968&0.9960&0.9954\\
L2FS&\textbf{1}&\textbf{1}&\textbf{1}&\textbf{1}&\textbf{1}&\textbf{1}&0.9999&\textbf{0.9998}\\
L1SS&\textbf{1}&\textbf{1}&\textbf{1}&0.9998&0.9996&0.9984&0.9975&0.9965\\
L1FS &\textbf{1}&\textbf{1}&\textbf{1}&\textbf{1}&\textbf{1}&\textbf{1}&\textbf{1}&\textbf{0.9998}\\
\midrule SPIN (\%) & 10\% & 20\%&30\% & 40\% & 50\%&60\%&&\\
\midrule FCM& 0.9480&0.8983&0.8478&0.7979&0.7486&0.6980&&\\
FLICM & 0.9984&0.9921&0.9738&0.8002&0.7313&0.6554&&\\
L2FS&0.9999&0.9998&0.9982&0.9983&-&-&&\\
L1SS&0.9998&0.9990&0.9977&0.9967&0.9956&0.9953&&\\
L1FS &\textbf{1}&\textbf{1}&\textbf{1}&\textbf{1}&\textbf{1}&\textbf{0.9995}&&\\
\midrule RVIN (\%) & 10\% & 20\%&30\% & 40\% & 50\%&60\%&&\\
\midrule FCM\_S2& 0.9985&0.9972&0.9945&0.9862&0.9630&0.9042&&\\
FLICM & 0.9987&0.9985&0.9970&0.9958&0.9948&0.9919&&\\
L2FS&\textbf{1}&\textbf{1}&\textbf{1}&0.9998&0.9995&0.9974&&\\
L1SS&1&0.9995&0.9985&0.9979&0.9966&0.9891&&\\
L1FS &\textbf{1}&\textbf{1}&\textbf{1}&\textbf{1}&\textbf{1}&\textbf{0.9976}&&\\
\bottomrule
\end{tabular}
\end{threeparttable}}
\end{table}

\begin{figure}[!htbp]
\begin{center}
\subfloat[{GN $\sigma$=30}]{\includegraphics[width=1.893cm]{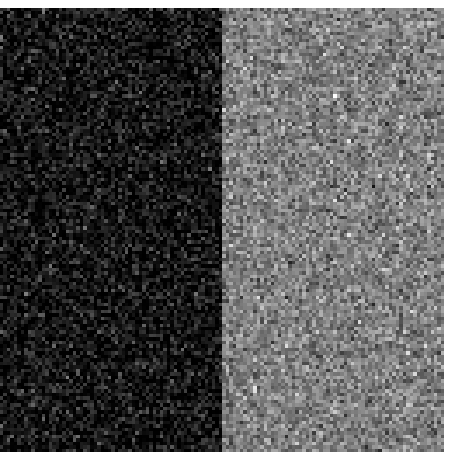}\label{fig:2a}}\
\subfloat[{0.9642}]{\includegraphics[width=1.893cm]{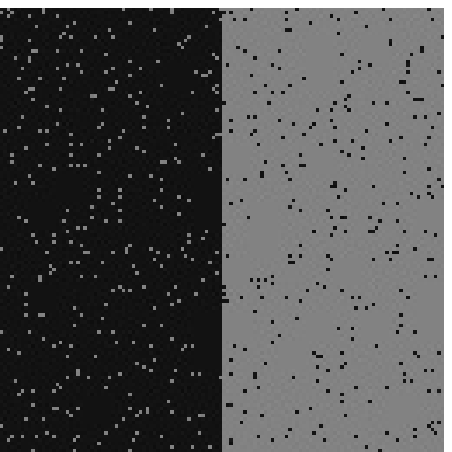}\label{fig:2b}}\
\subfloat[{0.9996}]{\includegraphics[width=1.893cm]{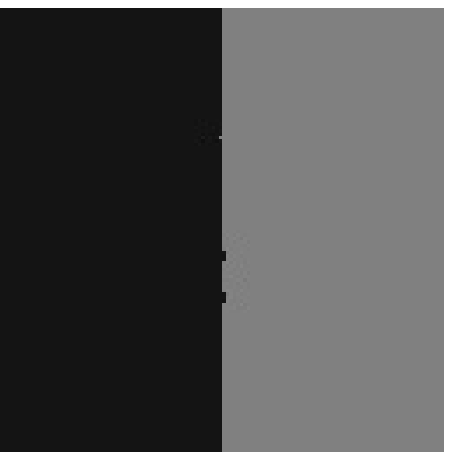}}\label{fig:2c}\
\subfloat[{1}]{\includegraphics[width=1.893cm]{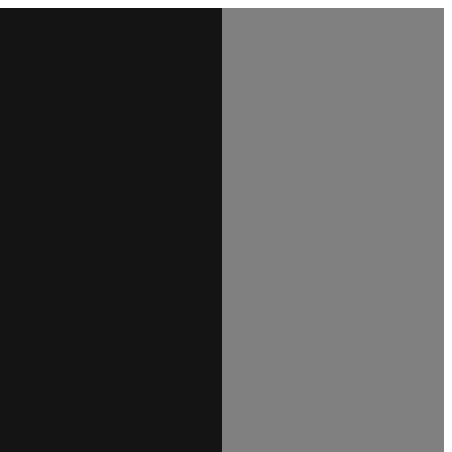}\label{fig:2d}}\
\subfloat[{1}]{\includegraphics[width=1.893cm]{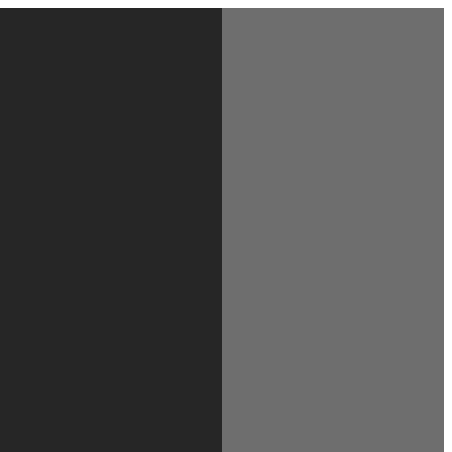}\label{fig:2e}}\
\subfloat[{1}]{\includegraphics[width=1.893cm]{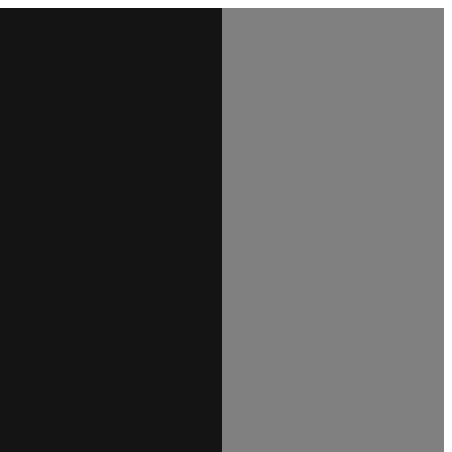}\label{fig:2f}}\\
\subfloat[{GN $\sigma$=60}]{\includegraphics[width=1.893cm]{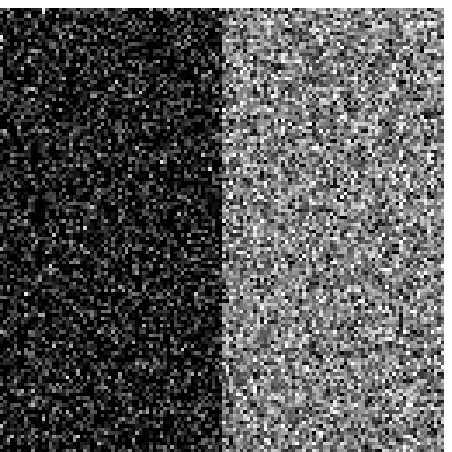}\label{fig:2g}}\
\subfloat[{0.8162}]{\includegraphics[width=1.893cm]{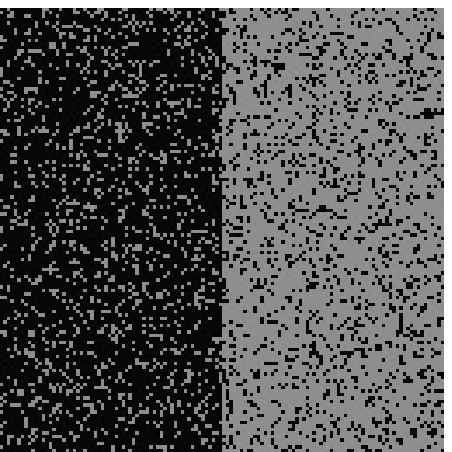}\label{fig:2h}}\
\subfloat[{0.9968}]{\includegraphics[width=1.893cm]{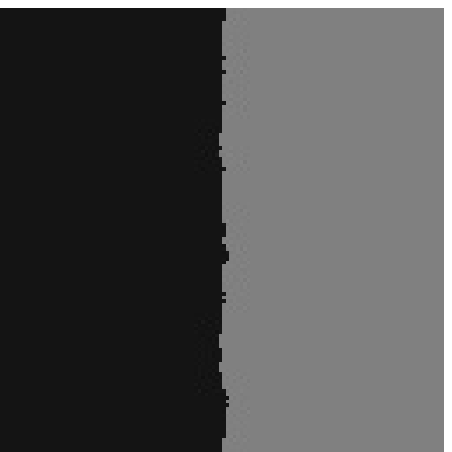}\label{fig:2i}}\
\subfloat[{1}]{\includegraphics[width=1.893cm]{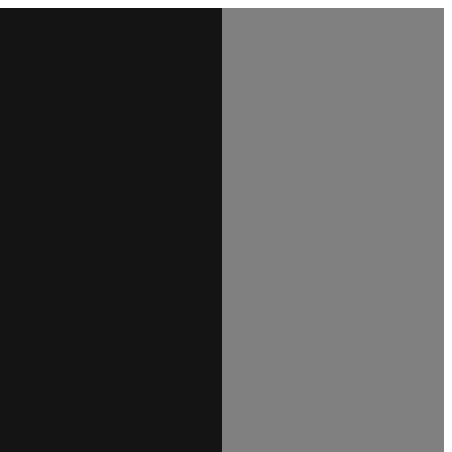}\label{fig:2j}}\
\subfloat[{0.9984}]{\includegraphics[width=1.893cm]{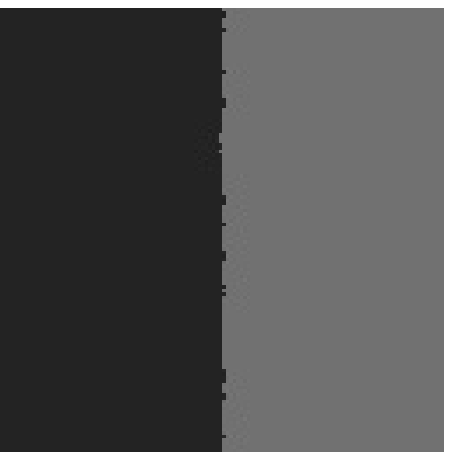}\label{fig:2k}}\
\subfloat[{1}]{\includegraphics[width=1.893cm]{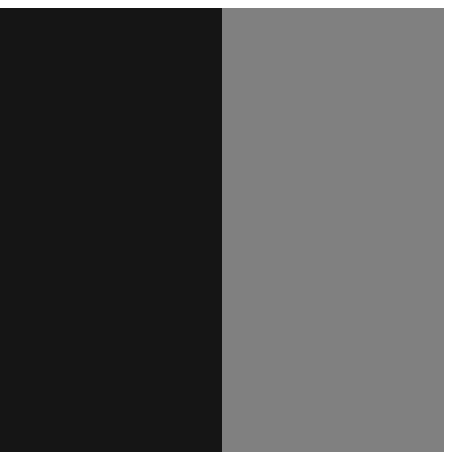}\label{fig:2l}}
\end{center}
\caption{Two-phase segmentation on the synthetic image Fig.~\ref{fig:1a} with Gaussian noise.
First column: images contaminated by Gaussian noise with standard deviations 30 and 60, respectively; Second column to last column: results of FCM, FLICM, L2FS, L1SS, and L1FS, respectively. The SA values are reported below each segmentation result.}\label{fig:2}
\end{figure}

\begin{figure}[!htbp]
\begin{center}
\subfloat[{SPIN 20\%} ]{\includegraphics[width=1.893cm]{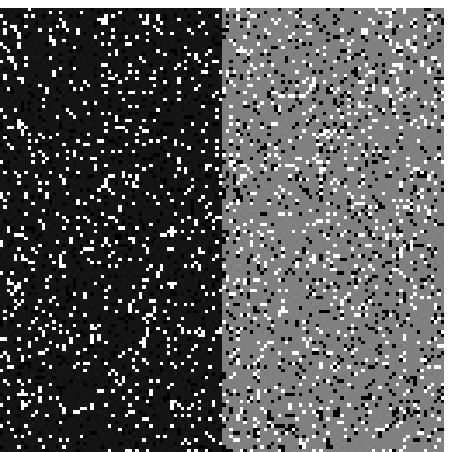}\label{fig:3a}}\
\subfloat[{0.8983}]{\includegraphics[width=1.893cm]{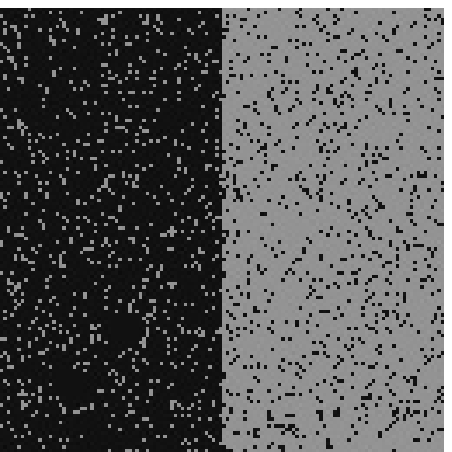}\label{fig:3b}}\
\subfloat[{0.9921}]{\includegraphics[width=1.893cm]{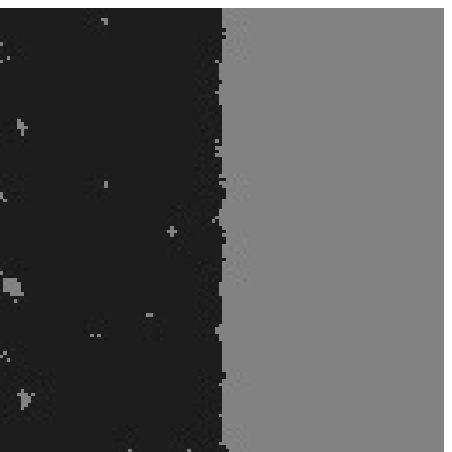}\label{fig:3c}}\
\subfloat[{0.9998}]{\includegraphics[width=1.893cm]{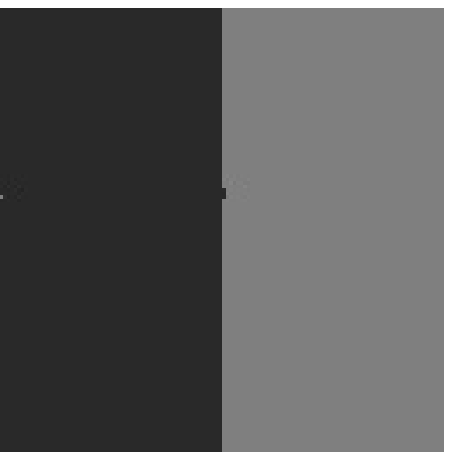}\label{fig:3d}}\
\subfloat[{0.9990}]{\includegraphics[width=1.893cm]{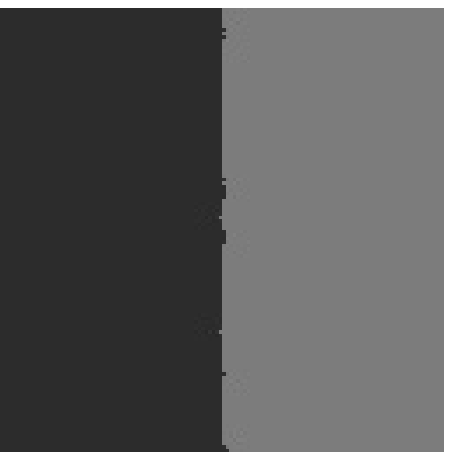}\label{fig:3e}}\
\subfloat[{1}]{\includegraphics[width=1.893cm]{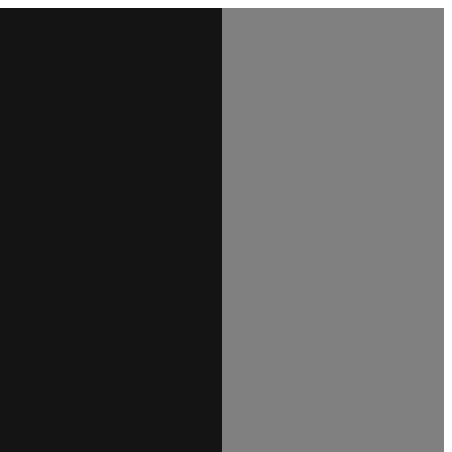}\label{fig:3f}}\\
\subfloat[{SPIN 40\%}]{\includegraphics[width=1.893cm]{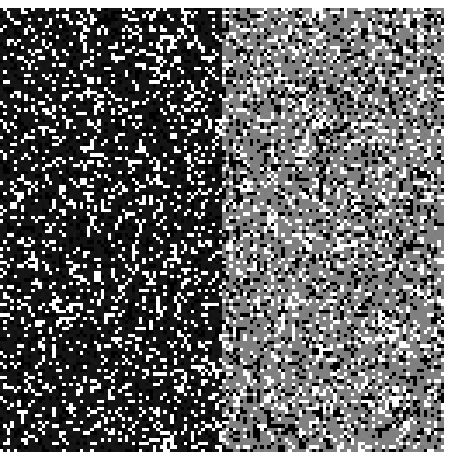}\label{fig:3g}}\
\subfloat[{0.7979}]{\includegraphics[width=1.893cm]{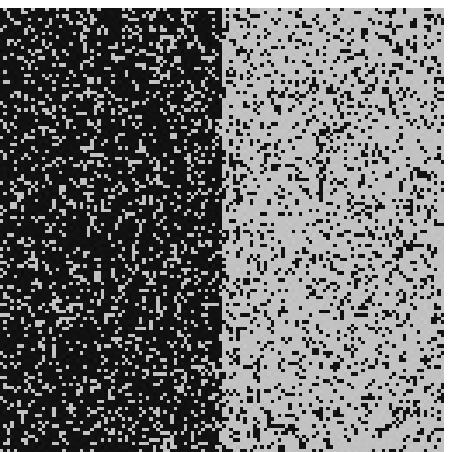}\label{fig:3h}}\
\subfloat[{0.8002}]{\includegraphics[width=1.893cm]{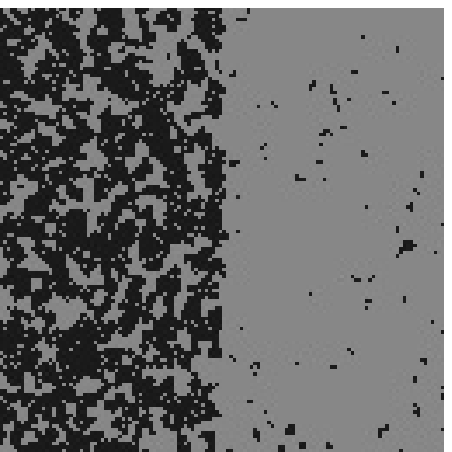}\label{fig:3i}}\
\subfloat[{0.9983}]{\includegraphics[width=1.893cm]{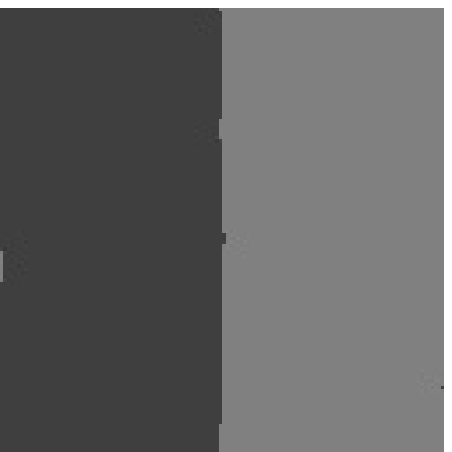}\label{fig:3j}}\
\subfloat[\small{0.9967}]{\includegraphics[width=1.893cm]{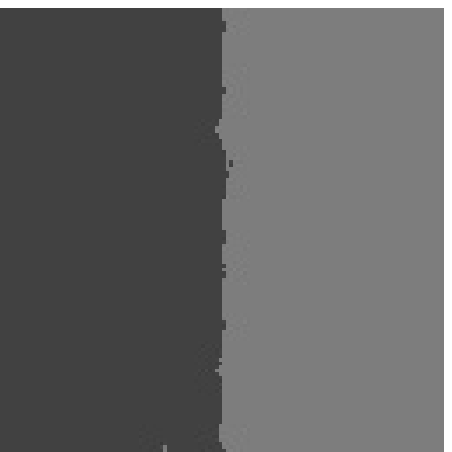}\label{fig:3k}}\
\subfloat[\small{1}]{\includegraphics[width=1.893cm]{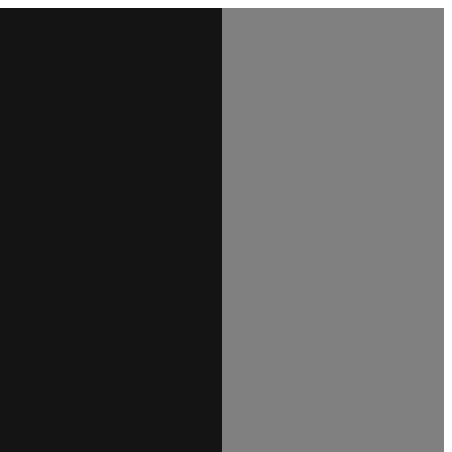}\label{fig:3l}}
\end{center}
\caption{Two-phase segmentation on the synthetic image Fig.~\ref{fig:1a} with SPIN.
First column: images contaminated by 20\% and 40\% SPIN, respectively; Second column to last column: results of FCM, FLICM, L2FS, L1SS,
and L1FS, respectively. The SA values are reported below each segmentation result.}\label{fig:3}
\end{figure}

\begin{figure}[!htbp]
\begin{center}
\subfloat[{RVIN 20\% }]{\includegraphics[width=1.893cm]{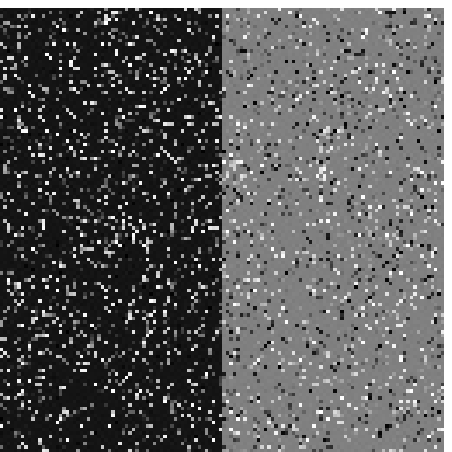}\label{fig:4a}}\
\subfloat[{0.9972}]{\includegraphics[width=1.893cm]{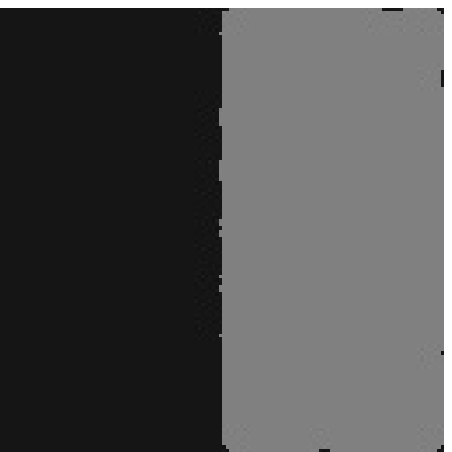}\label{fig:4b}}\
\subfloat[{0.9985}]{\includegraphics[width=1.893cm]{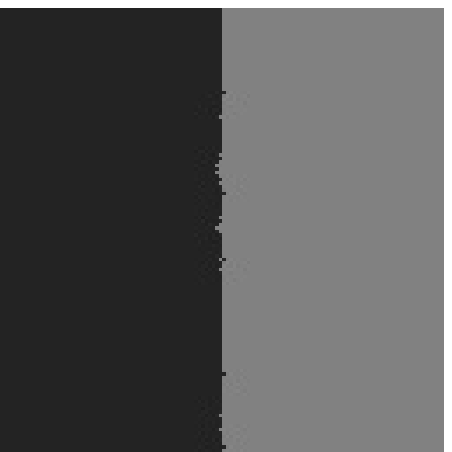}\label{fig:4c}}\
\subfloat[{1}]{\includegraphics[width=1.893cm]{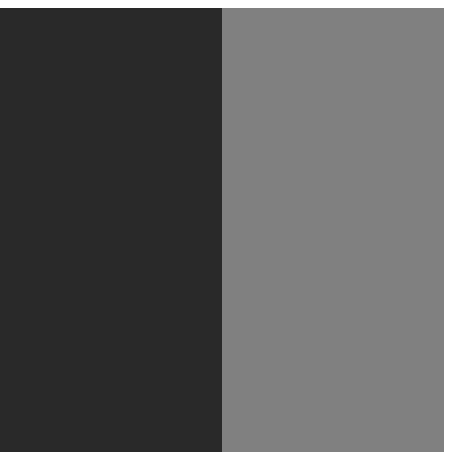}\label{fig:4d}}\
\subfloat[{0.9995}]{\includegraphics[width=1.893cm]{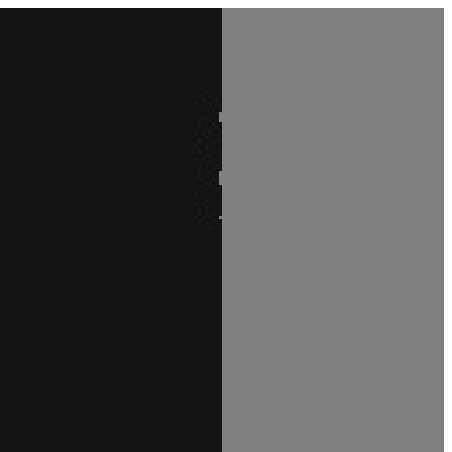}\label{fig:4e}}\
\subfloat[{1}]{\includegraphics[width=1.893cm]{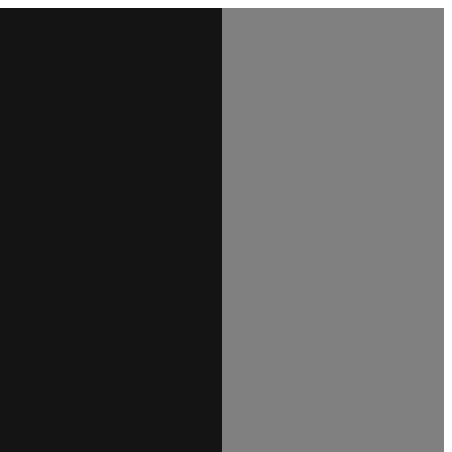}\label{fig:4f}}\\
\subfloat[{RVIN 40\%}]{\includegraphics[width=1.893cm]{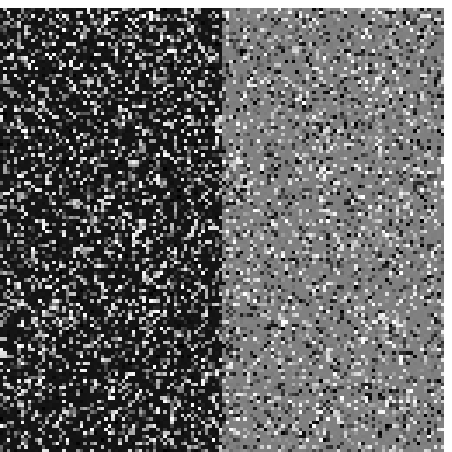}\label{fig:4g}}\
\subfloat[{0.9862}]{\includegraphics[width=1.893cm]{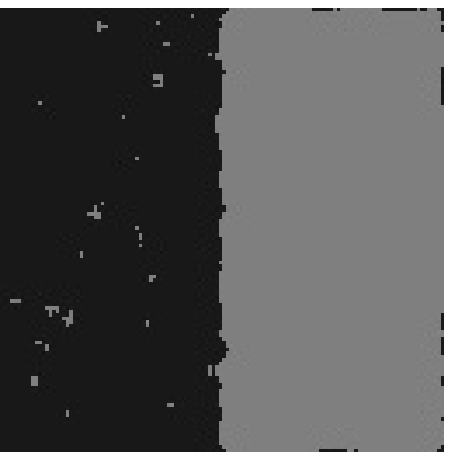}\label{fig:4h}}\
\subfloat[{0.9958}]{\includegraphics[width=1.893cm]{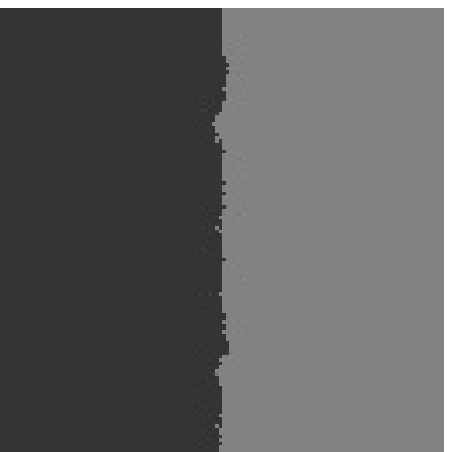}\label{fig:4i}}\
\subfloat[{0.9998}]{\includegraphics[width=1.893cm]{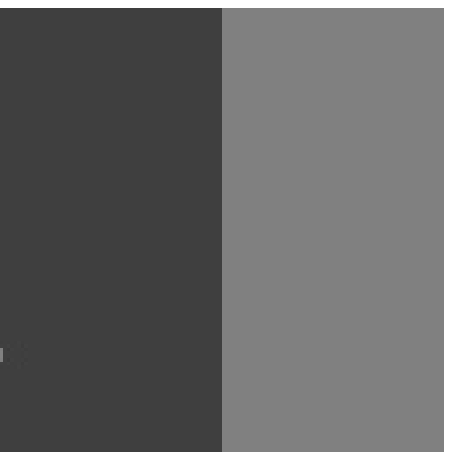}\label{fig:4j}}\
\subfloat[{0.9979}]{\includegraphics[width=1.893cm]{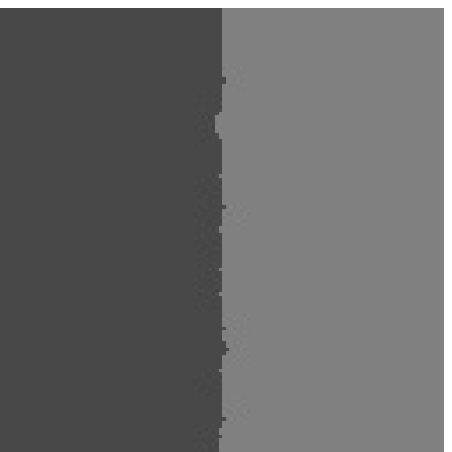}\label{fig:4k}}\
\subfloat[{1}]{\includegraphics[width=1.893cm]{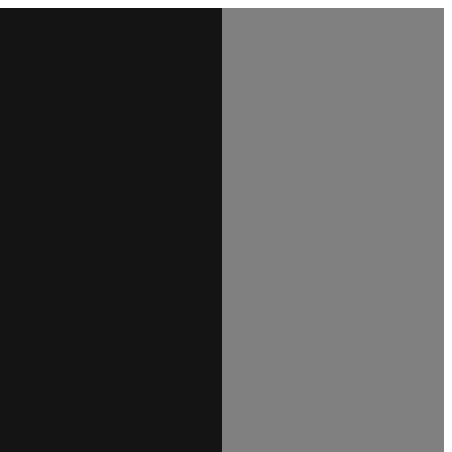}\label{fig:4l}}
\end{center}
\caption{Two-phase segmentation on the synthetic image Fig.~\ref{fig:1a} with RVIN. First column: images contaminated by 20\% and 40\%
RVIN, respectively; Second column to last column: results of FCM\_S2, FLICM, L2FS, L1SS, and L1FS, respectively. The SA values are reported below each segmentation result.}\label{fig:4}
\end{figure}

Tab.~\ref{tab:1} provides the SA comparison of these six algorithms, and Figs.~\ref{fig:2}-\ref{fig:4}
show some of the corresponding results.


For GN, all methods being tested give correct segmentation results for small standard deviations (e.g., $\sigma=10$). As the standard deviation increases, the SA value of FCM decays very fast. All the other algorithms have very large SA values even when the standard deviation is $80$. L1FS has the best performance for all cases, and it is able to give correct segmentation results even when $\sigma\le70$. L2FS is the second best algorithm, and it is able to give correct segmentation results when $\sigma\le 60$.


The results of all methods when $\sigma=30$ and $60$ are displayed in Fig.~\ref{fig:2}.
The results of FCM are relatively ``noisy'' (the second column). For FLICM (the third column) and L1SS (the fifth column), the segmentation
error occurs on the middle edge.

For SPIN, Tab.~\ref{tab:1} shows that FLICM  performs much better than FCM
for noise levels 10\%-30\%.
However, if the noise level is higher than 30\%, both FCM and FLICM have very poor performance.
L1SS achieves much better performance than FLICM, even when the noise
level is higher than 30\%. L2FS is slightly better than L1SS. Meanwhile,
if the noise level is higher than 50\%, L1SS fails to give a satisfactory result.
L1FS has the highest SA among all methods. It gives completely correct segmentation results
for noise levels 10\%-50\%. Fig.~\ref{fig:3} shows the results of all methods for noise levels 20\% and 40\%,
respectively.
For L2FS and L1SS, the segmentation errors occur along the middle edge. We also observe that
for high noise levels such as 40\%, both L2FS and L1SS suffer from slight contrast loss,
e.g., Fig.~\ref{fig:3j} and Fig.~\ref{fig:3k}.
However, L1FS is still able to preserve contrast, e.g., Fig.~\ref{fig:3f} and Fig.~\ref{fig:3l}.

For RVIN, FCM\_S2 is used to initialize $\mathbf{U}$ and $\mathbf{C}$ for TV based methods L2FS and L1FS.
Tab.~\ref{tab:1} shows that FCM\_S2 has the worst performance among all. L1SS is much better than FCM\_S2 especially for high noise levels.
FLICM performs slightly better than FCM\_S2.  L1FS achieves the best performance which is slightly better than L2FS.
L2FS gives correct segmentation results at noise levels 10\%-30\%, while our method L1FS can give the correct segmentation results
at noise levels 10\%-50\%.
Fig.~\ref{fig:4} shows the results
for noise levels 20\% and 40\%. Again, we find that most of the errors occur
along the middle edge for FLICM and L1SS. Moreover, the results of L2FS in Fig.~\ref{fig:4j},
L1SS in Fig.~\ref{fig:4e} and Fig.~\ref{fig:4k} lose some contrast.

Next we analyze the contrast problem for TV based methods. For L2FS, the estimated intensity $c_i$ in each segmented region roughly equals the mean value of the intensities in that region. In the Gaussian noise case, the noise has zero mean and therefore it has almost no impact on the estimation of $c_i$. However, for both impulse noise cases, the noise has a significant influence on the estimation of $c_i$ by taking the average. More specifically, assuming that the impulse noise follows the uniform distribution, its impact on the estimation of $c_i$ is like this.
Given an image with intensity range $[0,255]$, for the region $\Omega_i$ with true intensity $\beta\leq 128$, if there are more noisy pixels with intensity greater than $\beta$ than those with intensity less than $\beta$, then $c_i\geq\beta$ after taking the average and vice versa. Hence, the range of the image will be shrunk by applying L2FS even when the segmentation is correct, and thereby the recovered image will suffer from contrast loss. Note that the contrast loss problem has also been reported for the TVL2 restoration model in impulse noise removal~\cite{chan2005salt}. For L1SS algorithm, one step of ADMM is used to solve the $\vC$-subproblem approximately. Thus $c_i$ is not accurate for the first few iterations. However, since L1SS uses the L1-norm fidelity, the loss of contrast becomes more and more subtle as the number of iterations increases. In L1FS, we solve the $\mathbf{C}$-subproblem exactly. Thus, L1FS can preserve contrast well in the segmentation process.

\begin{table}[htbp]\scriptsize{\centering
\begin{threeparttable}
\caption{\label{tabGNaaa} SA comparison of different methods
applied on Fig.~\ref{fig:1b} with different levels of GN, SPIN, and RVIN.} \label{tab:2}
\begin{tabular}{lcccccccc} \toprule GN ($\sigma$) & 10 & 20&30 & 40 & 50&60&70&80\\
\midrule FCM& 0.9987 &0.8191 &0.6634 &0.5849 &0.5233&0.4718&0.4319&0.4017\\
L2FS&\textbf{1}&\textbf{0.9999}&\textbf{0.9994}&{0.9978}&0.9959&\textbf{0.9950}&\textbf{0.9931}&\textbf{0.9918}\\
L1FS &\textbf{1}&\textbf{0.9999}&0.9993&\textbf{0.9980}&\textbf{0.9964}&\textbf{0.9950}&\textbf{0.9931}&0.9905\\
\midrule SPIN (\%) & 10\% & 20\%&30\% & 40\% & 50\%&60\%&-&-\\
\midrule FCM& 0.9202&0.8431&0.7638&0.6847&0.6096&0.5296&-&-\\
L2FS&0.9926&0.9877&0.9713&0.9673&-&-&-&-\\
L1FS &\textbf{0.9977}&\textbf{0.9948}&\textbf{0.9923}&\textbf{0.9894}&\textbf{0.9848}&\textbf{0.9782}&-&-\\
\midrule RVIN (\%) & 10\% & 20\%&30\% & 40\% & -&-&-&-\\
\midrule FCM& 0.9922&0.9809&0.96672&0.9248&-&-&-&-\\
L2FS&0.9949&0.9923&0.9880&0.9731&-&-&-&-\\
L1FS&\textbf{0.9976}&\textbf{0.9957}&\textbf{0.9922}&\textbf{0.9868}-&-&-&-\\
\bottomrule
\end{tabular}
\end{threeparttable}}
\end{table}

\begin{table}[htbp]\scriptsize{\centering
 \begin{threeparttable}
 \caption{ SA comparison of different methods
applied on Fig.~\ref{fig:1c} with different levels of GN, SPIN, and RVIN.} \label{tab:3}
\begin{tabular}{lcccccccc}
\toprule GN ($\sigma$) & 10 & 20&30 & 40 & 50&60&70&80\\
\midrule
FCM& \textbf{1}&\textbf{1}&0.9998&0.9958&0.7927&0.7772&0.7488&0.7205\\
L2FS&\textbf{1}&\textbf{1}&\textbf{1}&\textbf{1}&0.9996&0.9992&\textbf{0.9989}&\textbf{0.9978}\\
L2L0&\textbf{1}&\textbf{1}&\textbf{1}&0.9999&0.9996&0.9991&0.9983&0.9967\\
L1FS&\textbf{1}&\textbf{1}&\textbf{1}&\textbf{1}&\textbf{0.9998}&\textbf{0.9994}&0.9985&0.9973\\
\midrule
 SPIN (\%) & 10\% & 20\%&30\% & 40\% & 50\%&60\%&-&-\\
\midrule
FCM&0.8498&0.7294&0.6128&0.5092&0.4248&0.3501&-&-\\
L2FS&0.9960&0.9925&0.9883&0.9822&0.9772&-&-&-\\
L2L0&0.9951&0.9880&0.9819&0.9740&0.9401&0.8752&-&-\\
L1FS&\textbf{0.9973}&\textbf{0.9937}&\textbf{0.9897}&\textbf{0.9854}&\textbf{0.9810}&\textbf{0.9732}&-&-\\
\midrule
RVIN (\%) & 10\% & 20\%&30\% & 40\% & 50\% & 60\% &-&-\\
\midrule
FCM& 0.8971&0.7992&0.6967&0.6085&0.5196&0.4294&-&-\\
L2FS&0.9974&0.9957&0.9899&0.9853&0.9841&0.8988&-&-\\
L2L0&0.9971&0.9955&0.9906&0.9856&0.9727&0.9488&-&-\\
L1FS&\textbf{0.9988}&\textbf{0.9963}&\textbf{0.9939}&\textbf{0.9906}&\textbf{0.9881}&\textbf{0.9777}&-&-\\
\bottomrule
\end{tabular}
\end{threeparttable}}
\end{table}

\begin{figure*}[!htbp]
\begin{center}
\subfloat[{GN $\sigma$=30}]{\includegraphics[width=1.893cm]{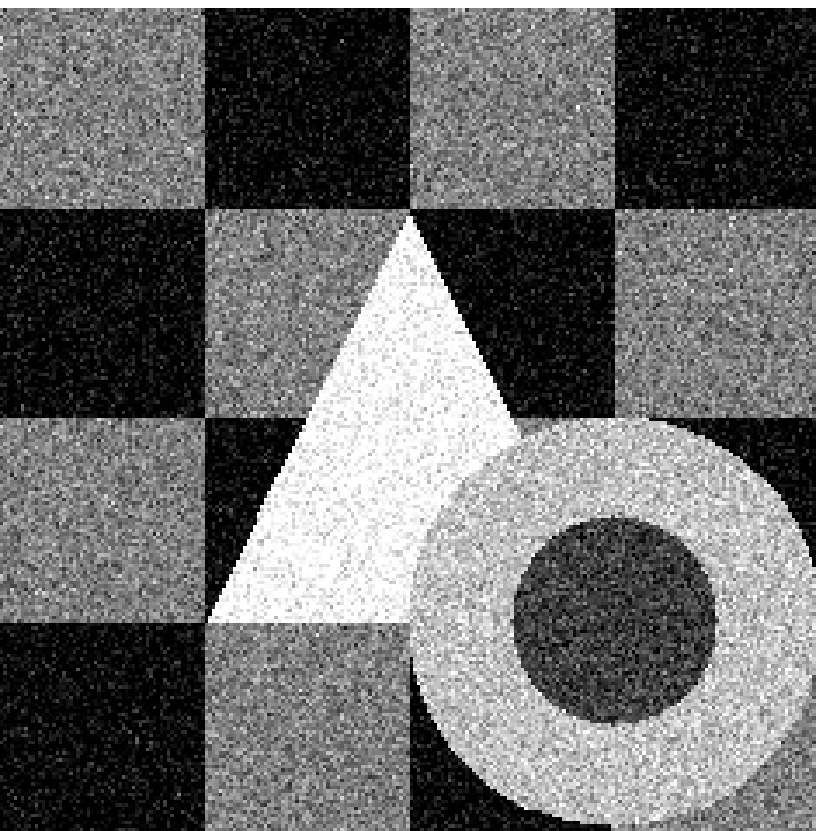}\label{fig:5a}}\
\subfloat[{GN $\sigma$=60}]{\includegraphics[width=1.893cm]{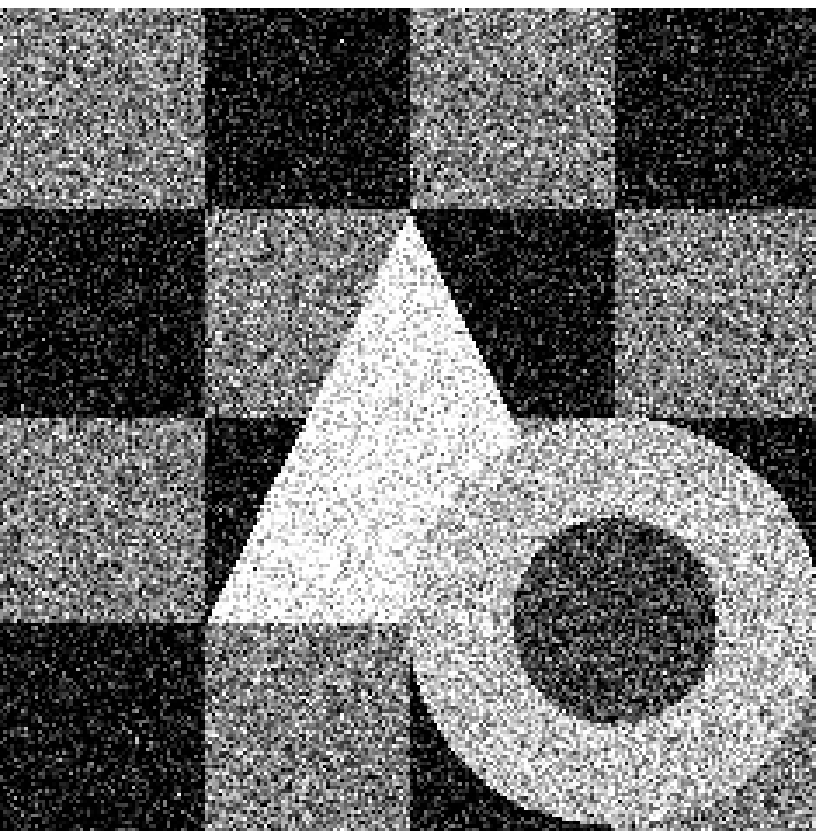}\label{fig:5b}}\
\subfloat[{SPIN 20\%}]{\includegraphics[width=1.893cm]{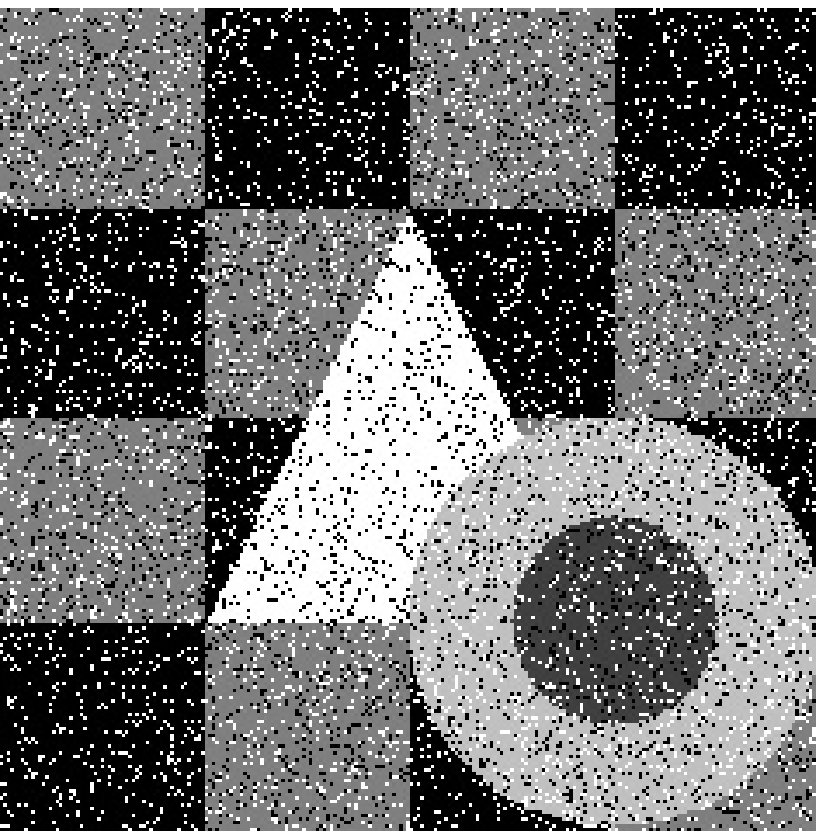}\label{fig:5c}}\
\subfloat[{SPIN 40\%}]{\includegraphics[width=1.893cm]{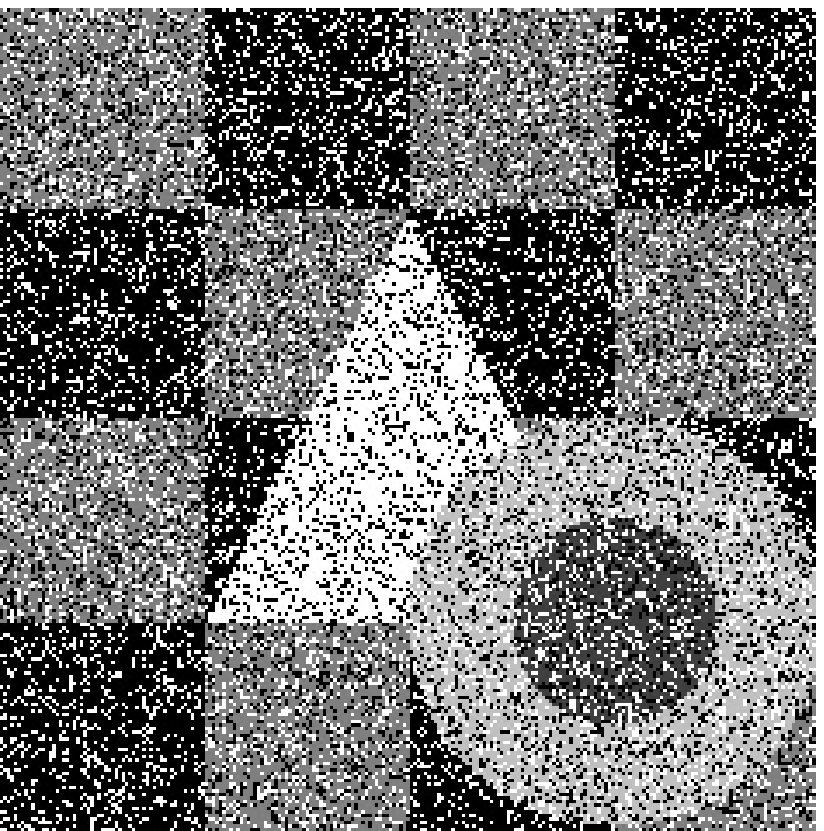}\label{fig:5d}}\
\subfloat[{RVIN 20\%}]{\includegraphics[width=1.893cm]{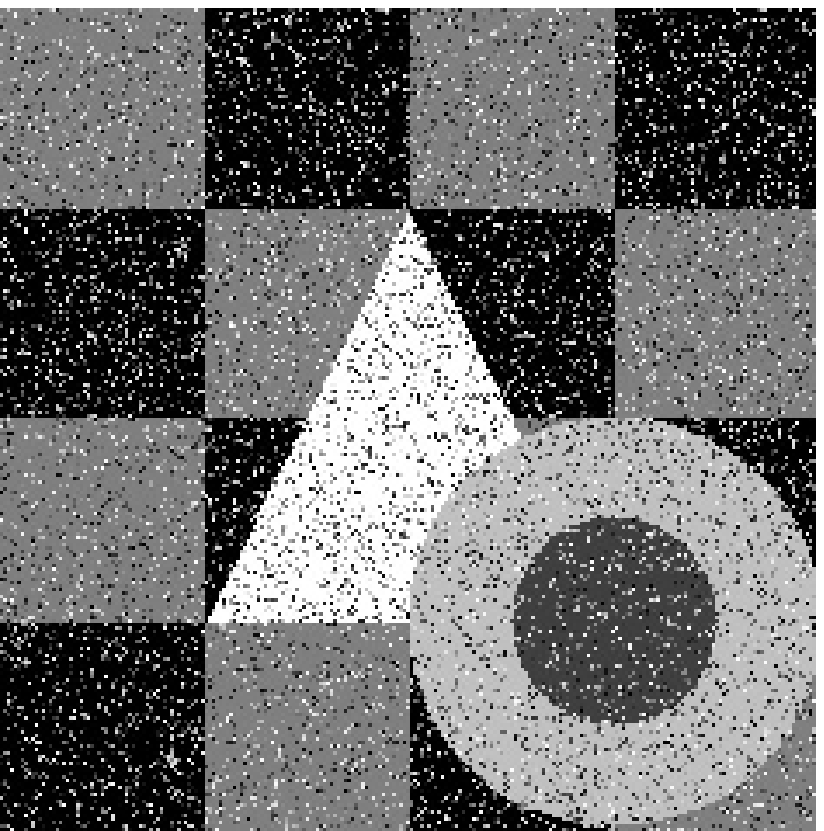}\label{fig:5e}}\
\subfloat[{RVIN 40\%}]{\includegraphics[width=1.893cm]{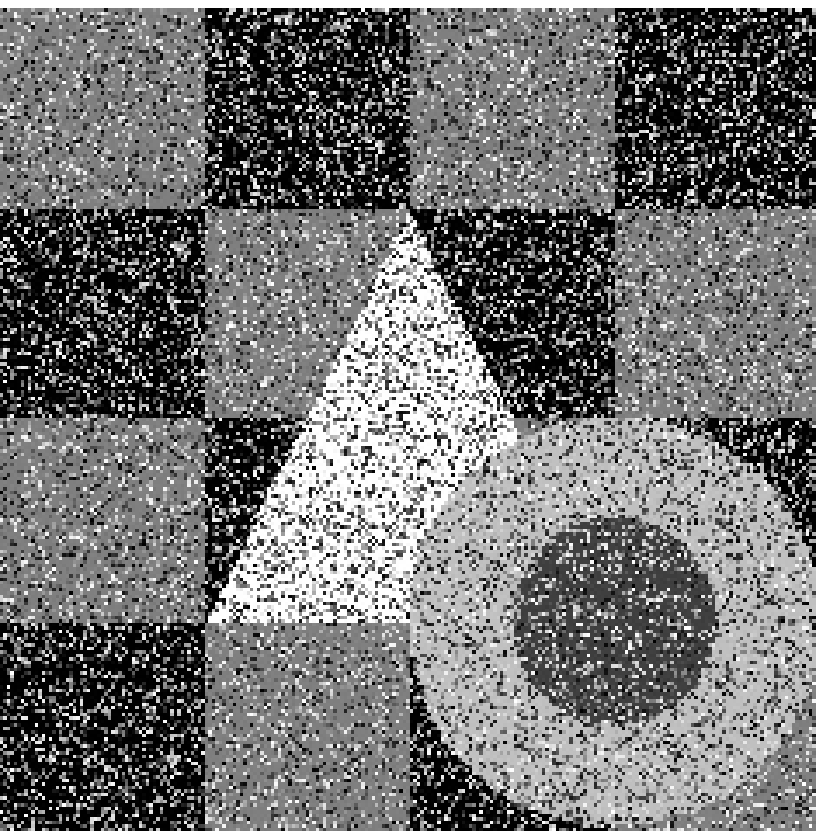}\label{fig:5f}}\\
\subfloat[{{0.6634}}]{\includegraphics[width=1.893cm]{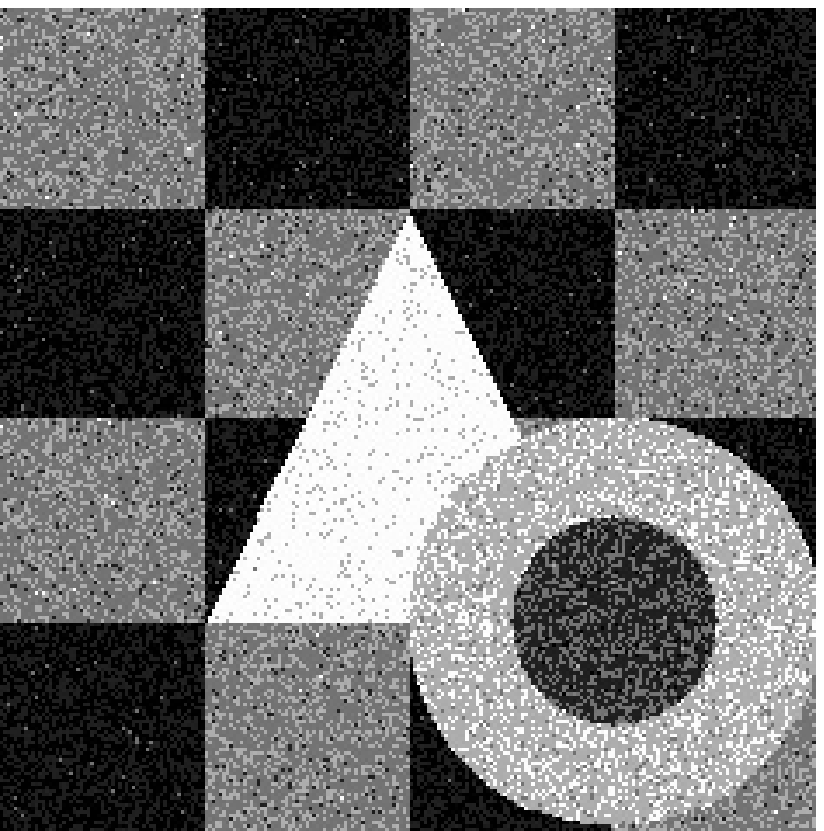}\label{fig:5g}}\
\subfloat[{0.4718}]{\includegraphics[width=1.893cm]{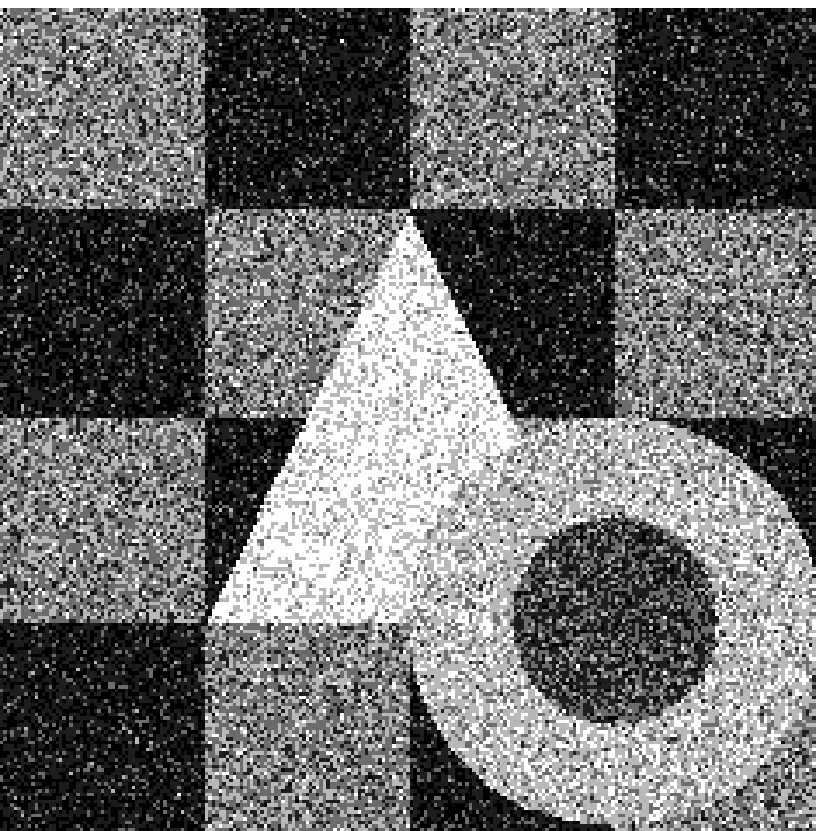}\label{fig:5h}}\
\subfloat[{0.8431}]{\includegraphics[width=1.893cm]{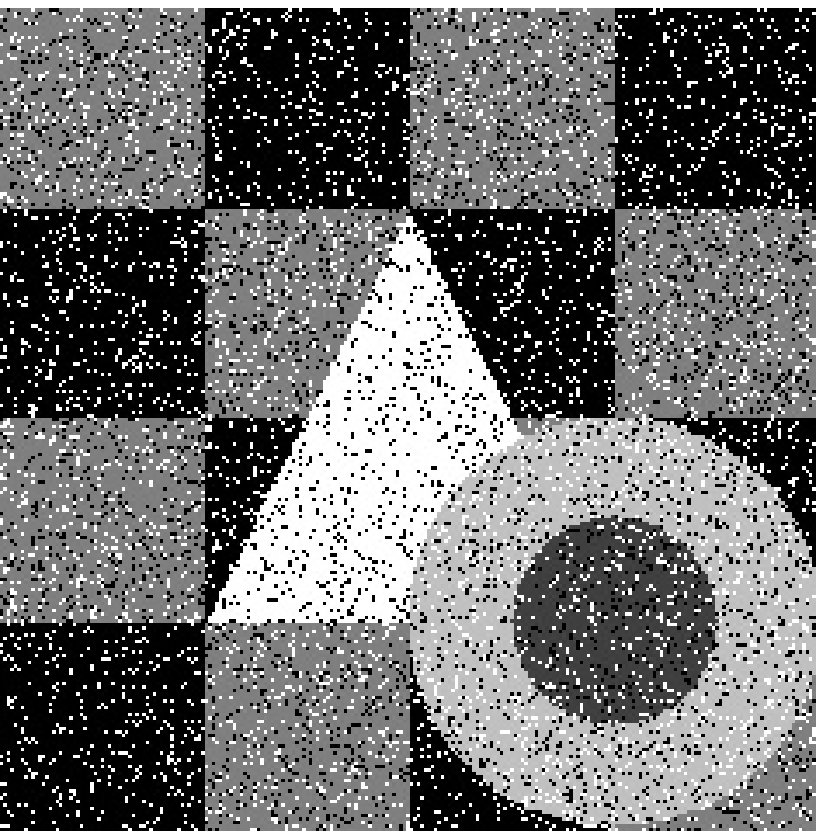}\label{fig:5i}}\
\subfloat[{0.6847}]{\includegraphics[width=1.893cm]{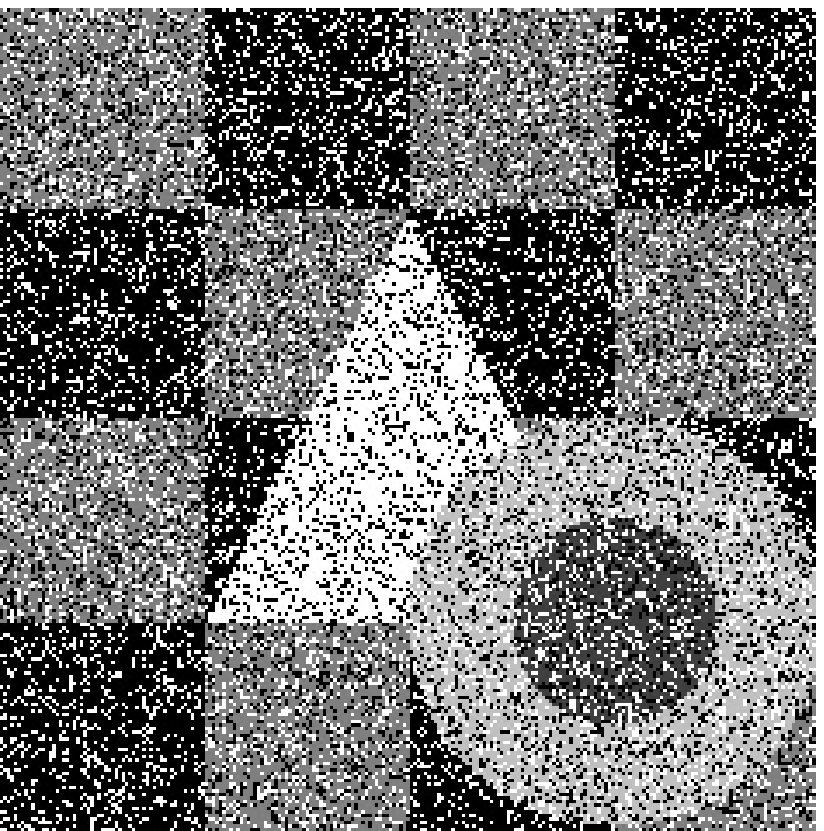}\label{fig:5j}}\
\subfloat[{0.9809}]{\includegraphics[width=1.893cm]{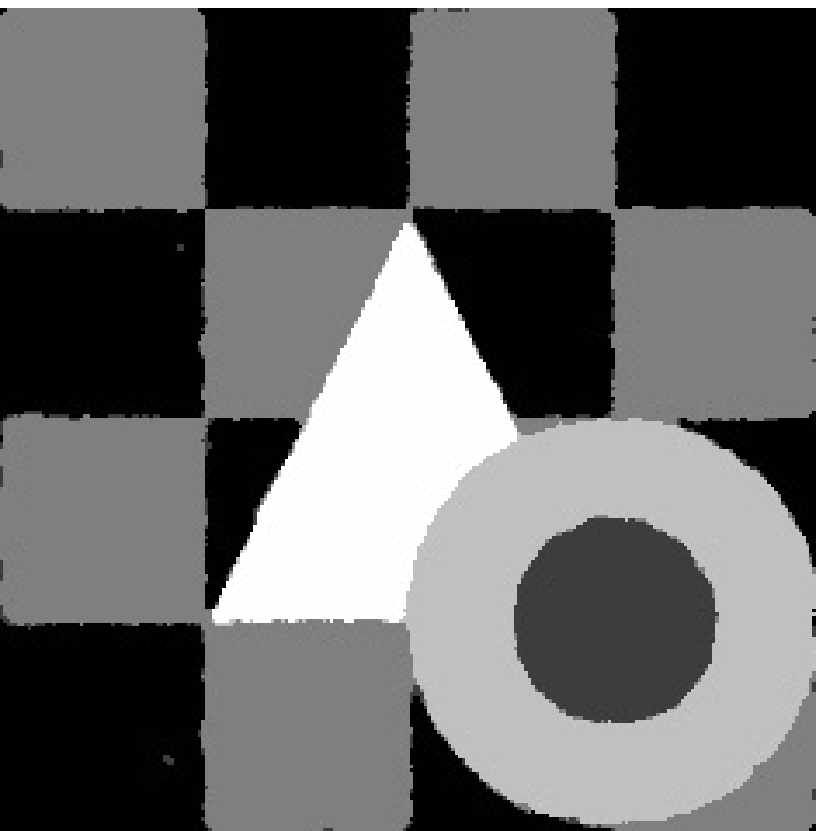}\label{fig:5k}}\
\subfloat[{0.9248}]{\includegraphics[width=1.893cm]{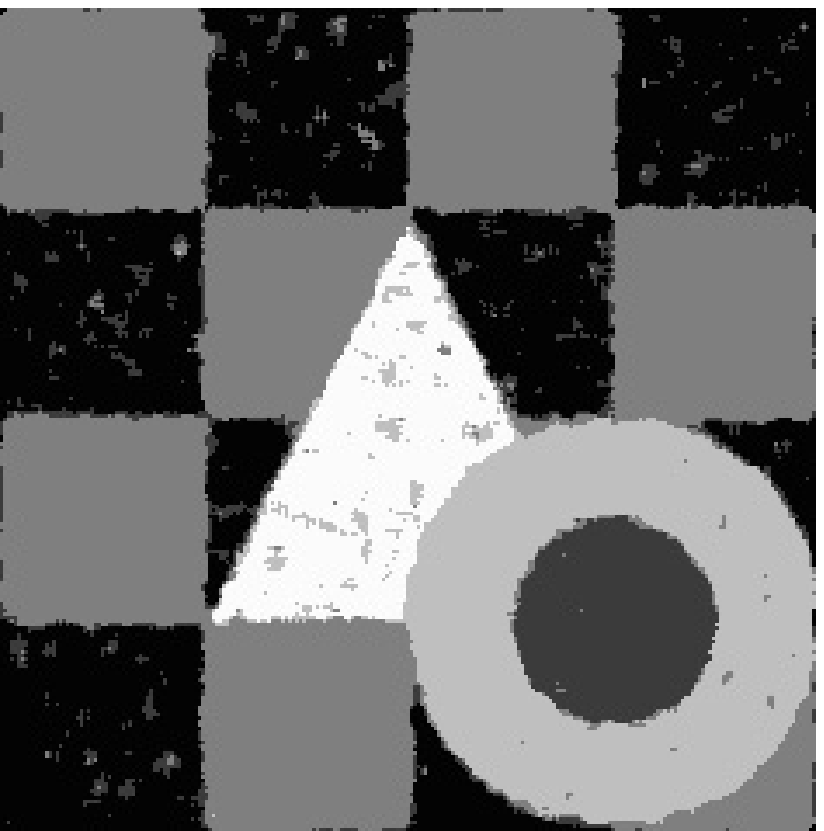}\label{fig:5l}}\\
\subfloat[{0.9994}]{\includegraphics[width=1.893cm]{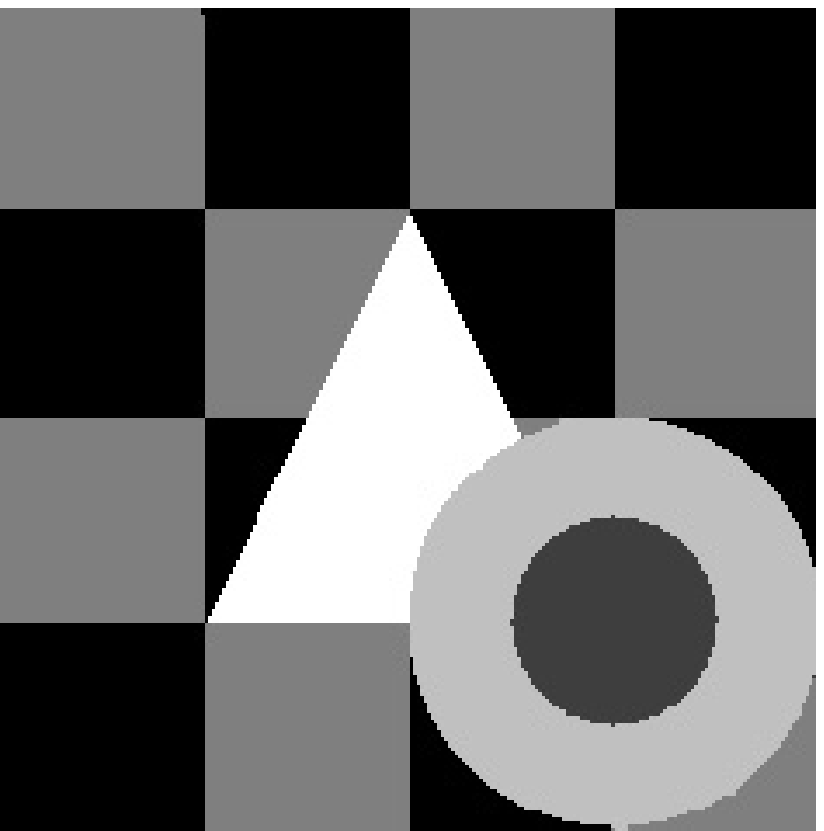}\label{fig:5m}}\
\subfloat[{0.9950}]{\includegraphics[width=1.893cm]{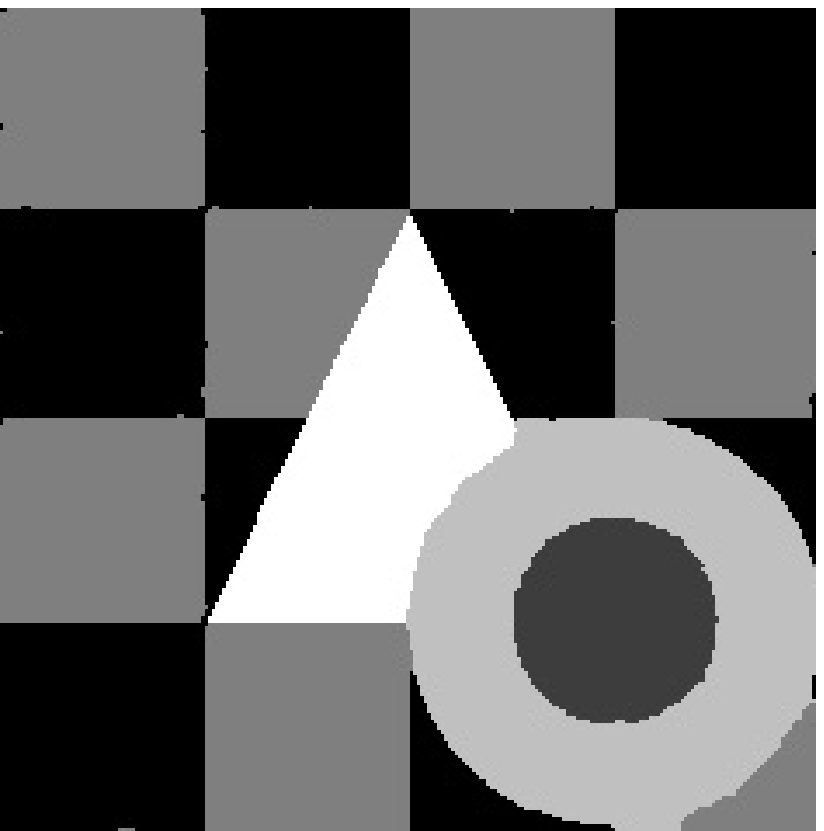}\label{fig:5n}}\
\subfloat[{0.9877}]{\includegraphics[width=1.893cm]{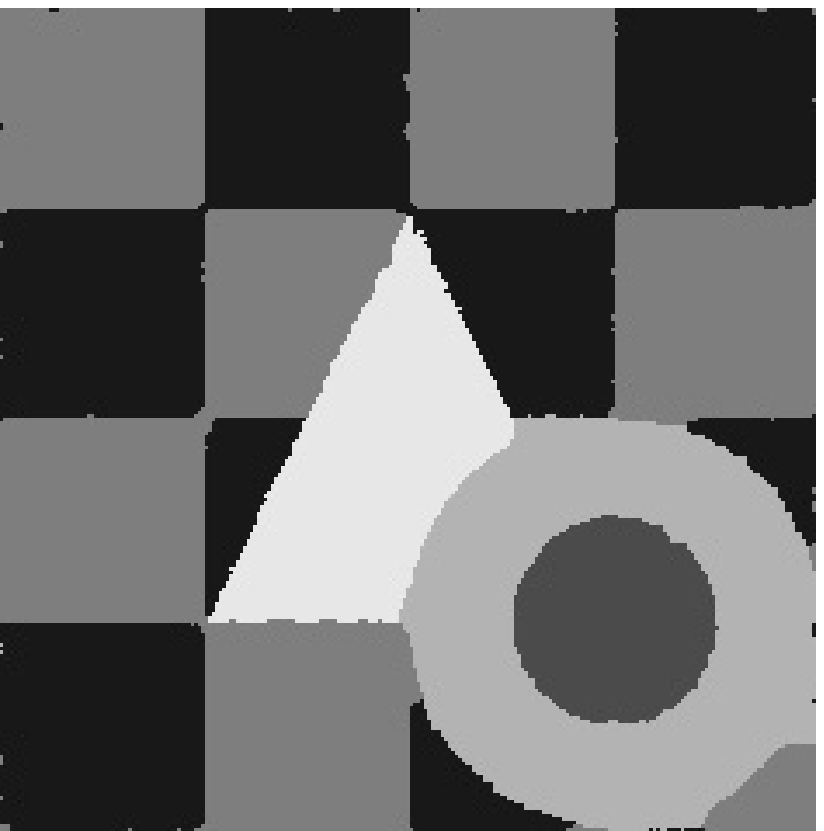}\label{fig:5o}}\
\subfloat[{0.9673}]{\includegraphics[width=1.893cm]{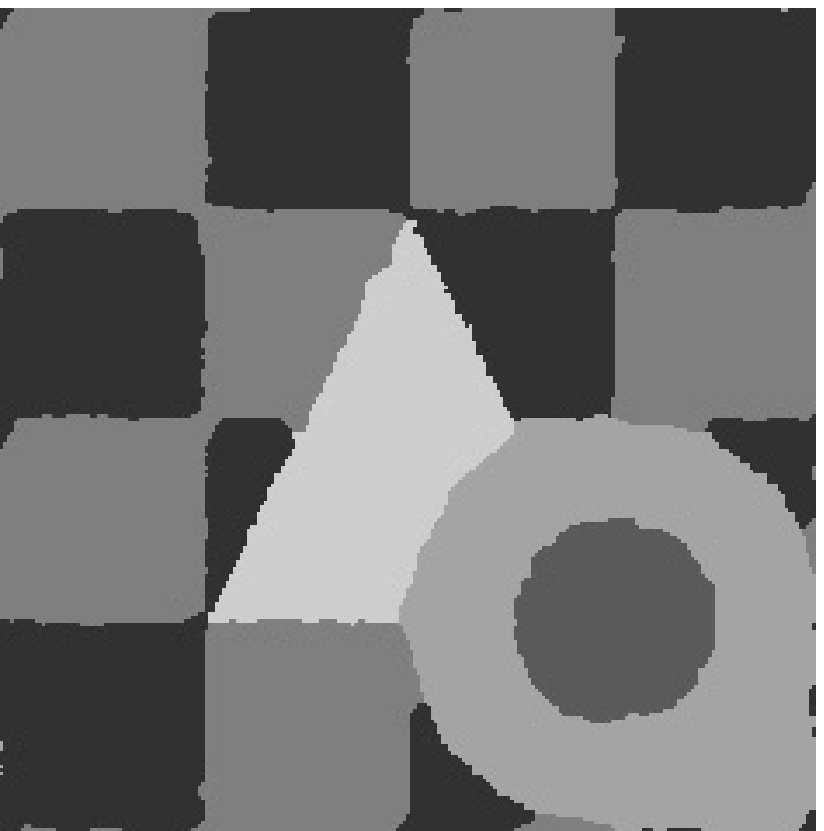}\label{fig:5p}}\
\subfloat[{0.9923}]{\includegraphics[width=1.893cm]{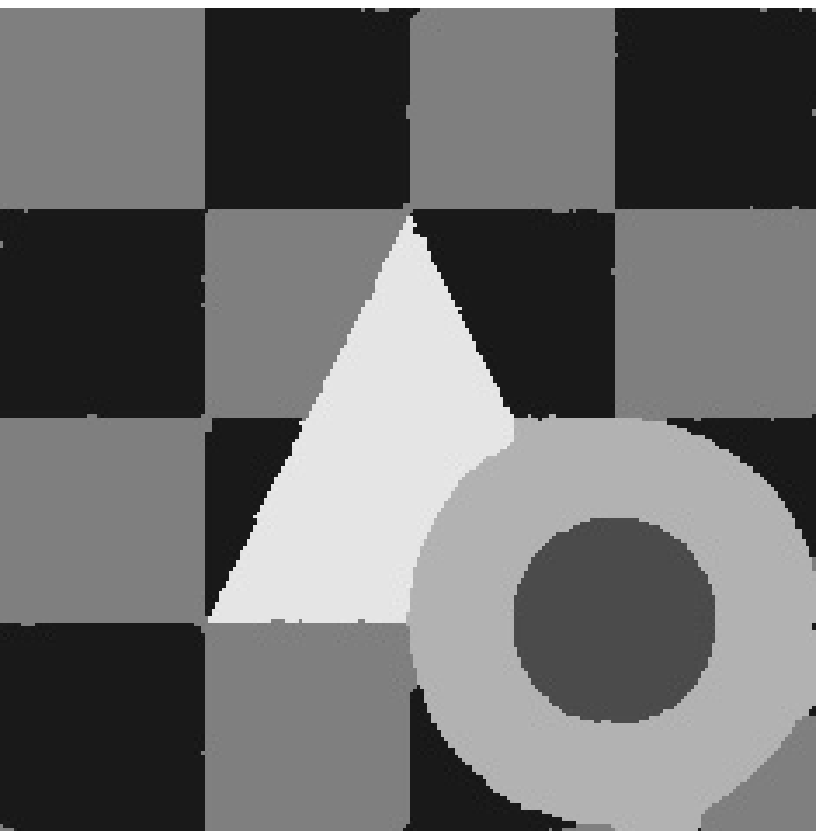}\label{fig:5q}}\
\subfloat[{0.9731}]{\includegraphics[width=1.893cm]{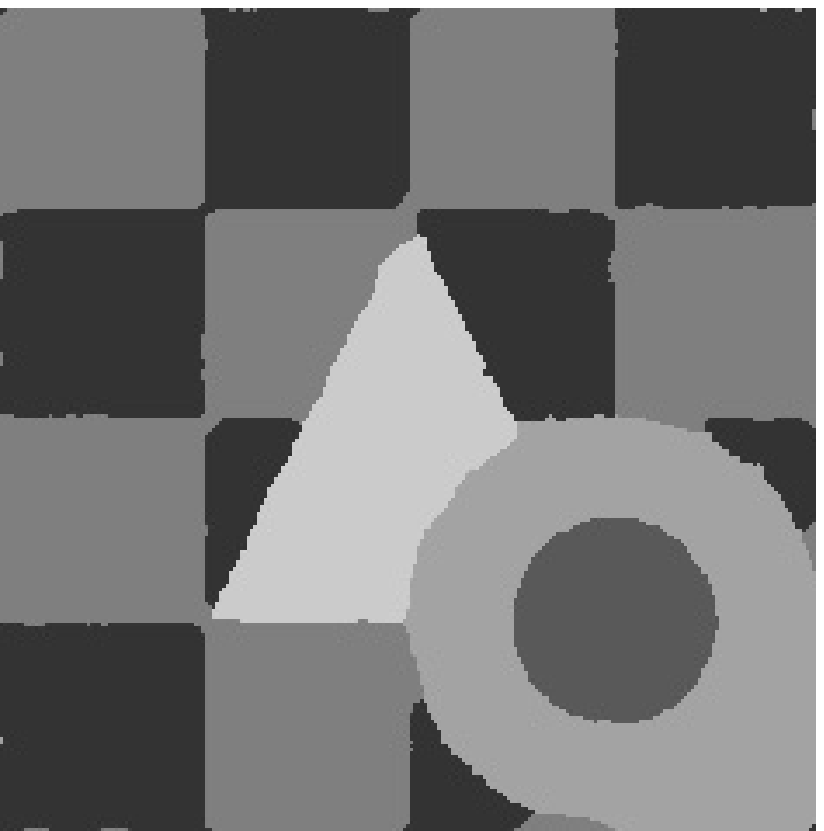}\label{fig:5r}}\\
\subfloat[{0.9993}]{\includegraphics[width=1.893cm]{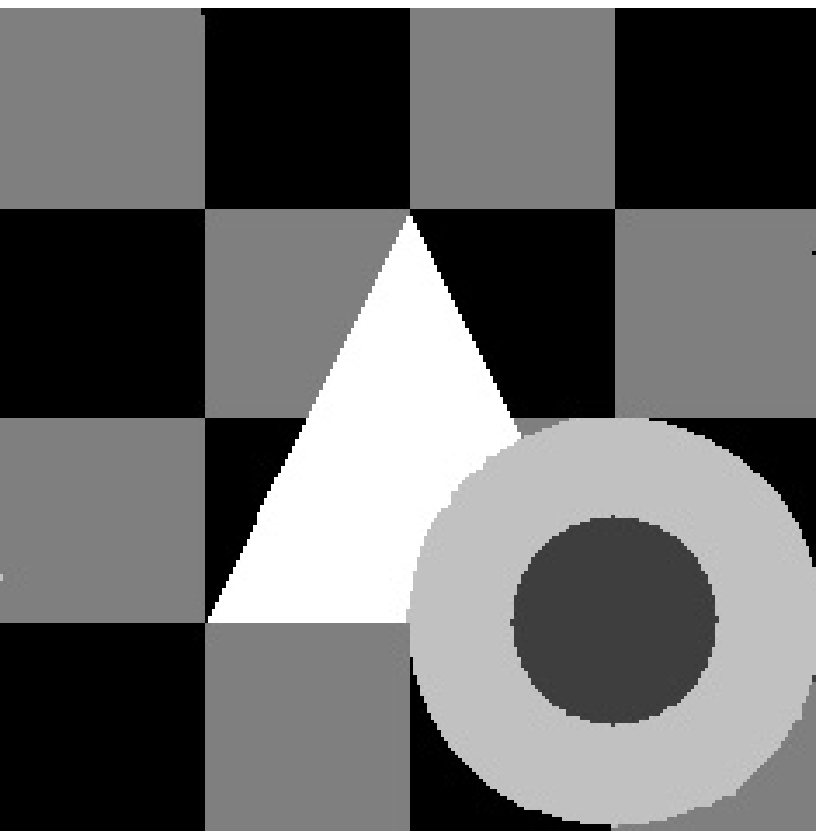}\label{fig:5s}}\
\subfloat[{0.9950}]{\includegraphics[width=1.893cm]{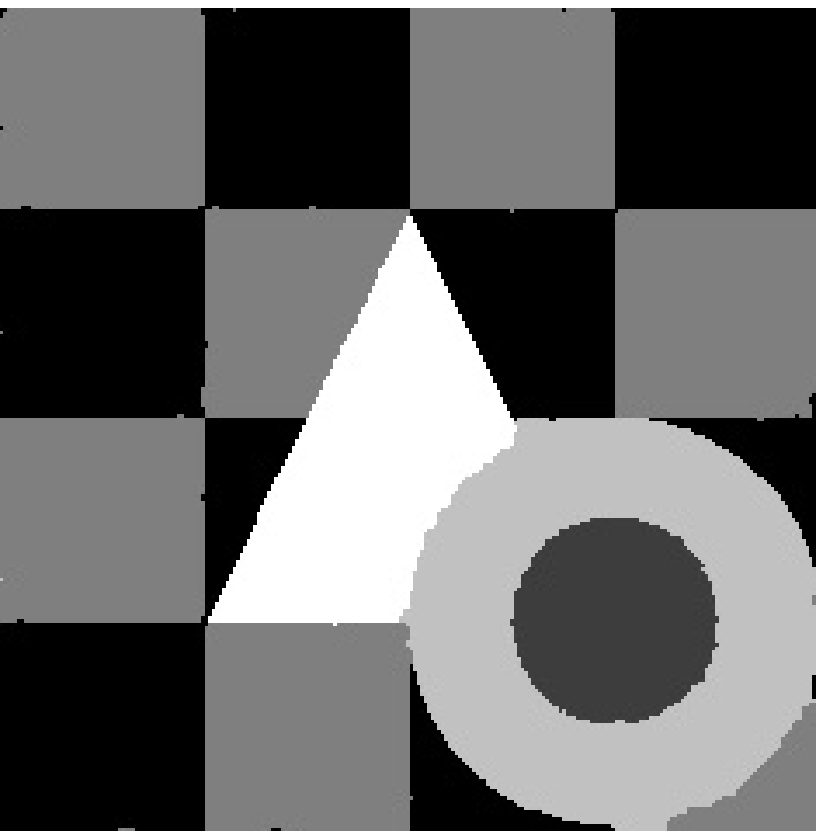}\label{fig:5t}}\
\subfloat[{0.9948}]{\includegraphics[width=1.893cm]{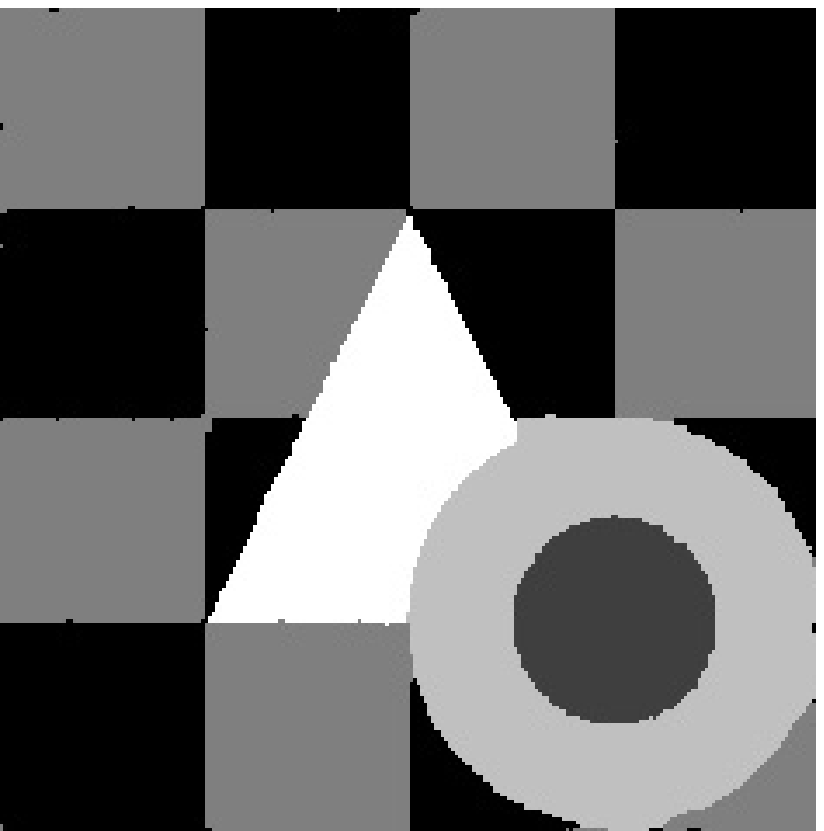}\label{fig:5u}}\
\subfloat[{0.9894}]{\includegraphics[width=1.893cm]{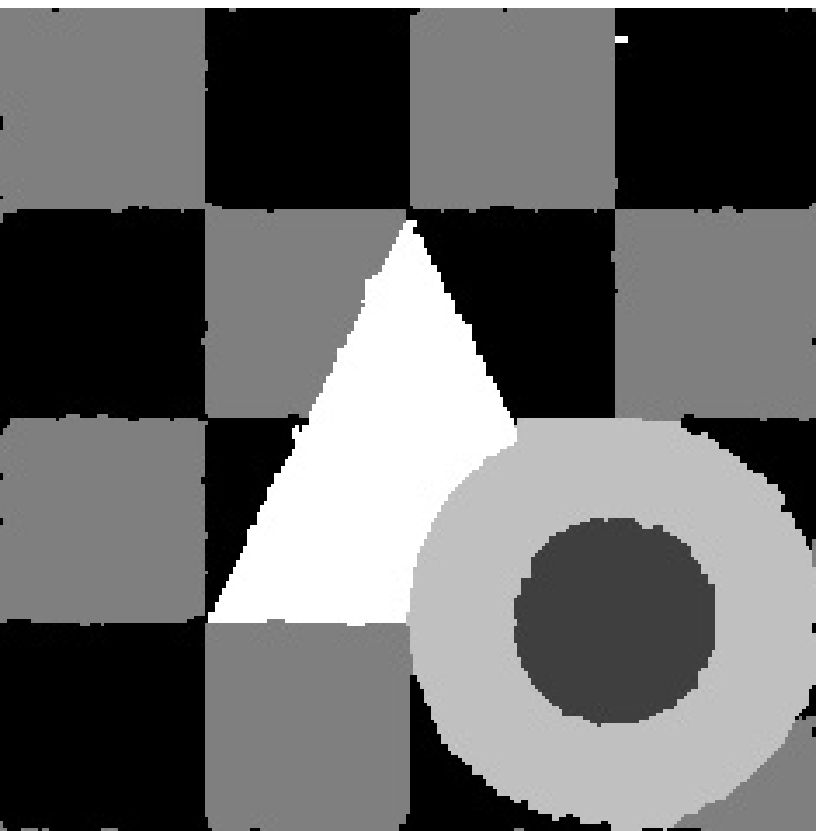}\label{fig:5v}}\
\subfloat[{0.9957}]{\includegraphics[width=1.893cm]{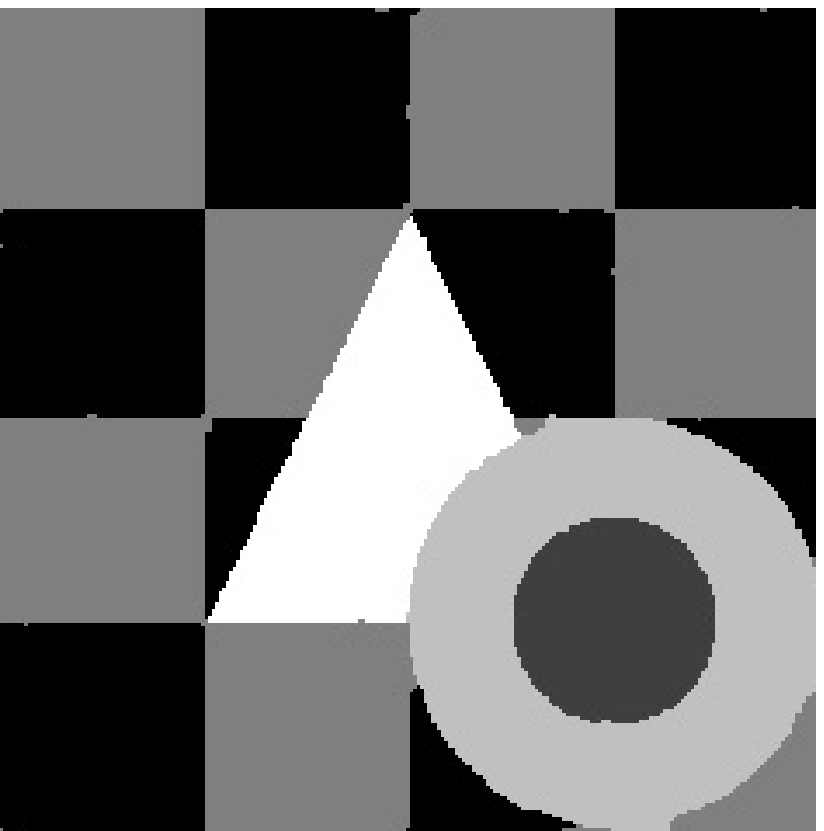}\label{fig:5w}}\
\subfloat[{0.9868}]{\includegraphics[width=1.893cm]{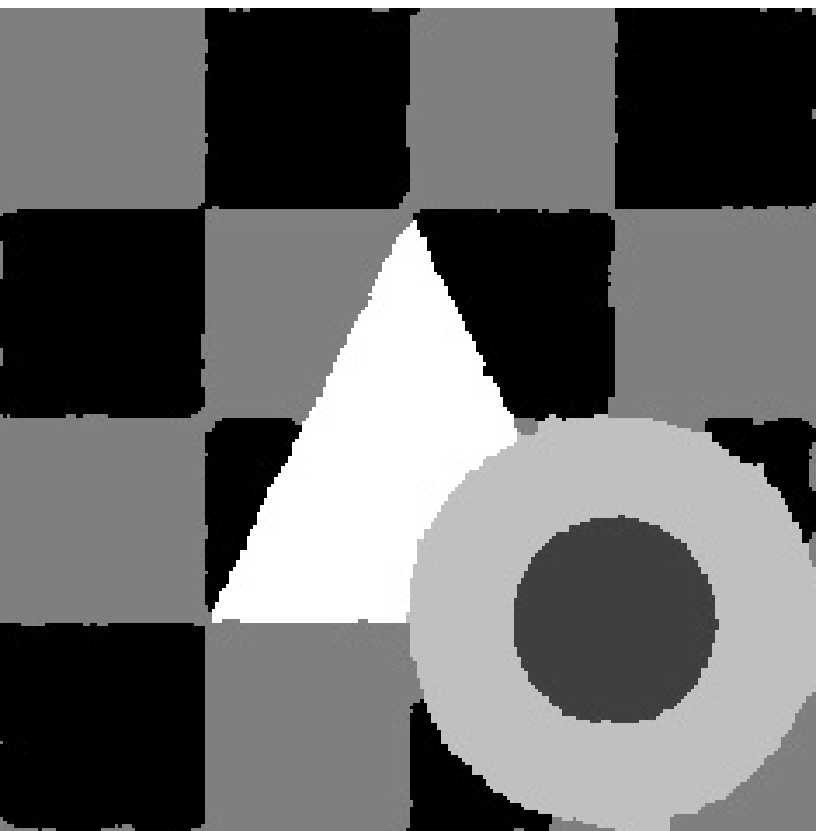}\label{fig:5x}}
\end{center}
\caption{Multiphase segmentation on the synthetic image Fig.~\ref{fig:1b} with different levels of GN, SPIN, and RVIN.
First row: images contaminated different types of noise with different levels. Second row to last row: results of
FCM, L2FS and L1FS, respectively. The SA values are reported below each segmentation result.}\label{fig:5}
\end{figure*}

\subsection{Test on Fig.~\ref{fig:1b}}
The performance comparison for the multiphase synthetic piecewise constant gray image Fig.~\ref{fig:1b} is shown in Tab.~\ref{tab:2} and Fig.~\ref{fig:5}.

As shown in Tab.~\ref{tab:2}, FCM performs poorly for GN, while L2FS and L1FS perform relatively well with similar SA. For the noise level $\sigma=10$, both L2FS and L1FS give a correct segmentation result. For SPIN, FCM also gives the worst performance in terms of SA, while L1FS achieves the best performance. L2FS fails to yield a correct segmentation result when noise levels $\sigma\ge50$. For RVIN, FCM\_S2 achieves high SA values since it can smooth out some noise in the segmentation process. L2FS performs much better than FCM\_S2, and L1FS performs the best among all methods.

Fig.~\ref{fig:5} shows some results corresponding to Tab.\ref{tab:2}. The first row is the noisy images being tested. The second row shows the results of FCM (Fig.~\ref{fig:5g}-\ref{fig:5j}) and FCM\_S2 (Fig.~\ref{fig:5k}-\ref{fig:5l}). Most of them looks very ``noisy'' except Fig.~\ref{fig:5k}. L2FS and L1FS give very clean results in the third row and last row, respectively. Comparing the results by L2FS and L1FS for Gaussian noise, both of them have high visual qualities. However, for SPIN and RVIN, it is obvious that L1FS preserves the contrast much better than L2FS and has better segmentation results.

\subsection{Test on Fig.~\ref{fig:1c}}

\begin{figure}[!htbp]
\captionsetup[subfigure]{labelformat=empty}
\begin{center}
\subfloat[{GN $\sigma$=30}]{\includegraphics[width=1.893cm]{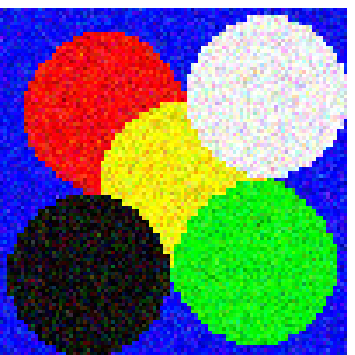}\label{fig:6a}}\
\subfloat[{GN $\sigma$=60}]{\includegraphics[width=1.893cm]{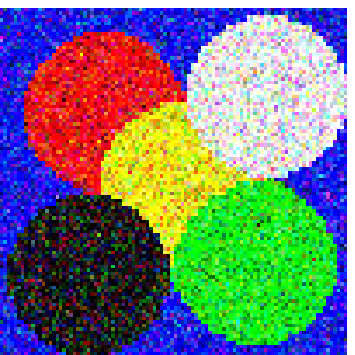}\label{fig:6b}}\
\subfloat[{SPIN 20\%}]{\includegraphics[width=1.893cm]{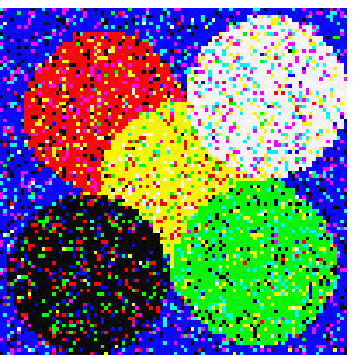}\label{fig:6c}}\
\subfloat[{SPIN 40\%}]{\includegraphics[width=1.893cm]{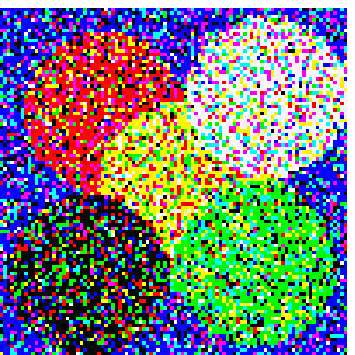}\label{fig:6d}}\
\subfloat[{RVIN 20\%}]{\includegraphics[width=1.893cm]{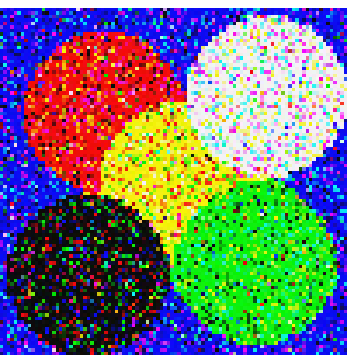}\label{fig:6e}}\
\subfloat[{RVIN 40\%}]{\includegraphics[width=1.893cm]{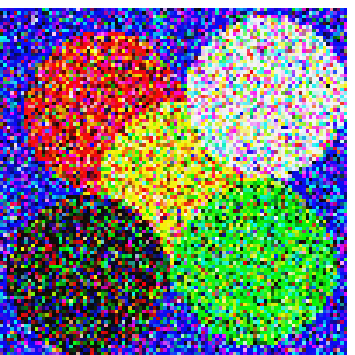}\label{fig:6f}}\\
\subfloat[{0.9998}]{\includegraphics[width=1.893cm]{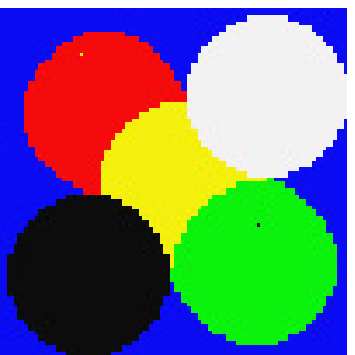}\label{fig:6g}}\
\subfloat[{0.7772}]{\includegraphics[width=1.893cm]{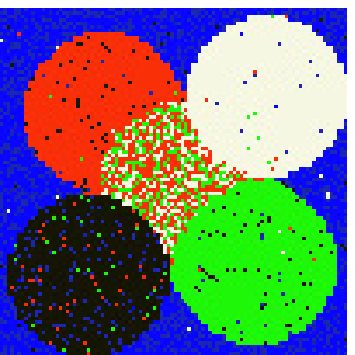}\label{fig:6h}}\
\subfloat[{0.7294}]{\includegraphics[width=1.893cm]{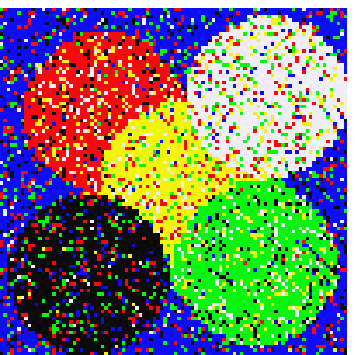}\label{fig:6i}}\
\subfloat[{0.5092}]{\includegraphics[width=1.893cm]{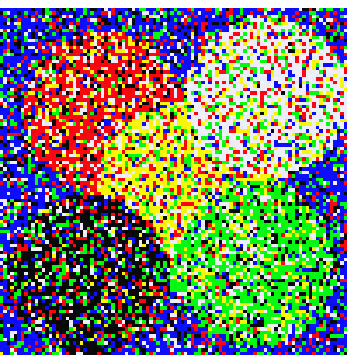}\label{fig:6j}}\
\subfloat[{0.7992}]{\includegraphics[width=1.893cm]{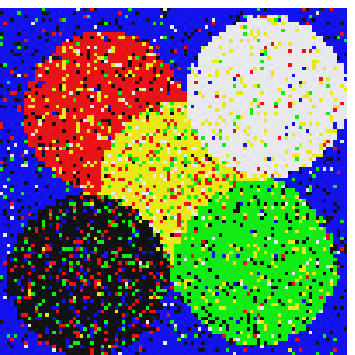}\label{fig:6k}}\
\subfloat[{0.6085}]{\includegraphics[width=1.893cm]{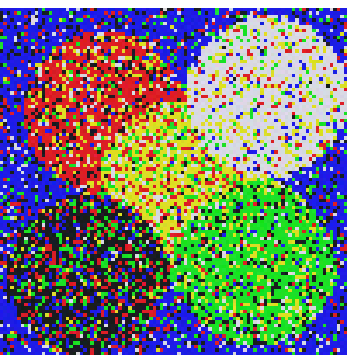}\label{fig:6l}}\\
\subfloat[{1}]{\includegraphics[width=1.893cm]{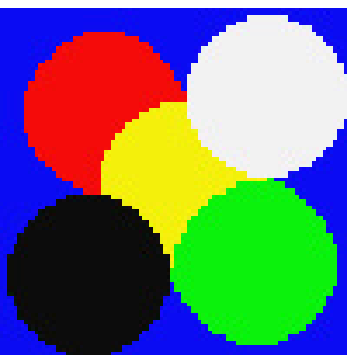}\label{fig:6m}}\
\subfloat[{0.9992}]{\includegraphics[width=1.893cm]{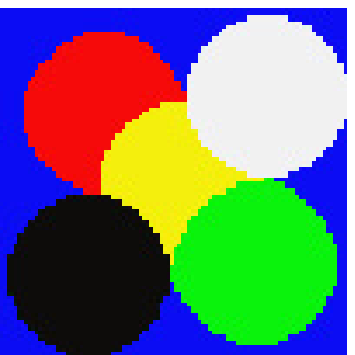}\label{fig:6n}}\
\subfloat[{0.9925}]{\includegraphics[width=1.893cm]{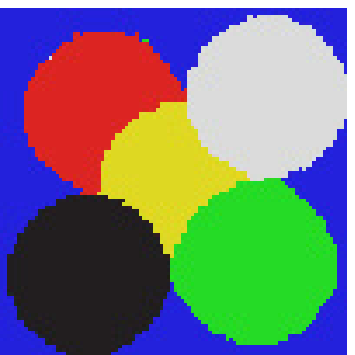}\label{fig:6o}}\
\subfloat[{0.9822}]{\includegraphics[width=1.893cm]{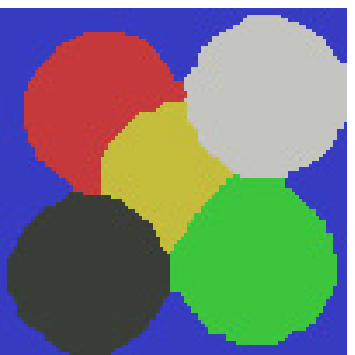}\label{fig:6p}}\
\subfloat[{0.9957}]{\includegraphics[width=1.893cm]{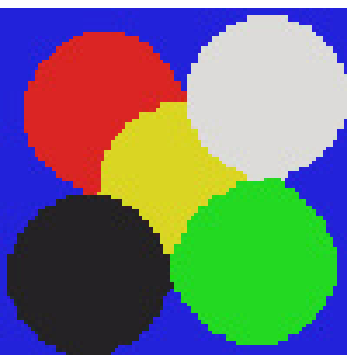}\label{fig:6q}}\
\subfloat[{0.9853}]{\includegraphics[width=1.893cm]{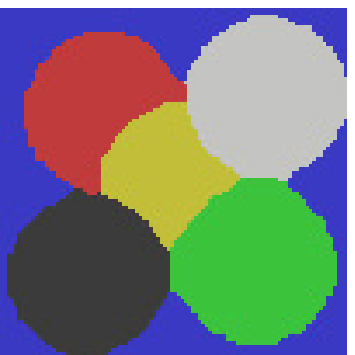}\label{fig:6r}}\\
\subfloat[{1}]{\includegraphics[width=1.893cm]{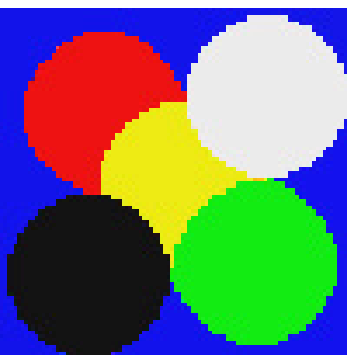}\label{fig:6y}}\
\subfloat[{0.9991}]{\includegraphics[width=1.893cm]{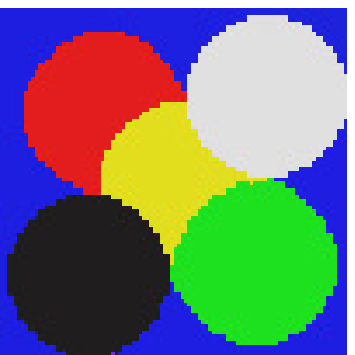}\label{fig:6z}}\
\subfloat[{0.9880}]{\includegraphics[width=1.893cm]{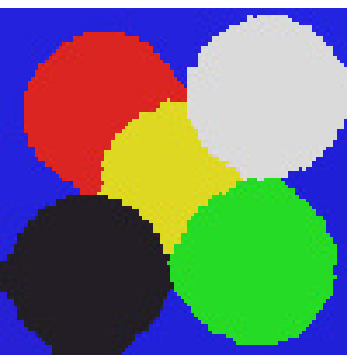}}\
\subfloat[{0.9740}]{\includegraphics[width=1.893cm]{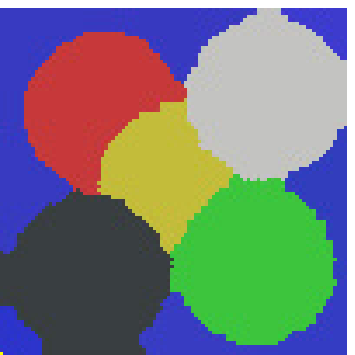}}\
\subfloat[{0.9955}]{\includegraphics[width=1.893cm]{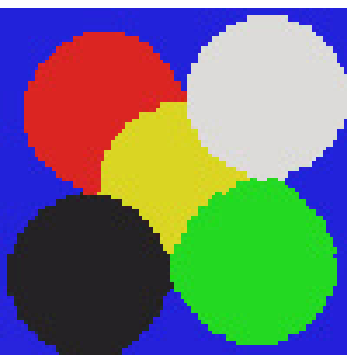}}\
\subfloat[{0.9856}]{\includegraphics[width=1.893cm]{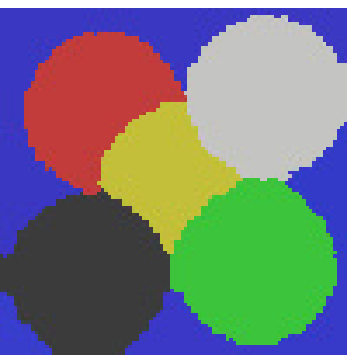}}\\
\subfloat[{1}]{\includegraphics[width=1.893cm]{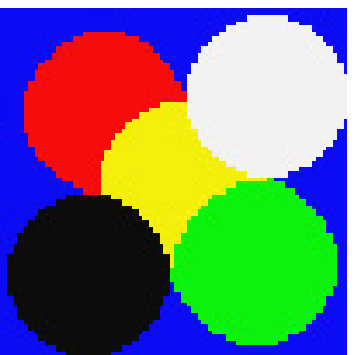}\label{fig:6s}}\
\subfloat[{0.9994}]{\includegraphics[width=1.893cm]{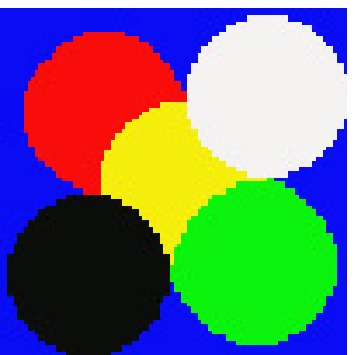}\label{fig:6t}}\
\subfloat[{0.9937}]{\includegraphics[width=1.893cm]{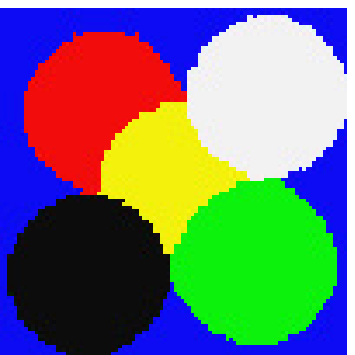}\label{fig:6u}}\
\subfloat[{0.9854}]{\includegraphics[width=1.893cm]{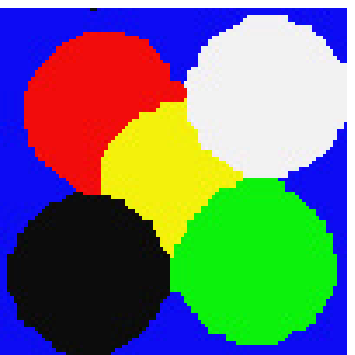}\label{fig:6v}}\
\subfloat[{0.9963}]{\includegraphics[width=1.893cm]{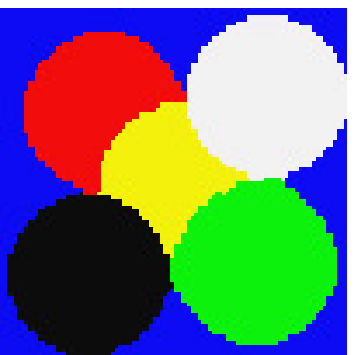}\label{fig:6w}}\
\subfloat[{0.9906}]{\includegraphics[width=1.893cm]{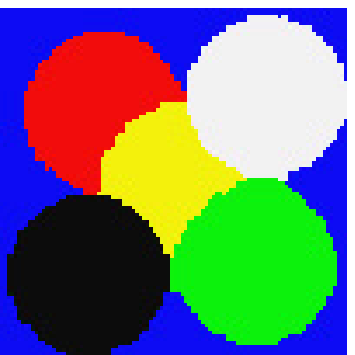}\label{fig:6x}}
\end{center}
\caption{Multiphase segmentation on the synthetic color image Fig.~\ref{fig:1c} with different levels of
GN, SPIN, and RVIN. First row: images contaminated different types of noise with different levels.
Second row to last row: results of FCM, L2FS, L2L0, and L1FS, respectively. The SA values are reported below each segmentation result.}\label{fig:6}
\end{figure}

In Tab.~\ref{tab:3} and Fig.~\ref{fig:6}, we test the multiphase synthetic piecewise constant color image Fig.~\ref{fig:1c} with various levels of GN, SPIN, and RVIN.

From Tab.~\ref{tab:3}, in the GN case, when the standard deviation of noise $\sigma\le20 $, all the four methods, including FCM, L2FS, L2L0 and L1FS, give correct segmentation results. Moreover, both L2FS and L1FS yield correct segmentation results when $\sigma\le40$. When $\sigma\ge50$, the performance of FCM decreases rapidly, while L2FS, L2L0, and L1FS still achieve very large SA values. Note that we initialize $\mathbf{U}$ randomly for GN in this test. For SPIN and RVIN, FCM has the worst performance which is far lower than that of the other three methods. It is also obvious that L1FS outperforms L2L0 and L2FS.

Fig.~\ref{fig:6} shows some results corresponding to Tab.~\ref{tab:3}. The first row shows the tested noisy images. The results of FCM in the second row seems to be ``noisy'' in most cases. The results of L2FS (the third row), L2L0 (the fourth row), and L1FS (the last row) are very clean. However, in terms of contrast,
it is obvious that L1FS outperforms L2L0 and L2FS particularly for SPIN and RVIN. 

\subsection{Test on real images}
We test some real images including two
medical images and six natural images
in Fig.~\ref{fig:7} without artificial noise.
However, the images can be regarded as containing some types
of noise due to the acquisition and transmission processes.

The results of FCM and L1FS are displayed for comparison.
One can see that FCM tends to produce some tiny components
and irregular segmentation boundaries. By contrast,
L1FS tends to smooth out tiny components to generate large ones and smooth boundaries between regions,
which is more natural for human vision system
and good for postprocessing. This smoothing effect is mainly
achieved by total variation regularization in the L1FS model.
Moreover, L1FS preserves slightly better
contrast in the piecewise constant approximation than FCM, which is mainly
achieved by the use of the L1-norm fidelity.

In Figs.~\ref{fig:7d}-\ref{fig:7f},  FCM and L1FS give quite different segmentation results. Obviously, FCM fails to segment the blue color part of the clothes
in the original image, while the proposed L1FS works well. To illustrate the difference of these two methods, we display, in Fig.~\ref{fig:8}, the corresponding six segmented regions of FCM and L1FS for the women image Fig.~\ref{fig:7d}, respectively.
We find that the segmented regions of FCM in Fig.~\ref{fig:8a}-\ref{fig:8e} are somehow ``noisy''.
In particular, the background lattice pattern is partitioned into five regions as shown
in Fig.~\ref{fig:8a}-\ref{fig:8e}.
Compared with FCM, the proposed L1FS gives quite clean segmented
regions in the second row in Fig.~\ref{fig:8}. Especially, the background lattice pattern are
classified into only two classes as shown in Fig.~\ref{fig:8g}-\ref{fig:8h}.
We further compare the blue parts of the clothes corresponding to Fig.~\ref{fig:8c} and Fig.~\ref{fig:8i}.
In Fig.~\ref{fig:8c}, some background pattern heavily affects the estimation of $\mathbf{C}$, and therefore the color is not blue any more. However, in Fig.~\ref{fig:8i}, since the background is clean, the correct color can be obtained. To sum up, Fig.~\ref{fig:8} demonstrates that the proposed L1FS gives smoother segmentations than FCM.

\begin{figure}[!htbp]\label{real}
\begin{center}
\subfloat[N=2]{\includegraphics[width=1.893cm,height=1.893cm]{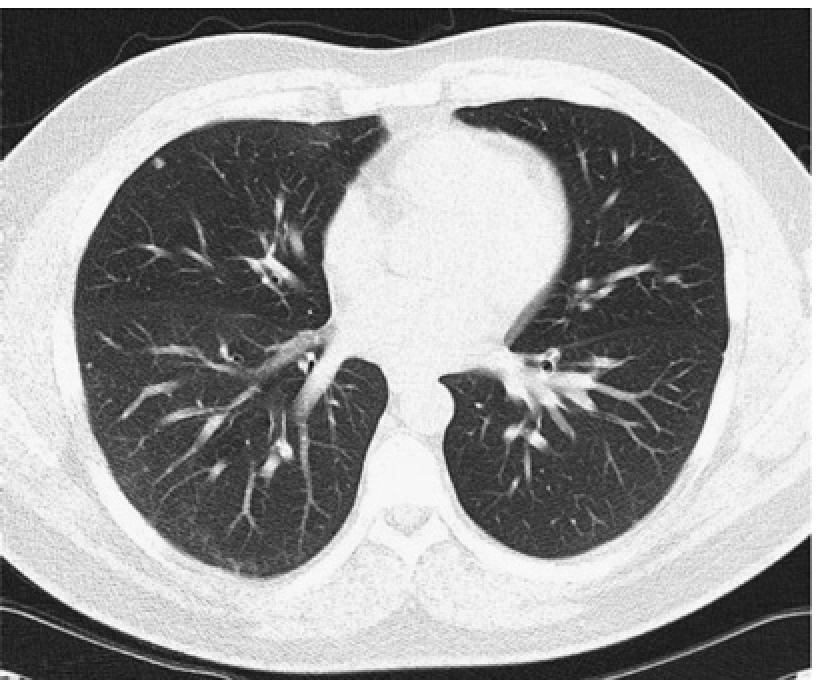}\label{fig:7s}}\
\subfloat[ ]{\includegraphics[width=1.893cm,height=1.893cm]{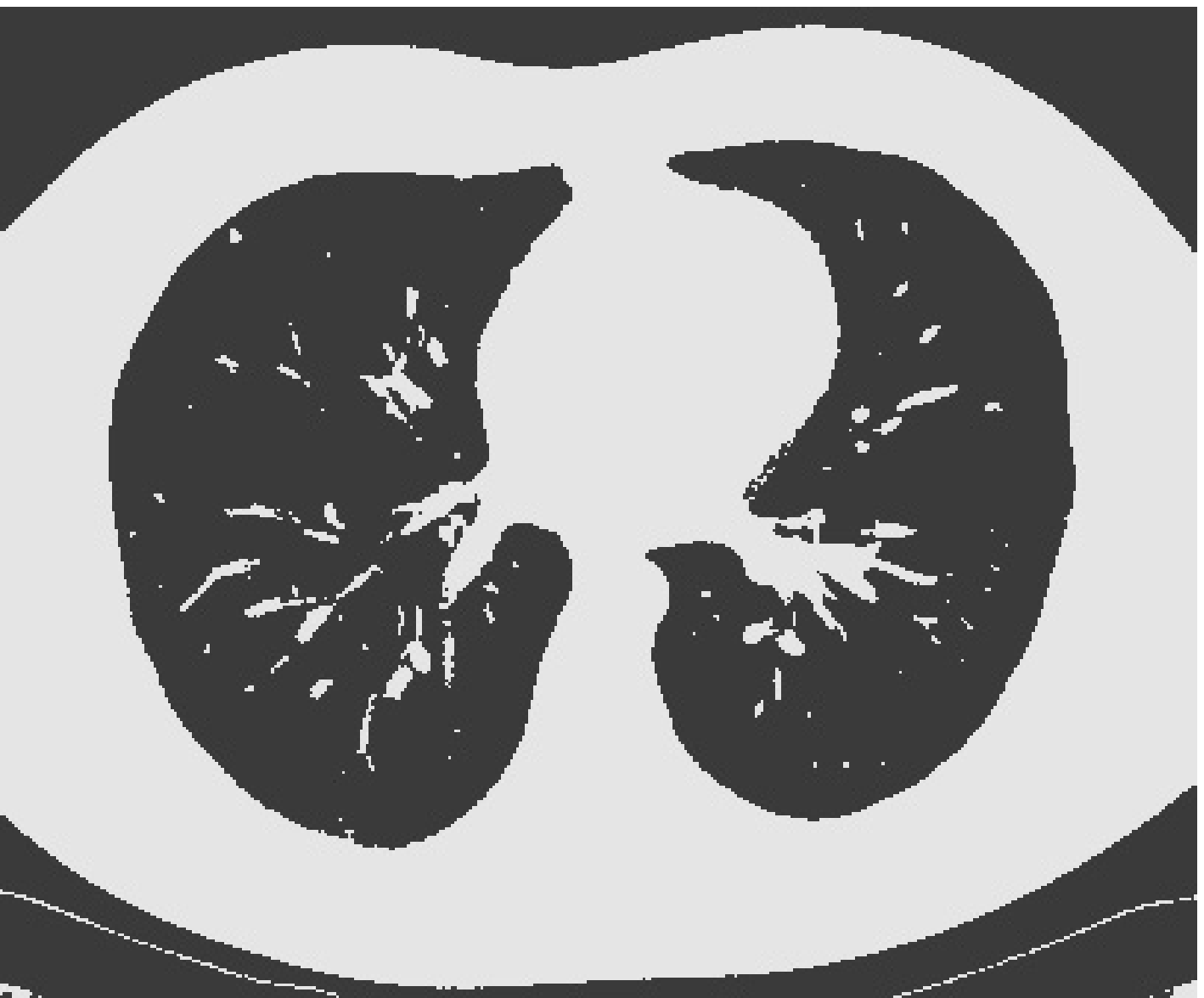}\label{fig:7t}}\
\subfloat[ ]{\includegraphics[width=1.893cm,height=1.893cm]{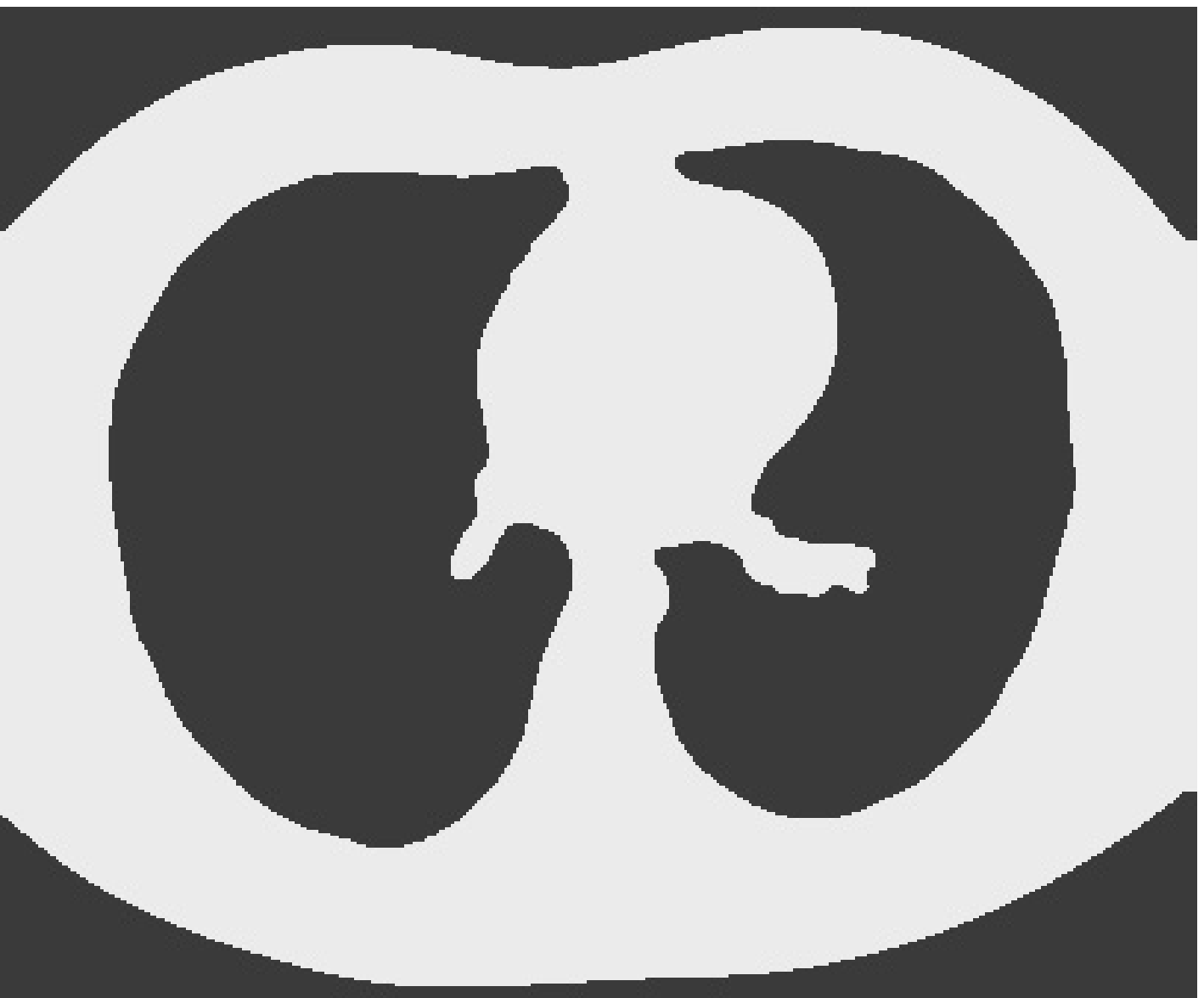}\label{fig:7u}}\
\subfloat[N=4]{\includegraphics[width=1.893cm]{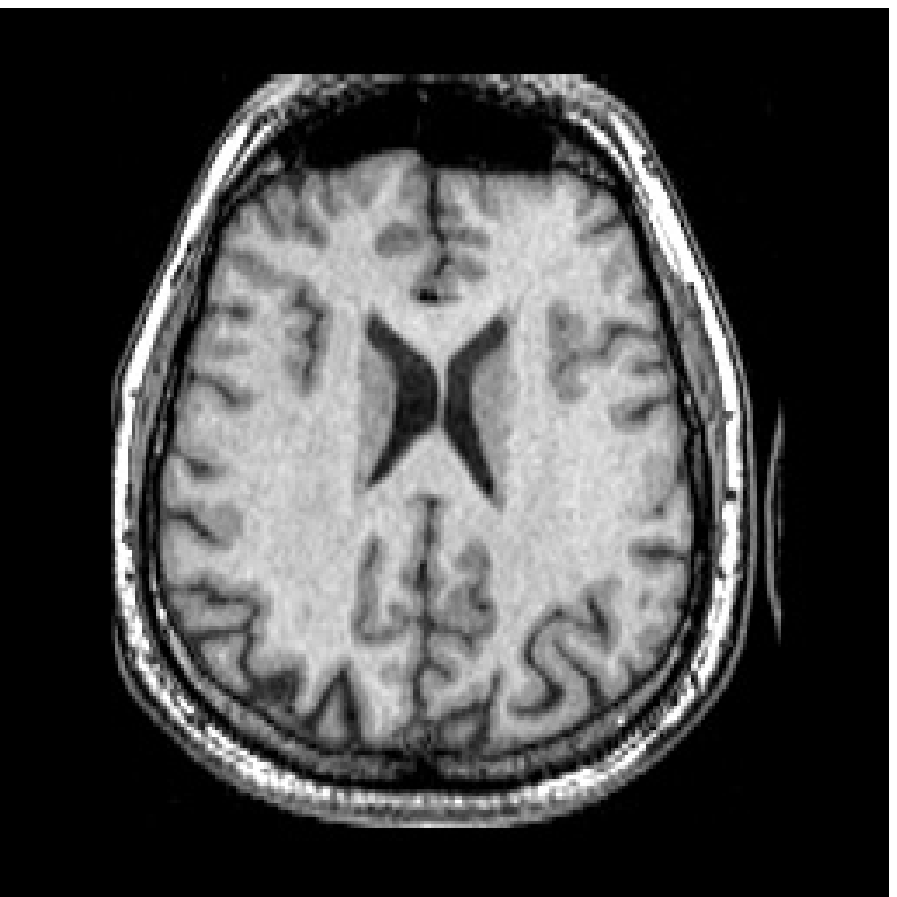}\label{fig:7v}}\
\subfloat[ ]{\includegraphics[width=1.893cm]{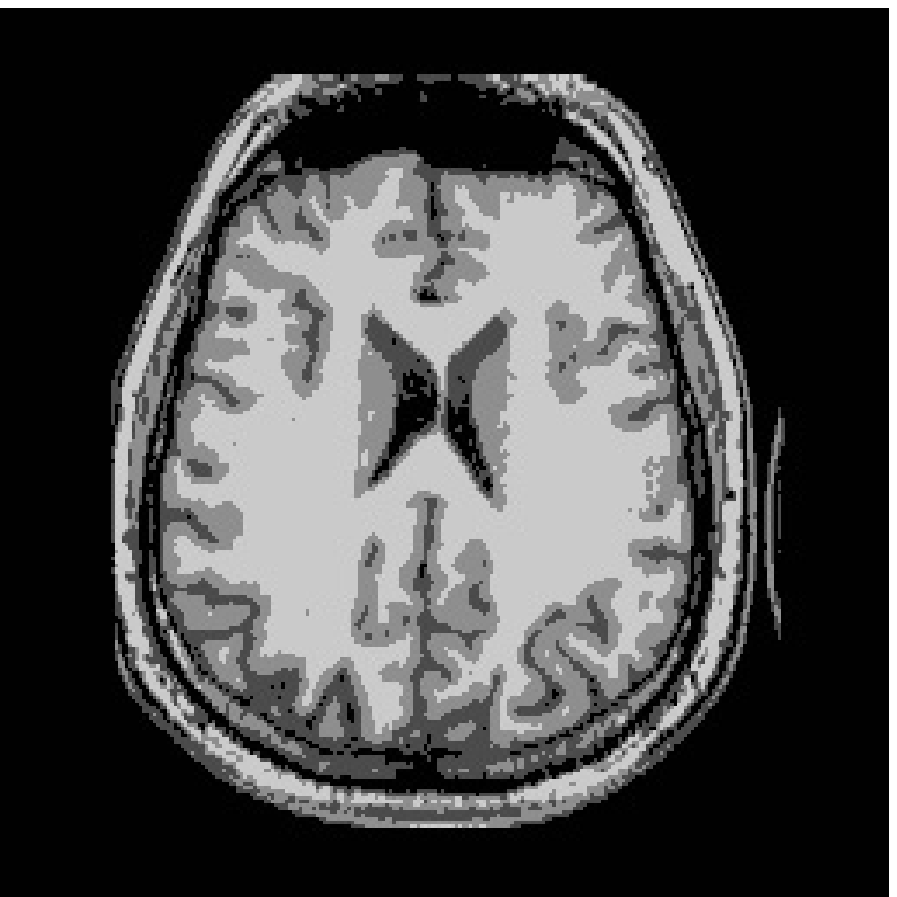}\label{fig:7w}}\
\subfloat[ ]{\includegraphics[width=1.893cm]{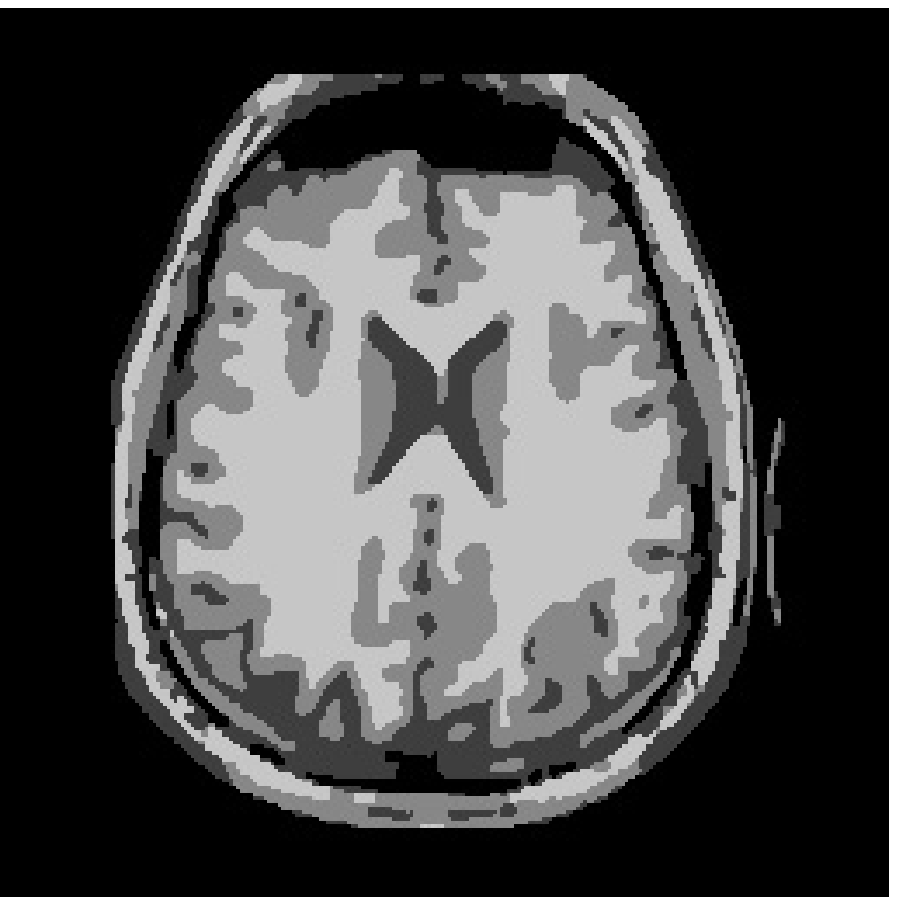}\label{fig:7x}}\\
\subfloat[N=2]{\includegraphics[width=1.893cm]{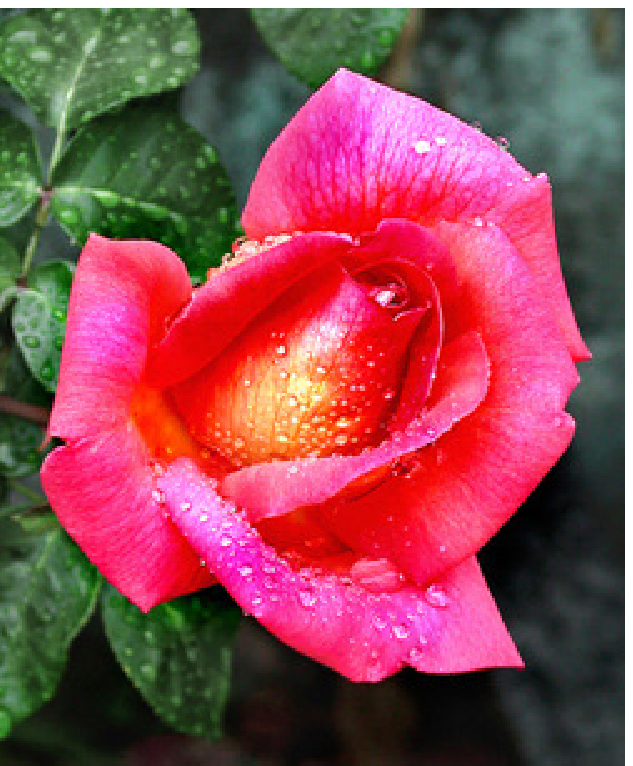}\label{fig:7a}}\
\subfloat[ ]{\includegraphics[width=1.893cm]{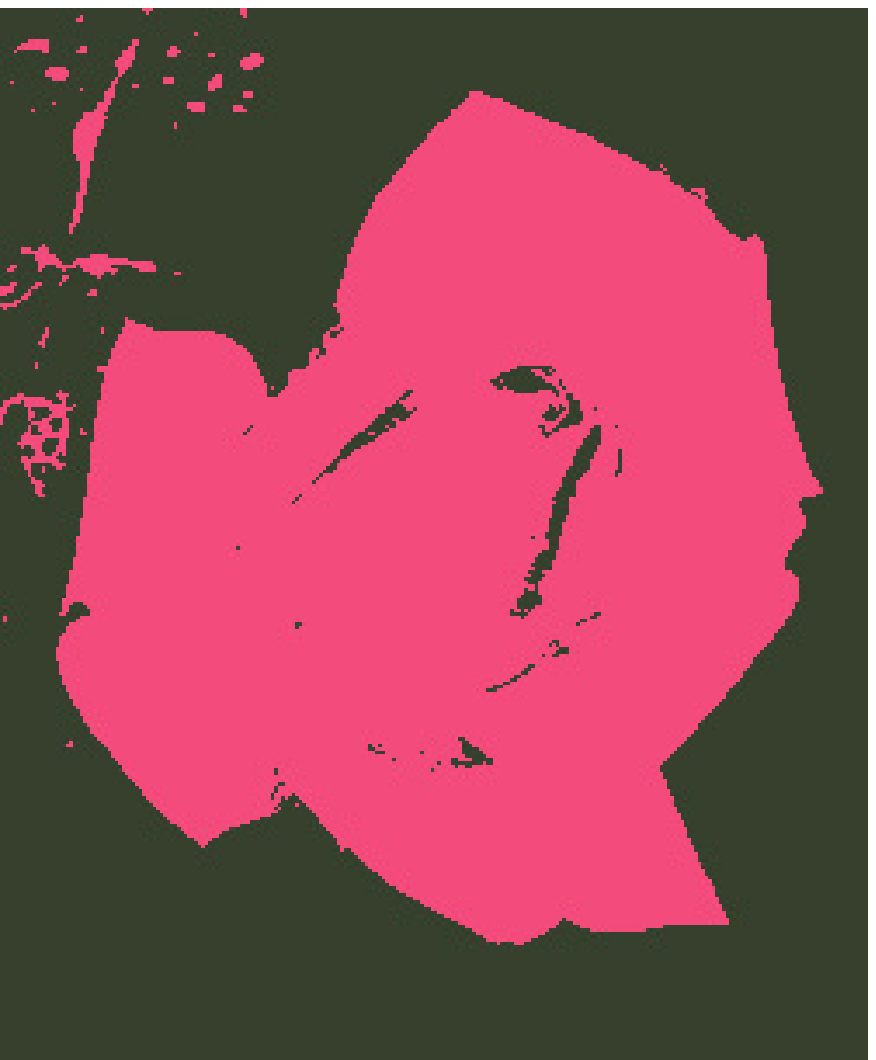}\label{fig:7b}}\
\subfloat[ ]{\includegraphics[width=1.893cm]{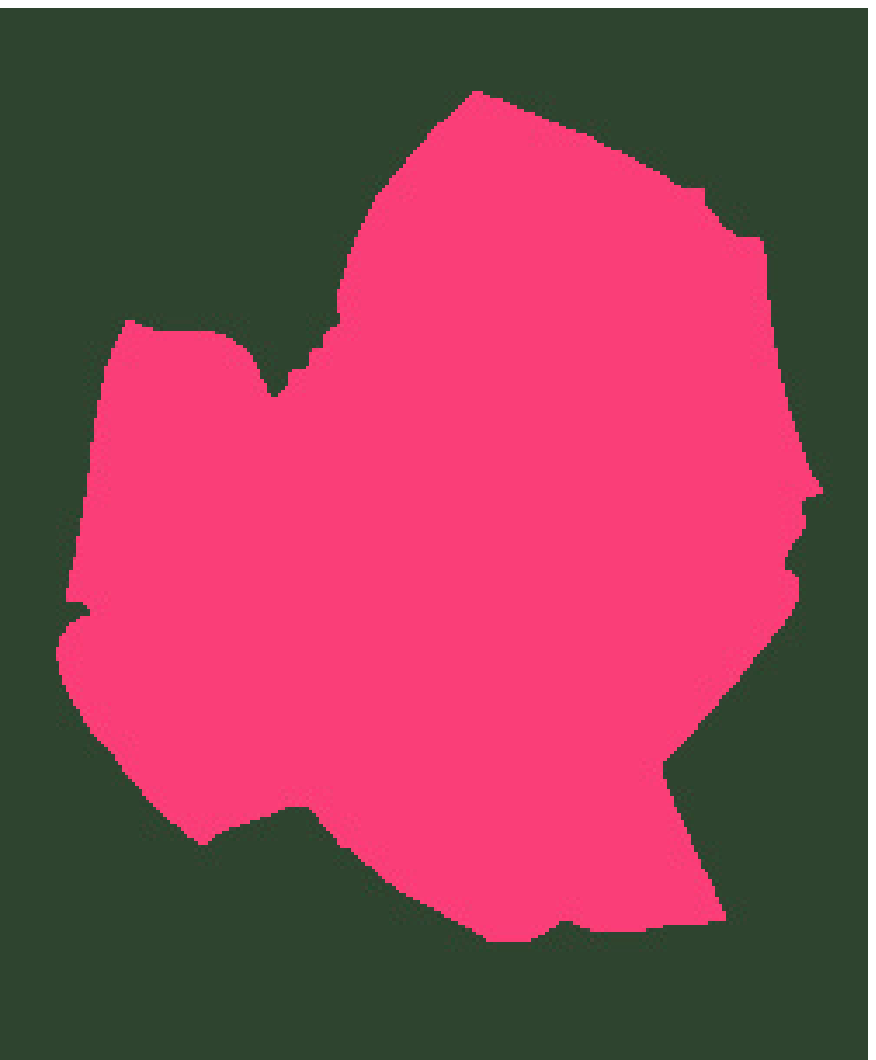}\label{fig:7c}}\
\subfloat[N=6]{\includegraphics[width=1.893cm]{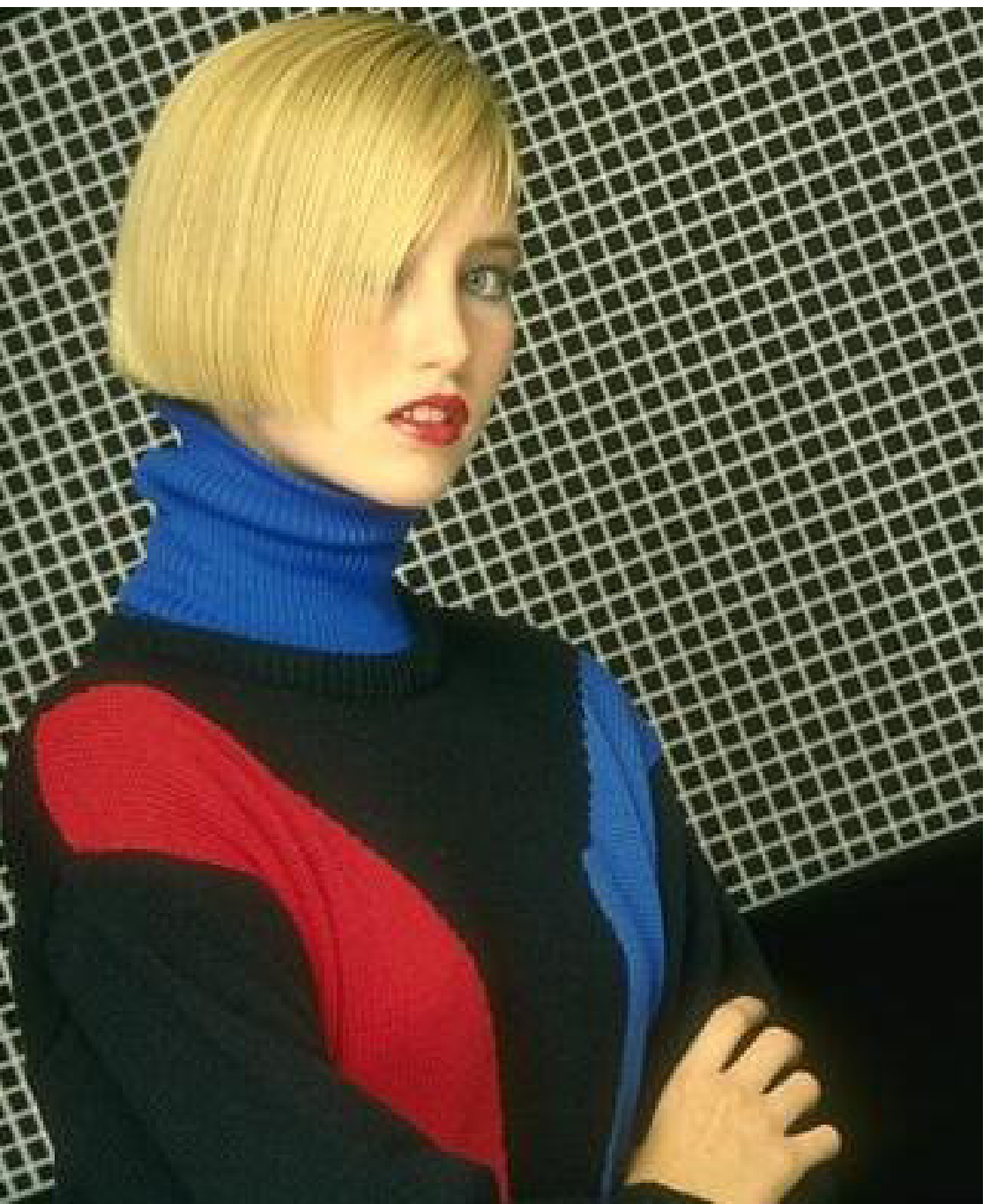}\label{fig:7d}}\
\subfloat[ ]{\includegraphics[width=1.893cm]{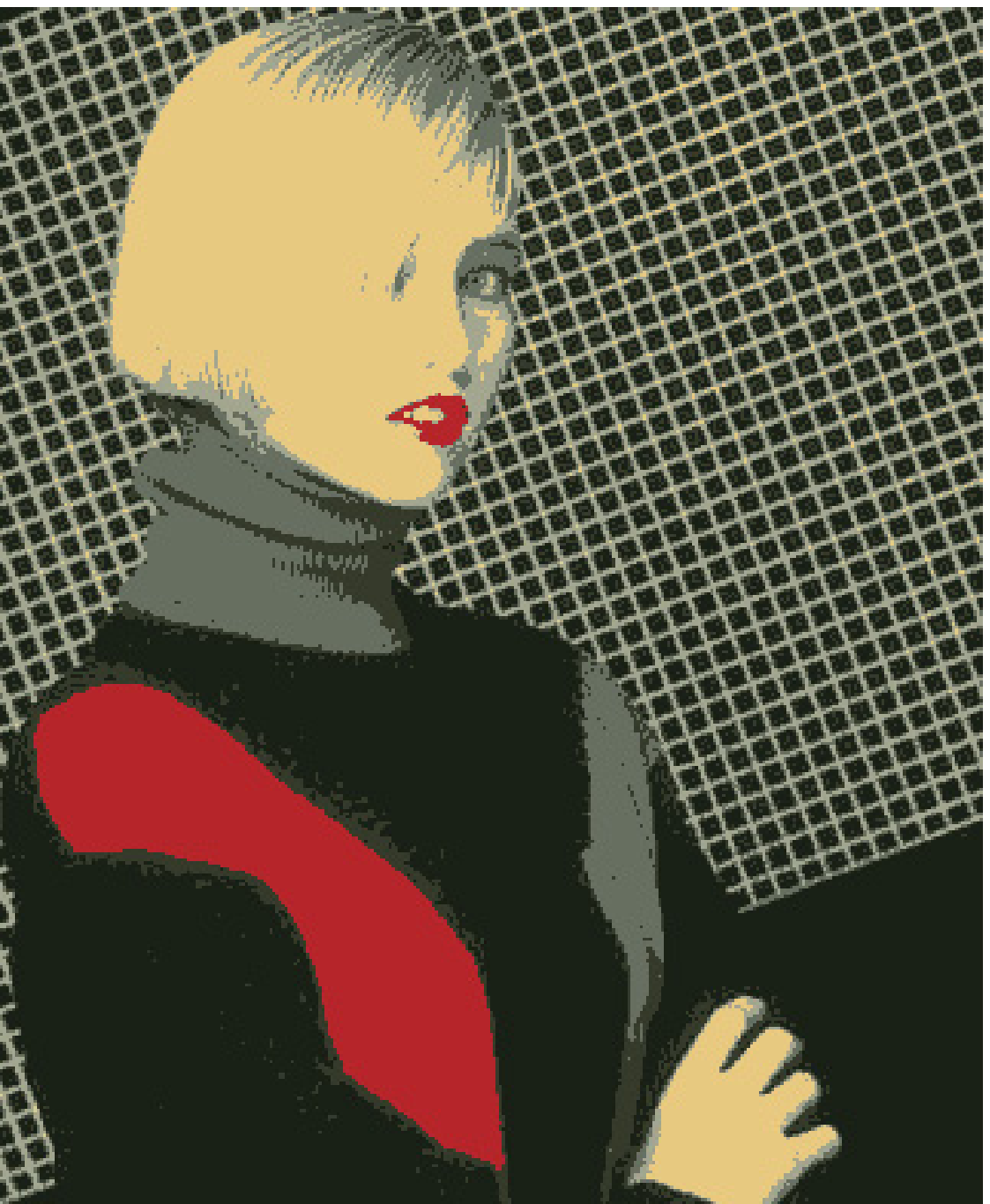}\label{fig:7e}}\
\subfloat[ ]{\includegraphics[width=1.893cm]{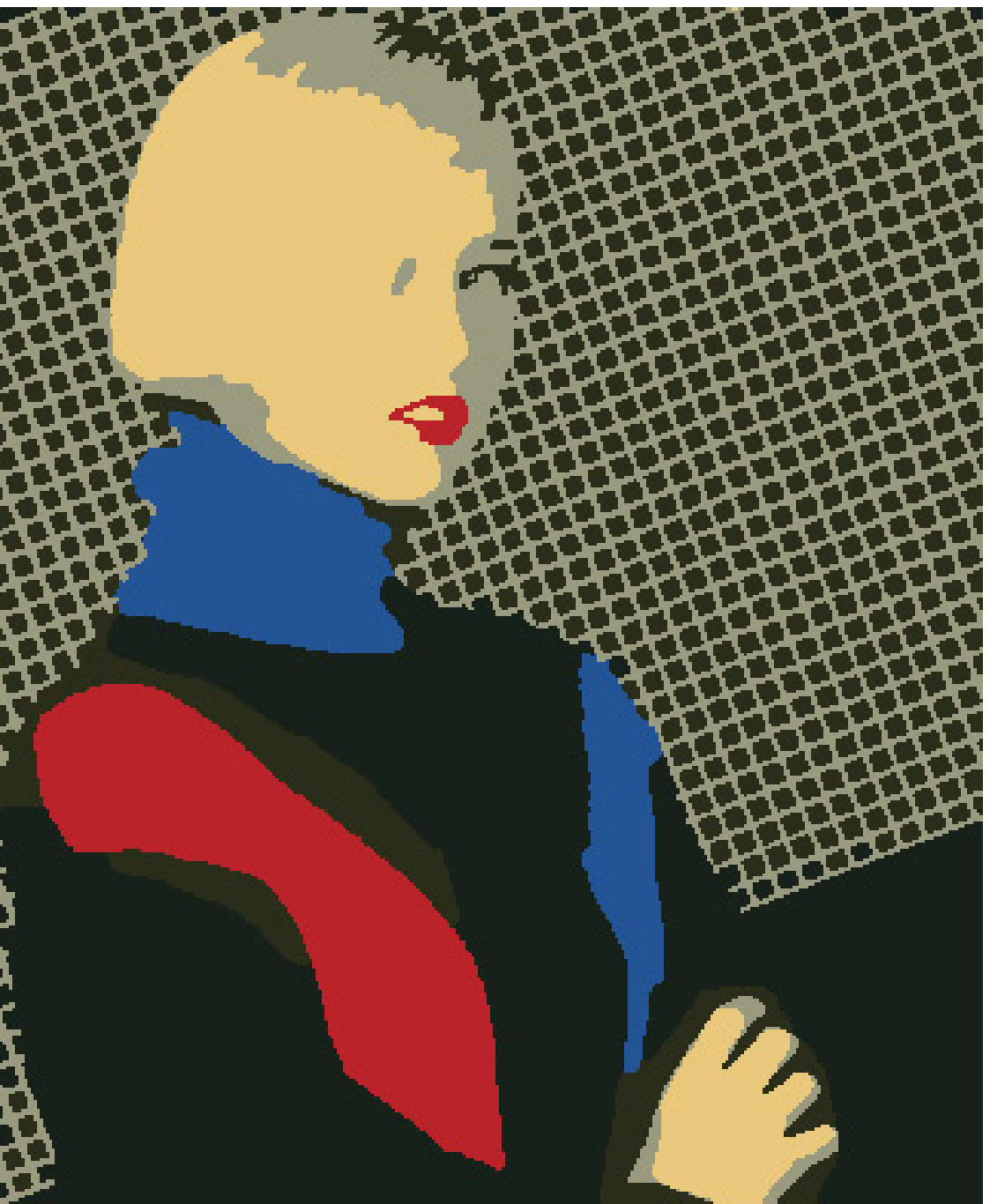}\label{fig:7f}}\\
\subfloat[N=8]{\includegraphics[width=1.893cm]{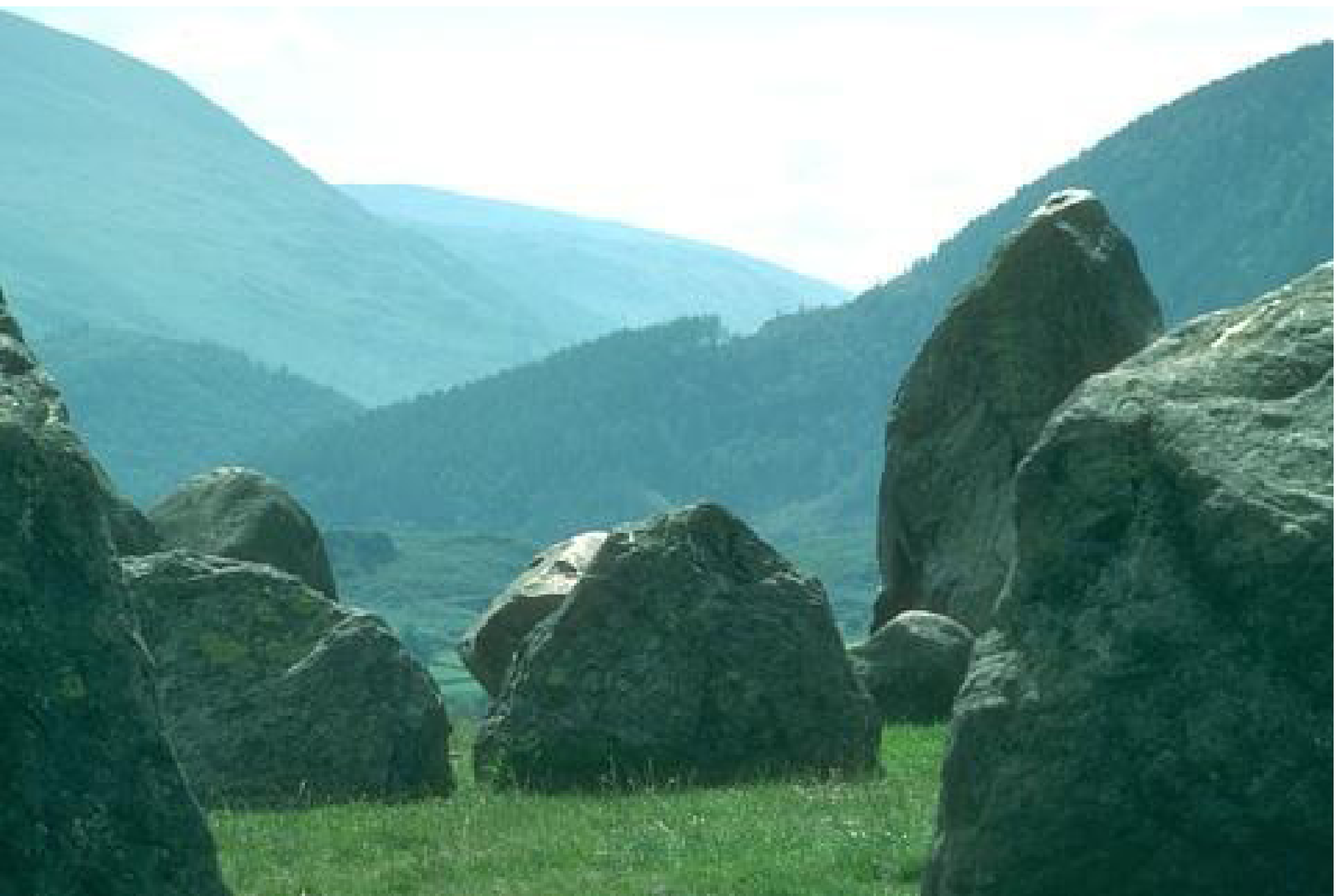}\label{fig:7g}}\
\subfloat[ ]{\includegraphics[width=1.893cm]{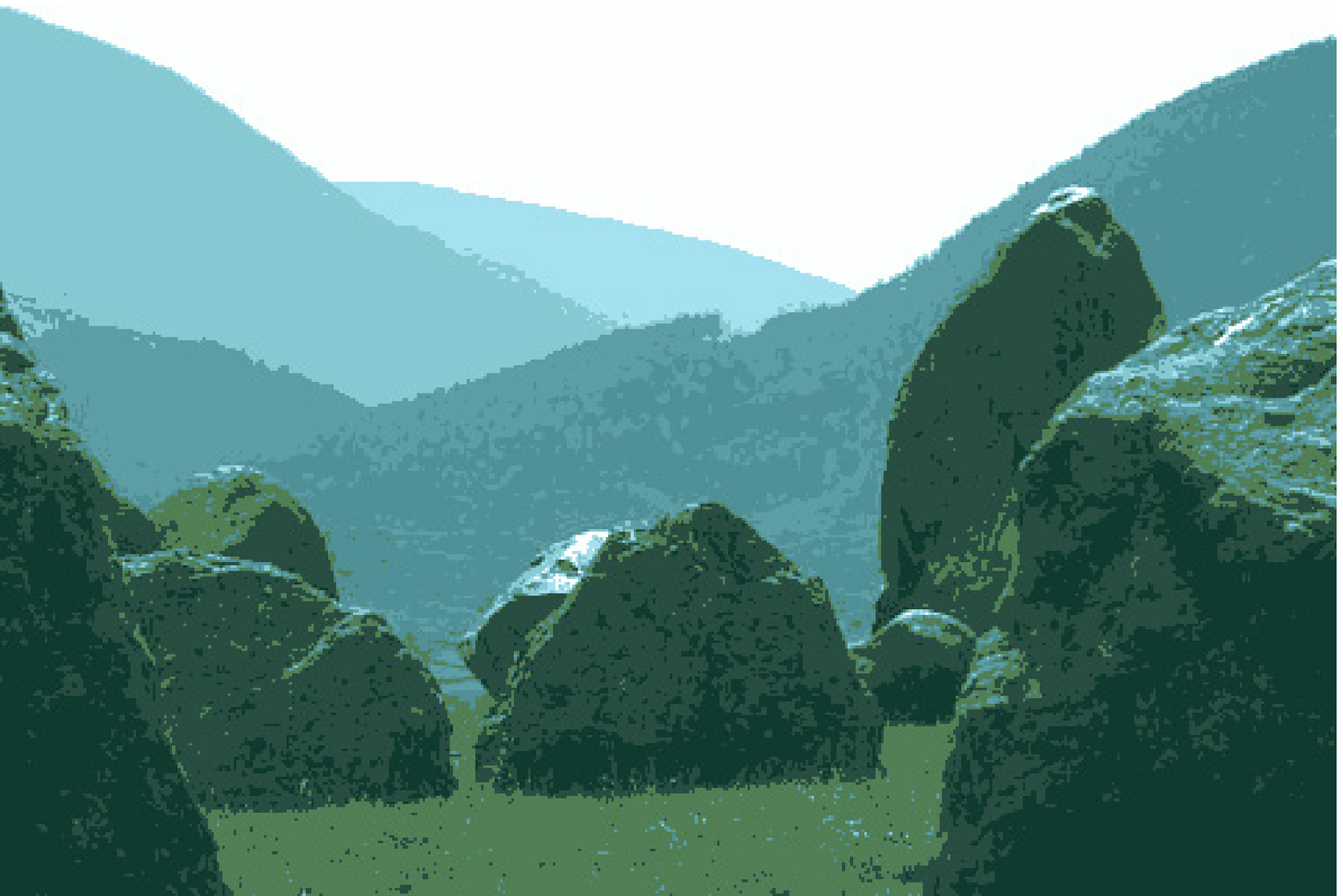}\label{fig:7h}}\
\subfloat[ ]{\includegraphics[width=1.893cm]{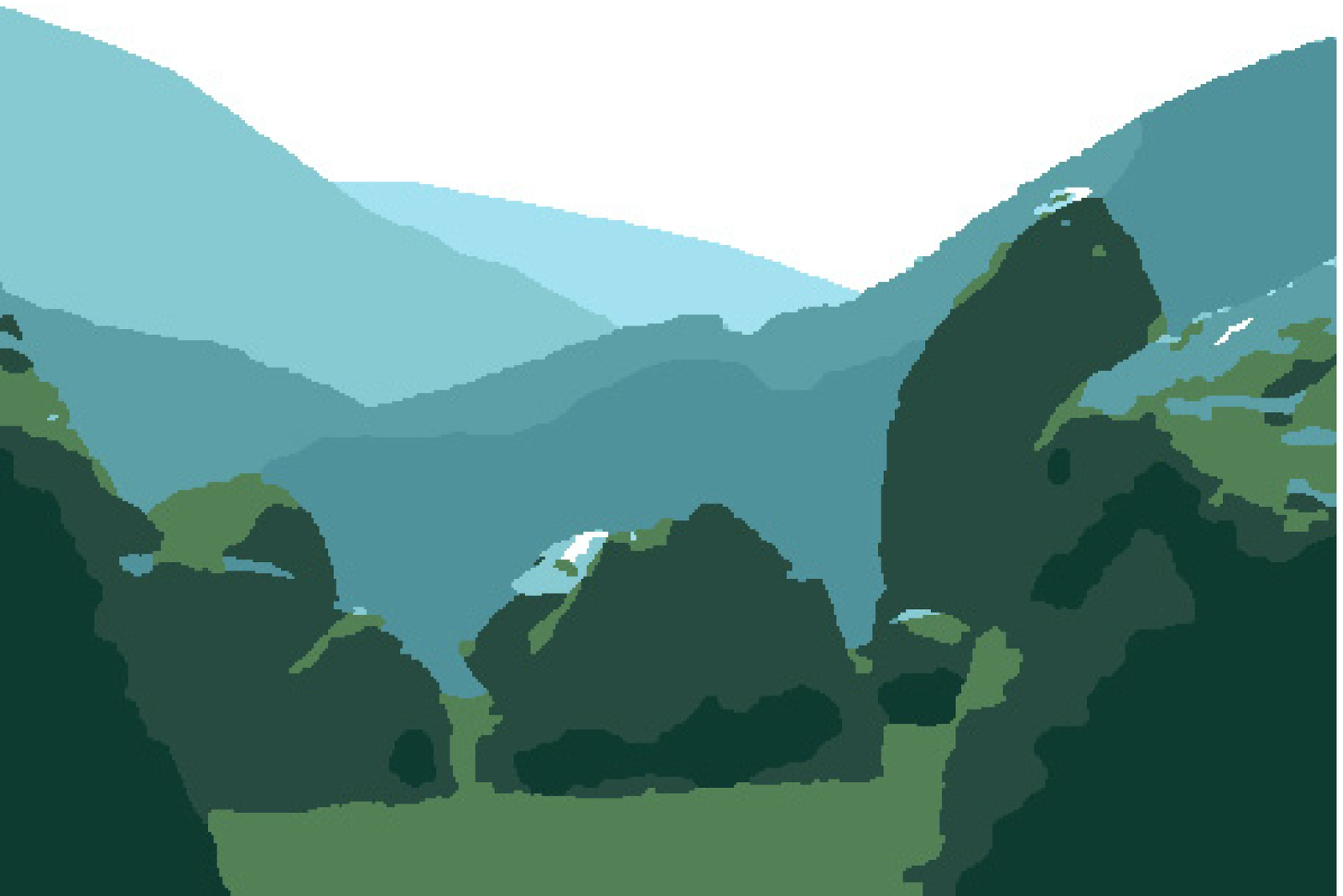}\label{fig:7i}}\
\subfloat[N=4]{\includegraphics[width=1.893cm]{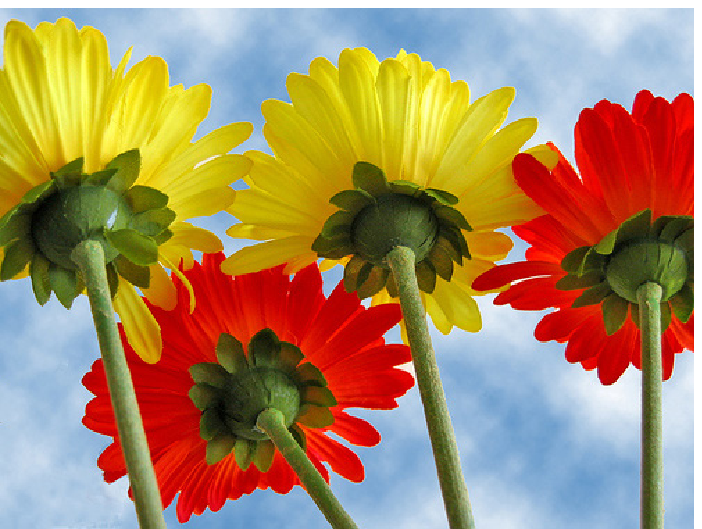}\label{fig:7j}}\
\subfloat[ ]{\includegraphics[width=1.893cm]{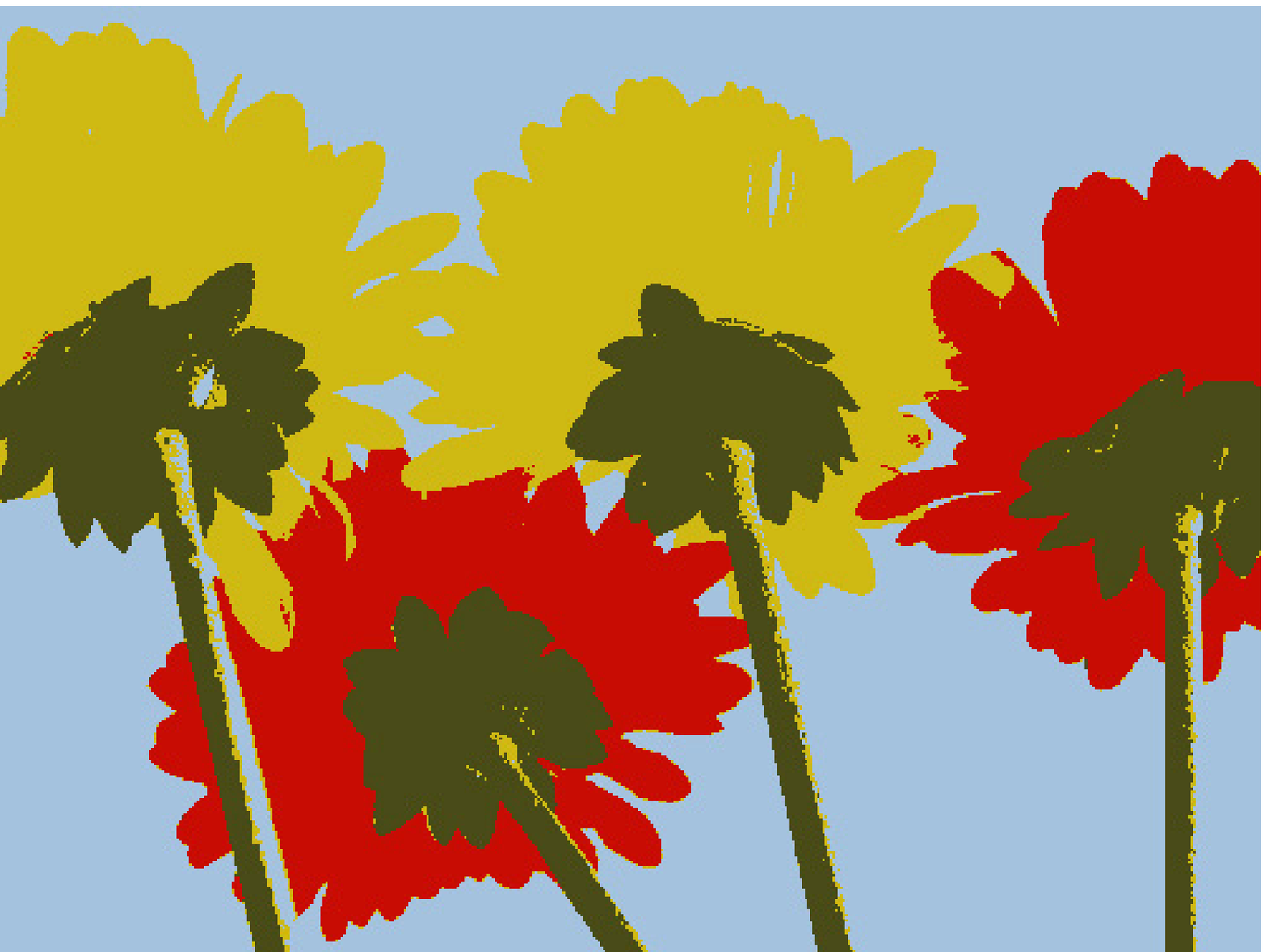}\label{fig:7k}}\
\subfloat[ ]{\includegraphics[width=1.893cm]{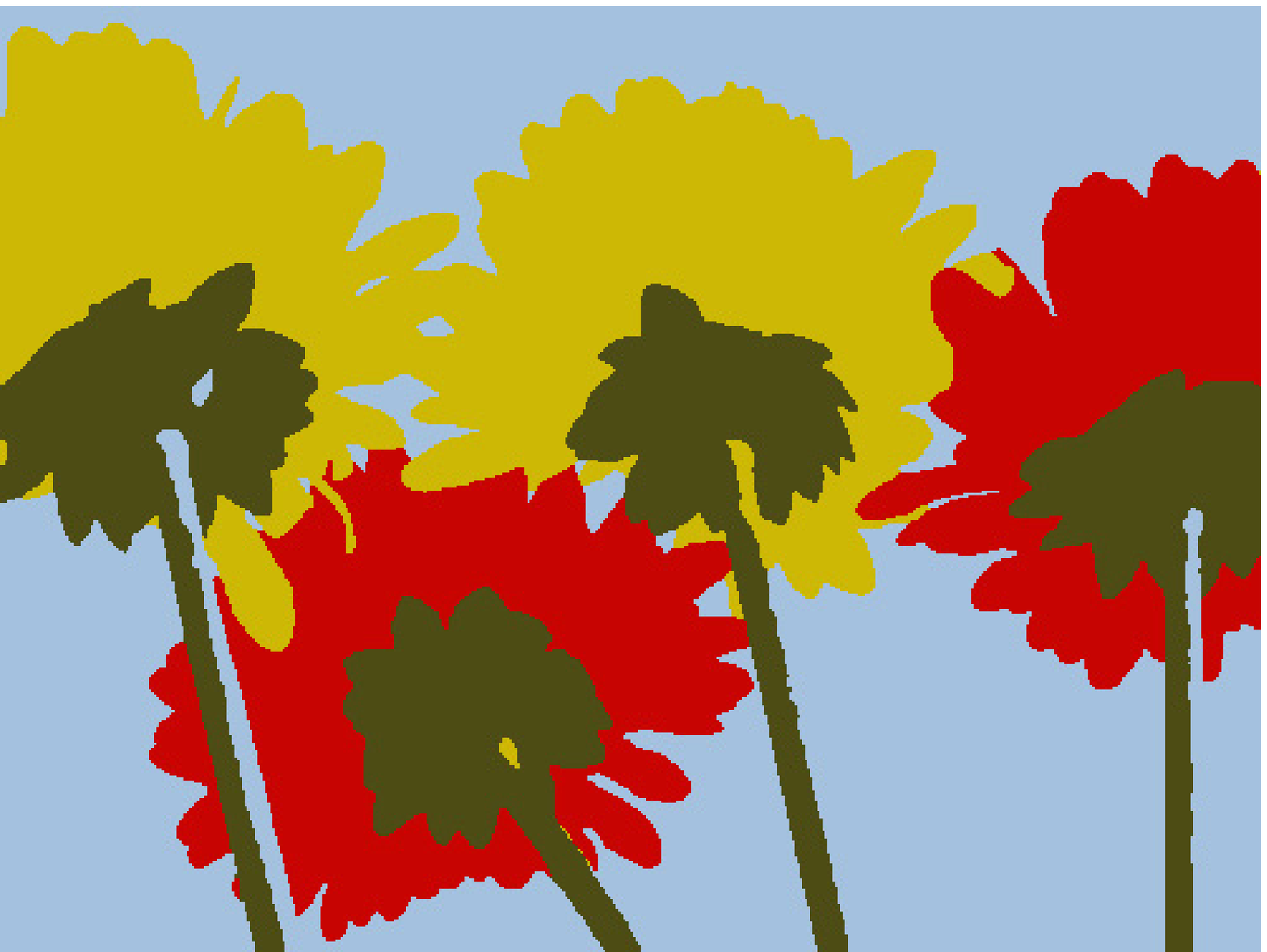}\label{fig:7l}}\\
\subfloat[N=3]{\includegraphics[width=1.893cm]{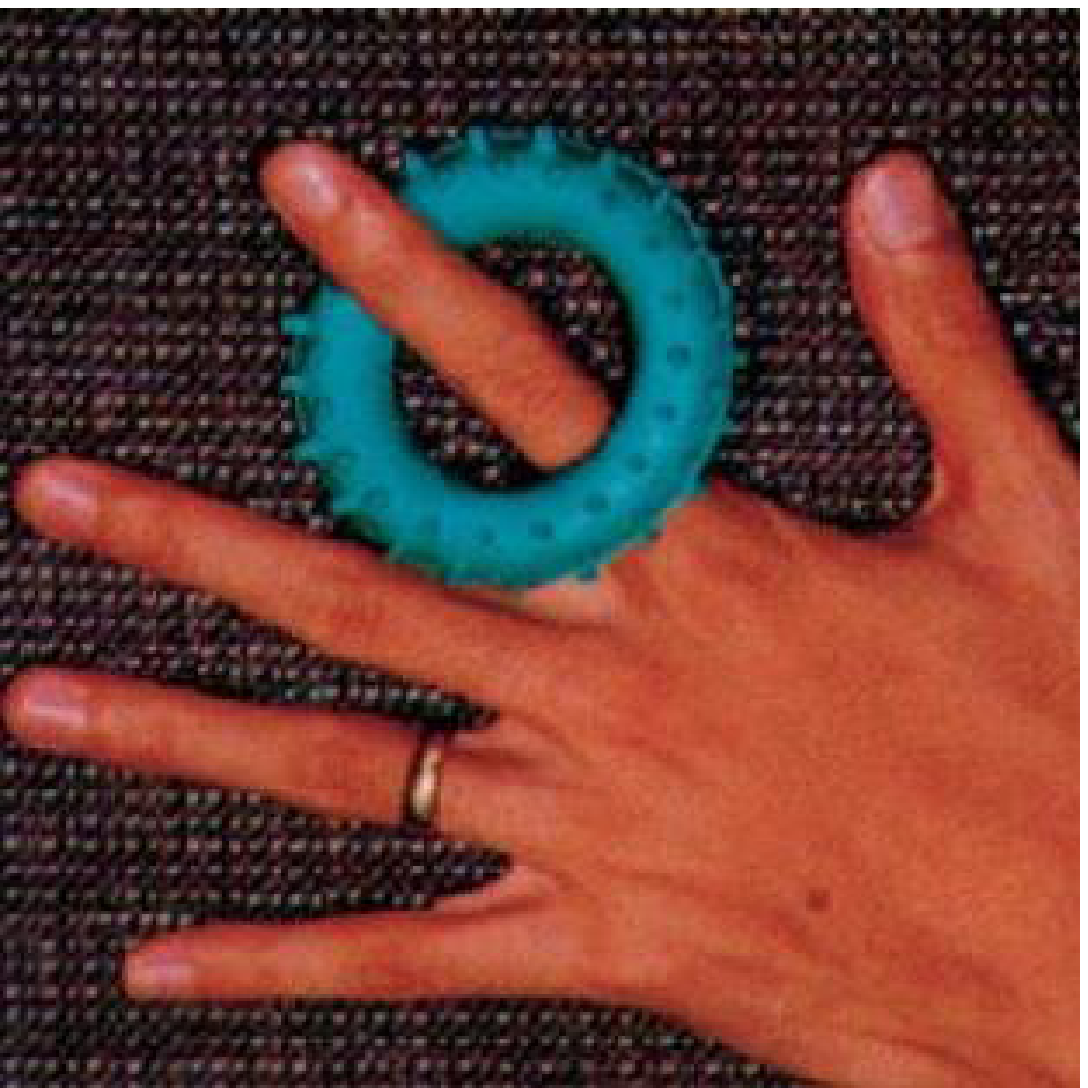}\label{fig:7m}}\
\subfloat[ ]{\includegraphics[width=1.893cm]{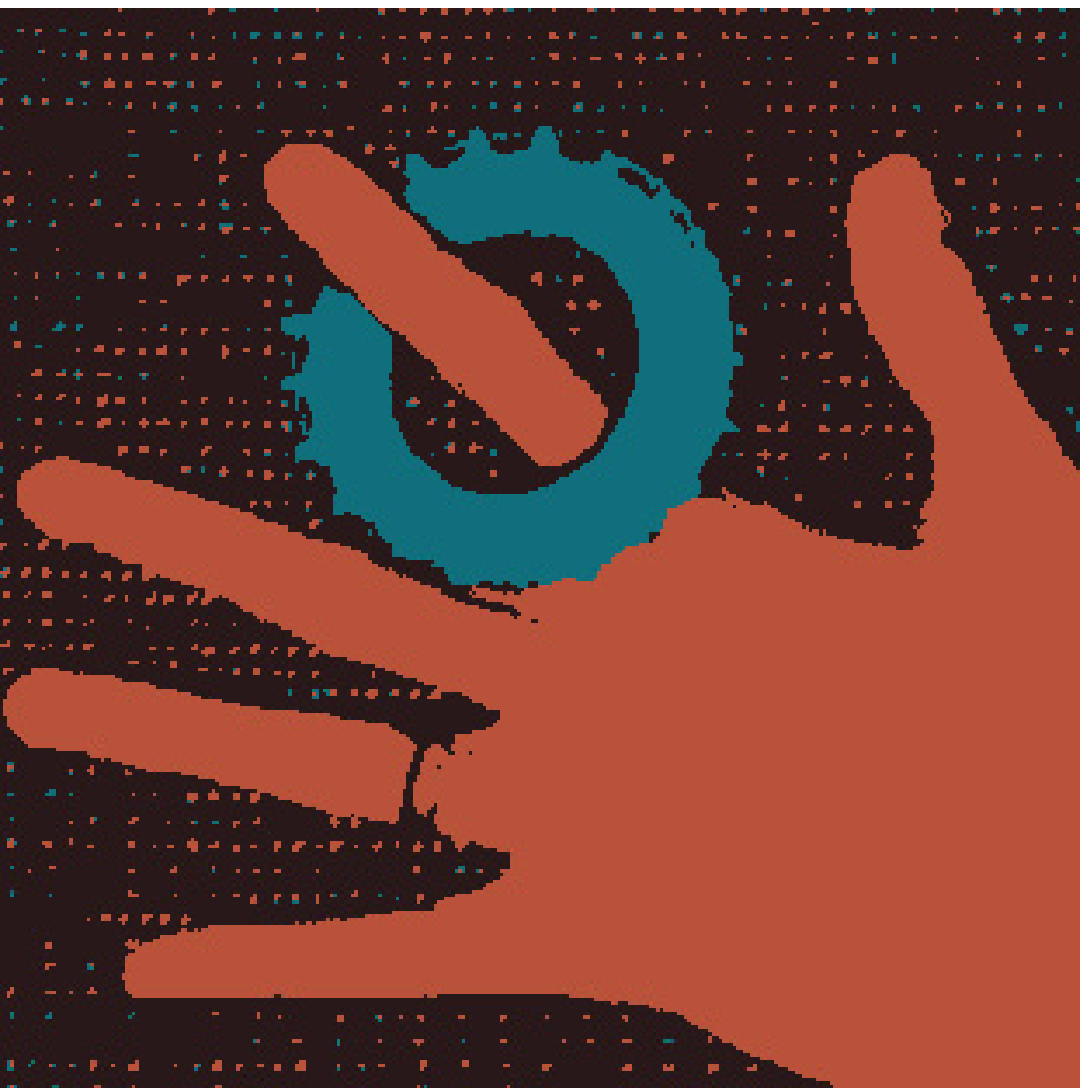}\label{fig:7n}}\
\subfloat[ ]{\includegraphics[width=1.893cm]{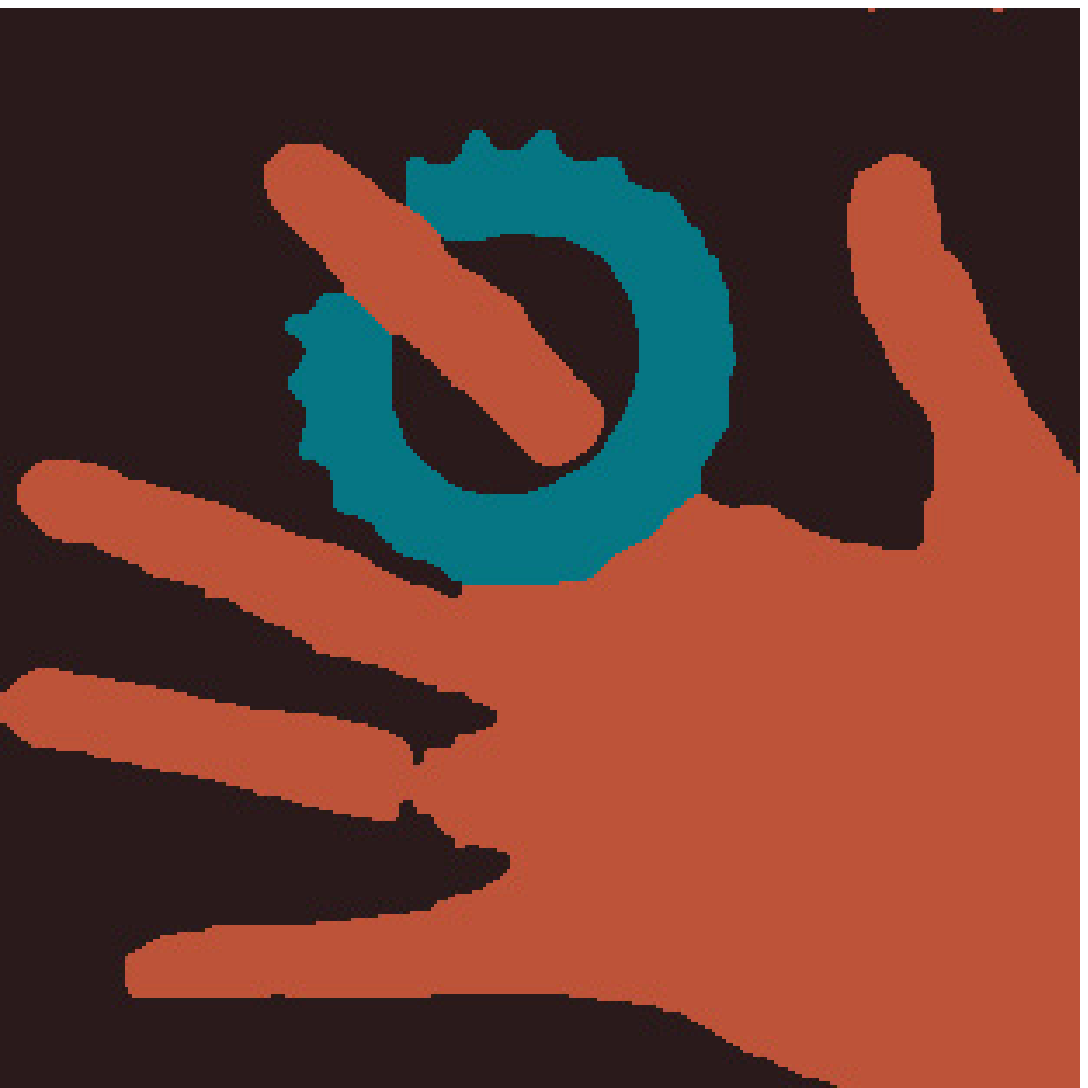}\label{fig:7o}}\
\subfloat[N=6]{\includegraphics[width=1.893cm]{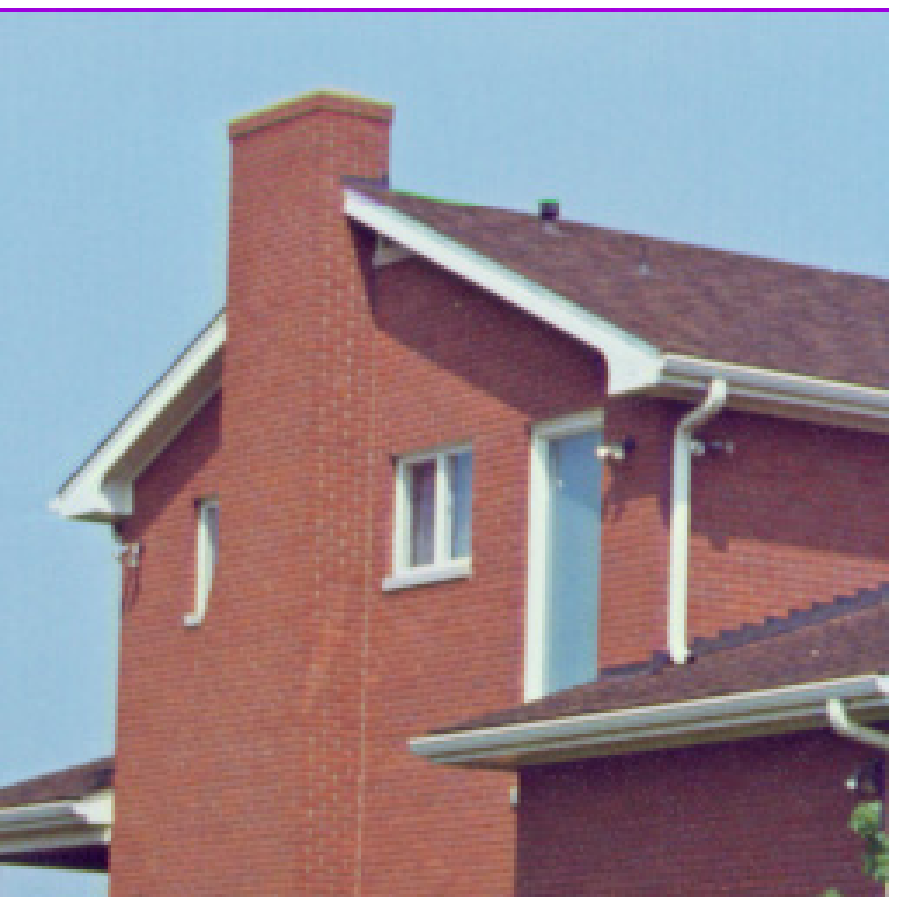}\label{fig:7p}}\
\subfloat[ ]{\includegraphics[width=1.893cm]{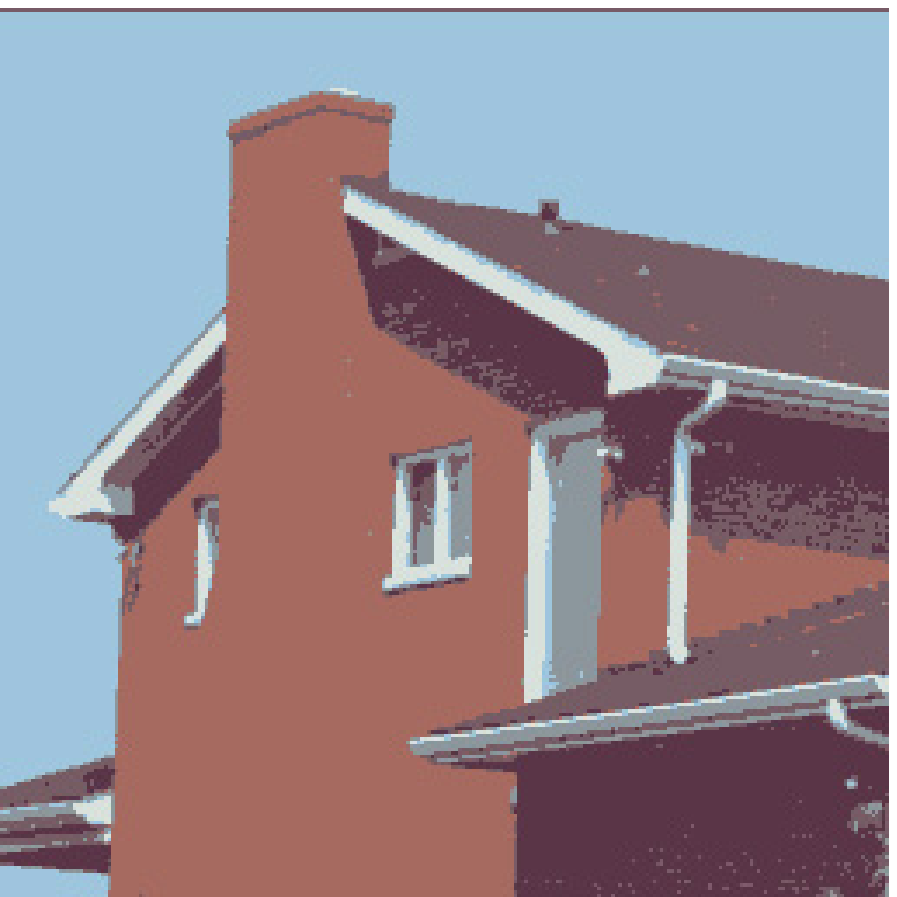}\label{fig:7q}}\
\subfloat[ ]{\includegraphics[width=1.893cm]{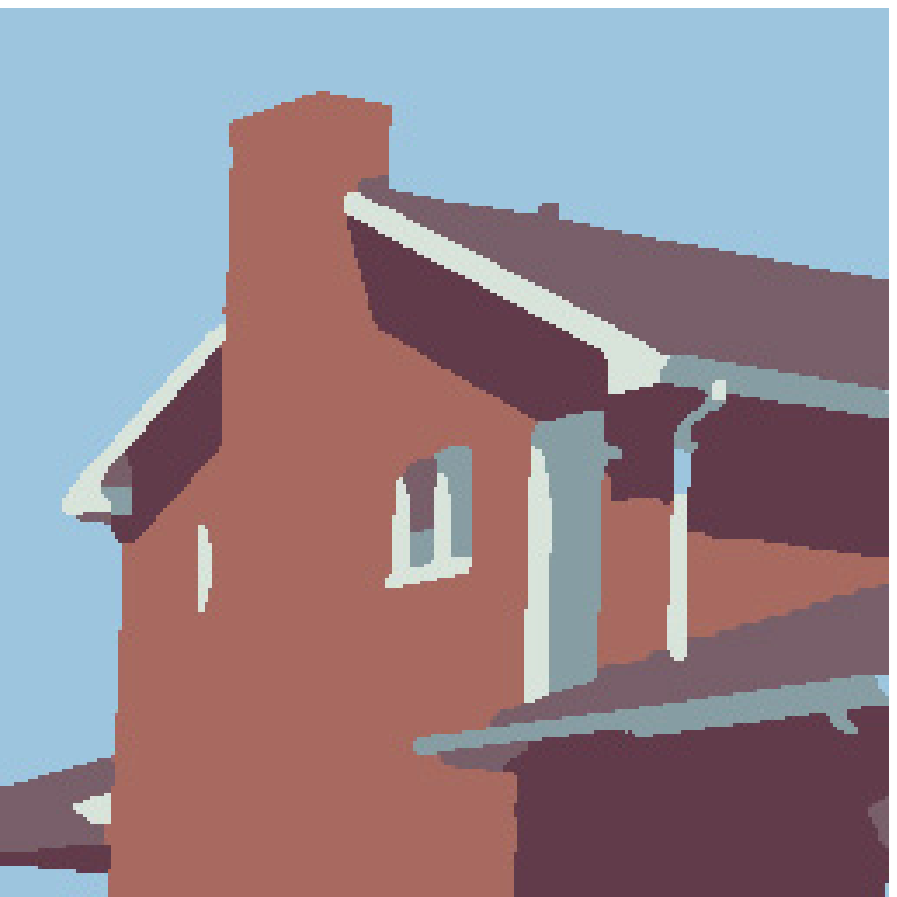}\label{fig:7r}}
\end{center}
\caption{Segmentation on real images. First column and fourth column: real color images;
Second column and fifth column: results of FCM; Third column and last column: results of
L1FS.}\label{fig:7}
\end{figure}

\begin{figure}[!htbp]\label{cluster}%
\begin{center}
\subfloat[ ]{\includegraphics[width=1.893cm]{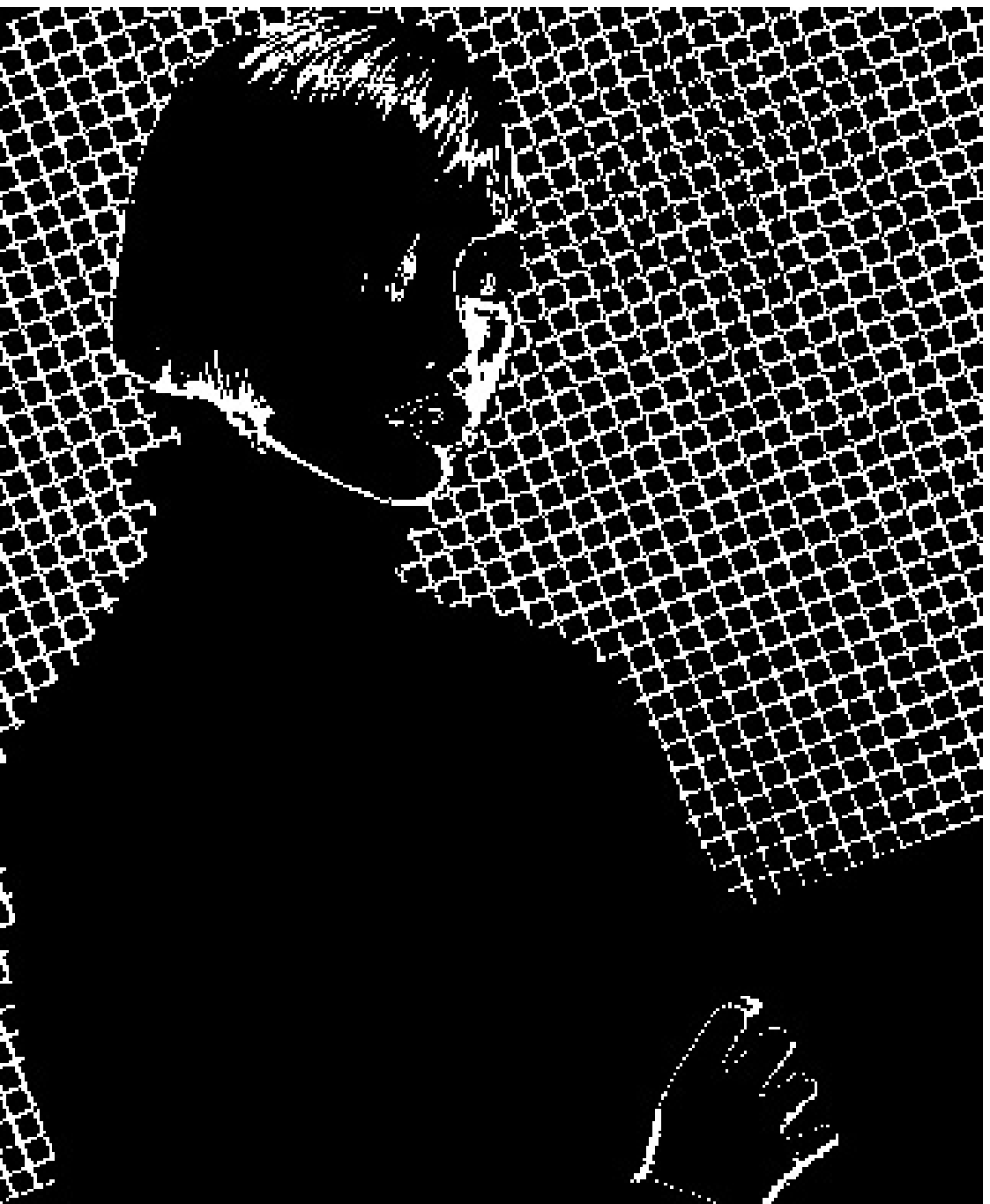}\label{fig:8a}}\
\subfloat[ ]{\includegraphics[width=1.893cm]{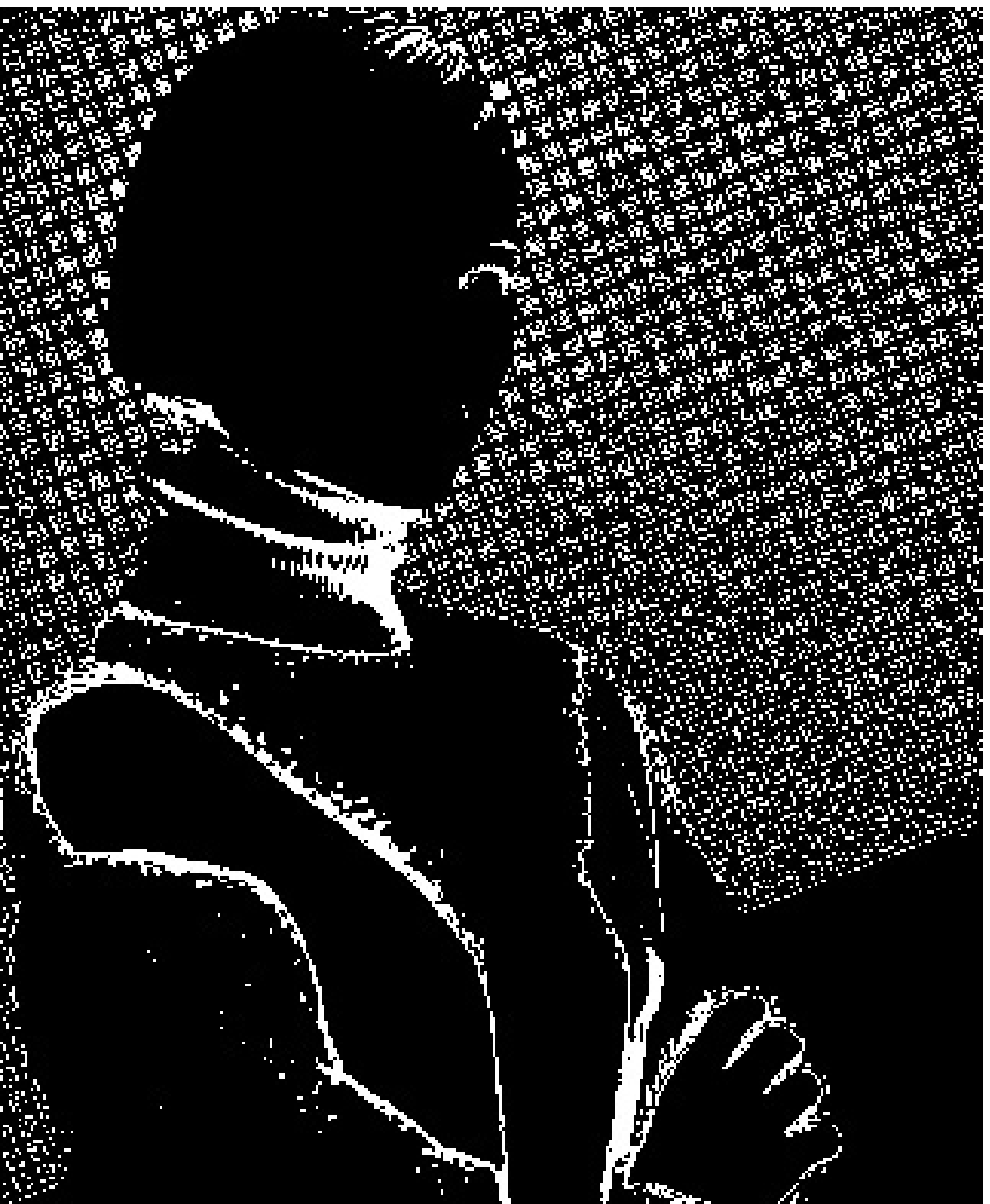}\label{fig:8b}}\
\subfloat[ ]{\includegraphics[width=1.893cm]{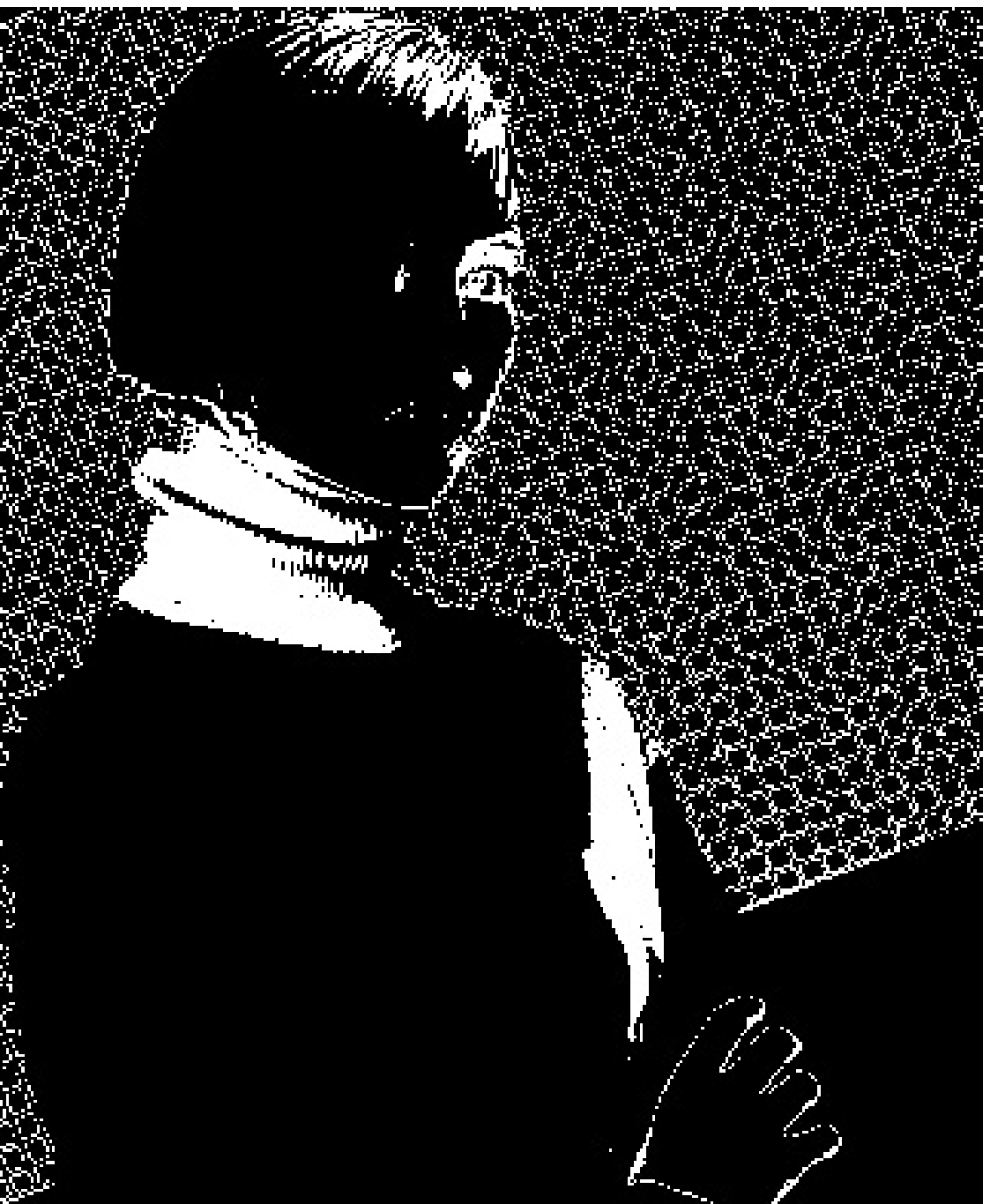}\label{fig:8c}}\
\subfloat[ ]{\includegraphics[width=1.893cm]{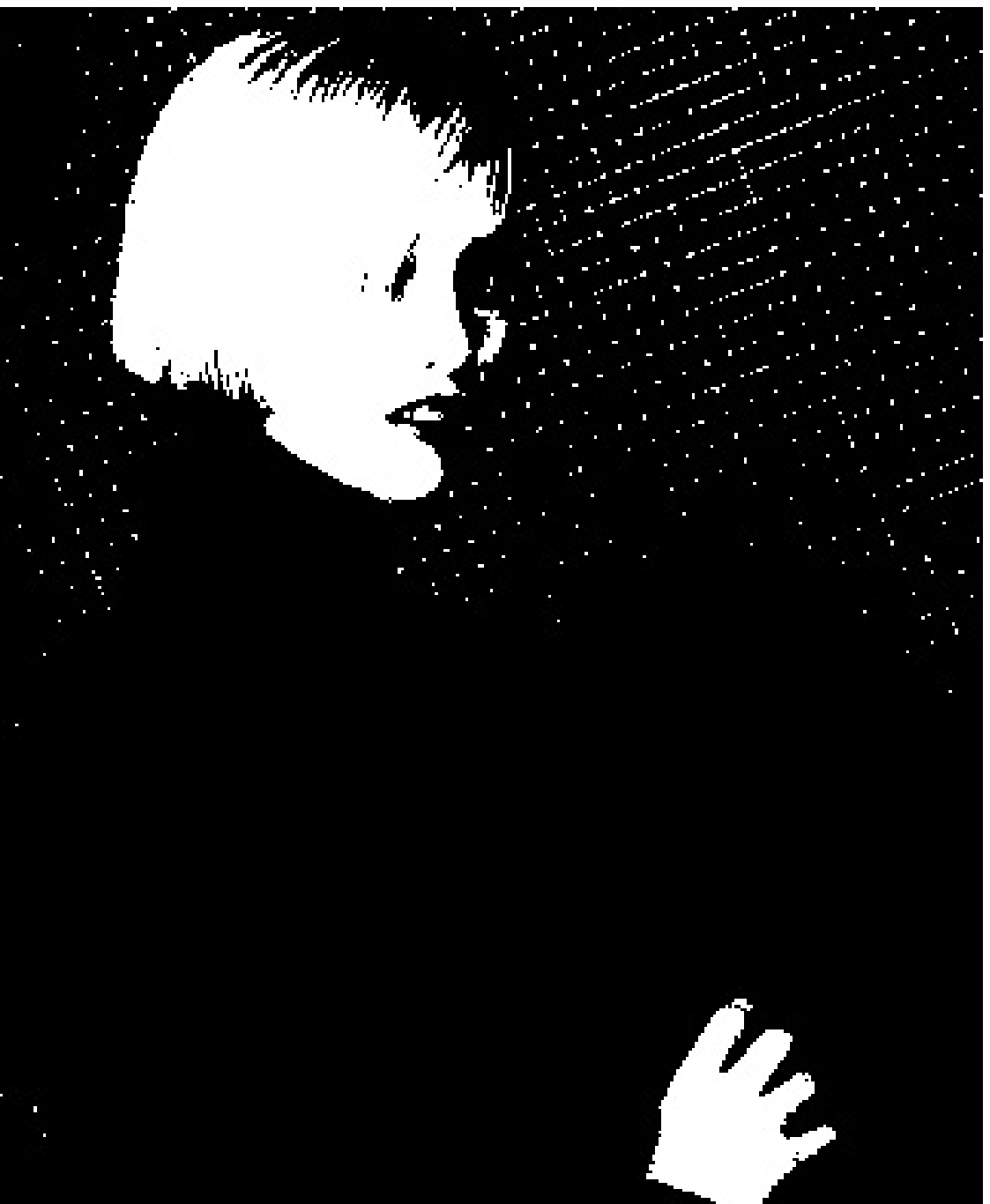}\label{fig:8d}}\
\subfloat[ ]{\includegraphics[width=1.893cm]{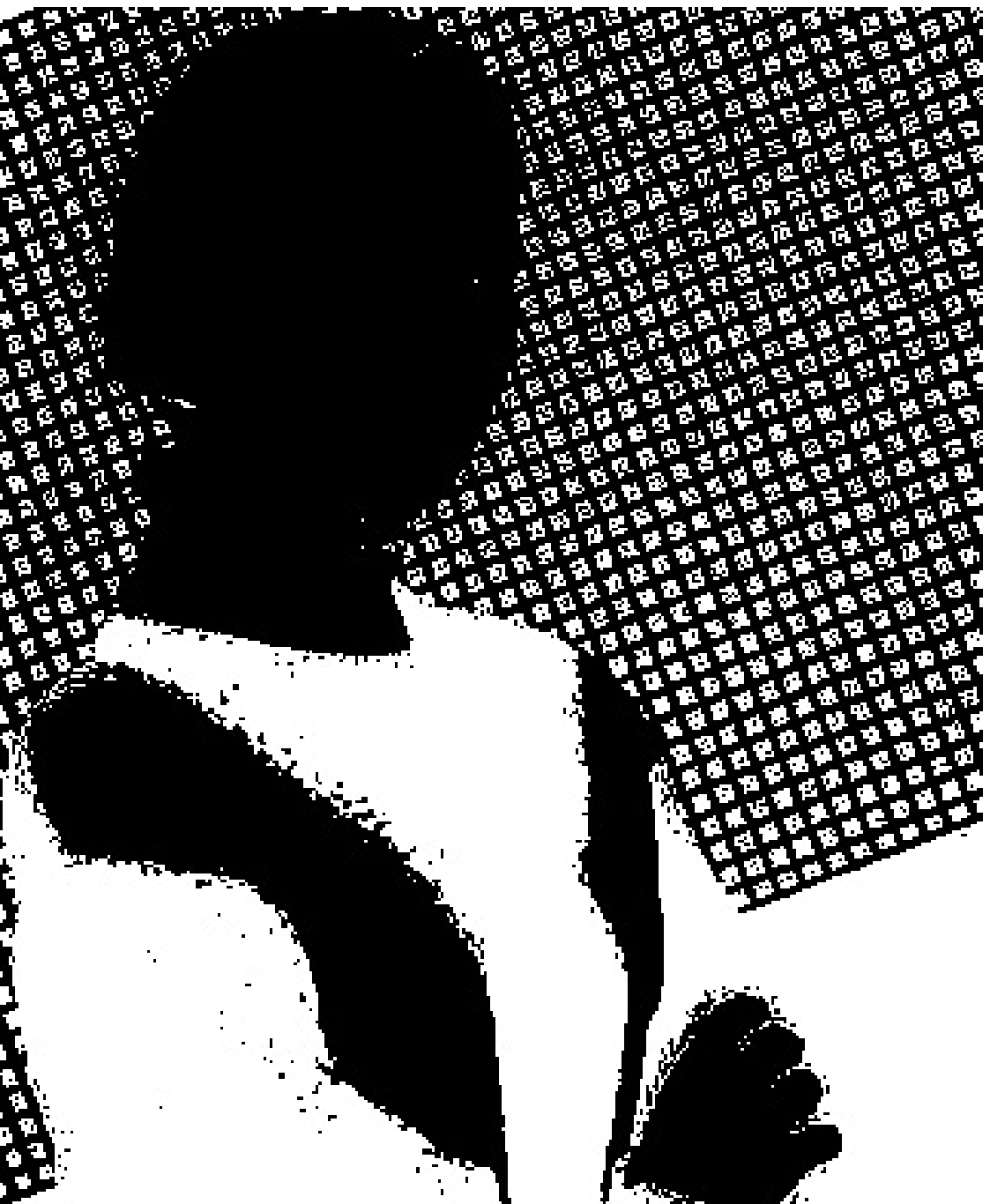}\label{fig:8e}}\
\subfloat[ ]{\includegraphics[width=1.893cm]{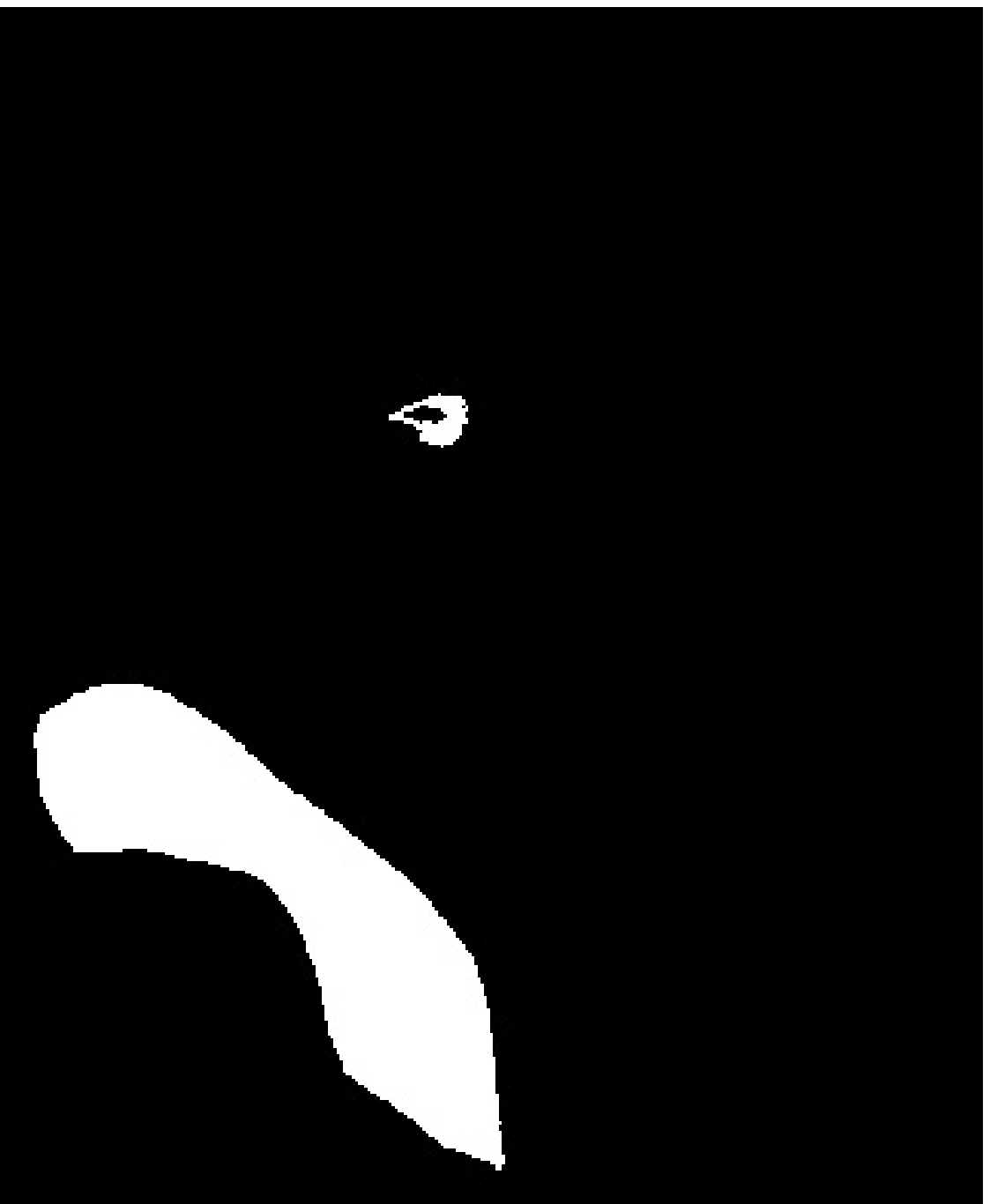}\label{fig:8f}}\\
\subfloat[ ]{\includegraphics[width=1.893cm]{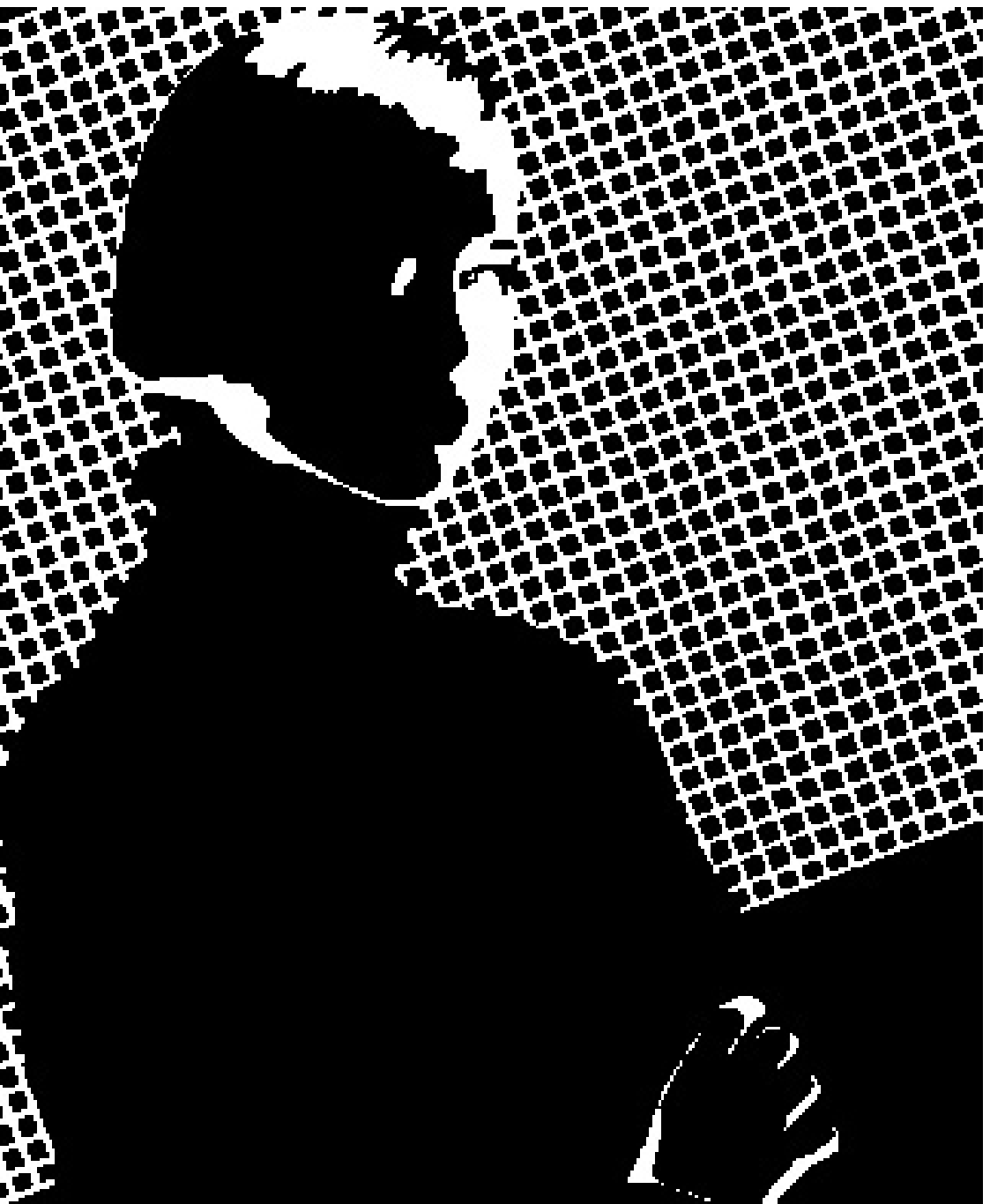}\label{fig:8g}}\
\subfloat[ ]{\includegraphics[width=1.893cm]{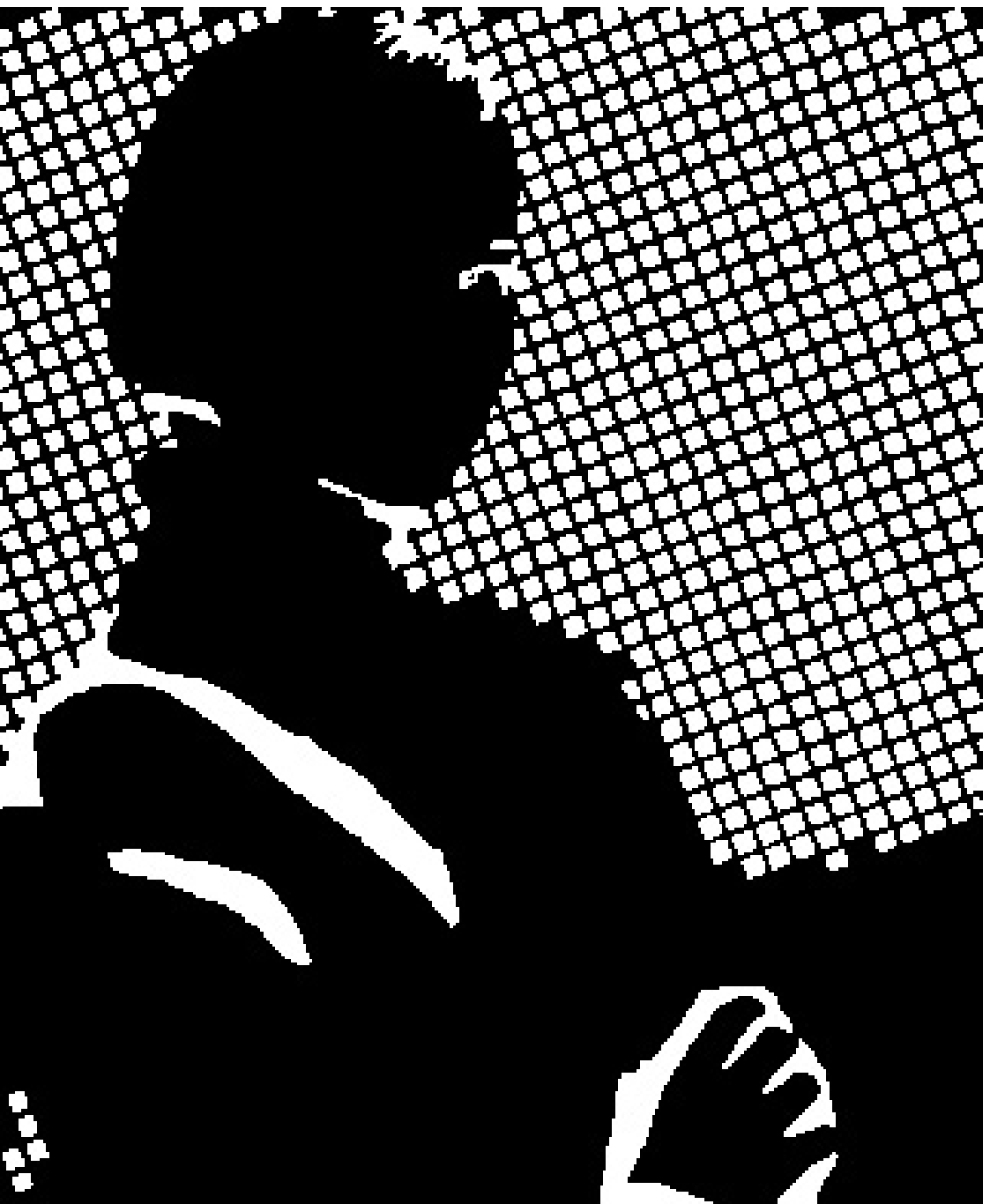}\label{fig:8h}}\
\subfloat[ ]{\includegraphics[width=1.893cm]{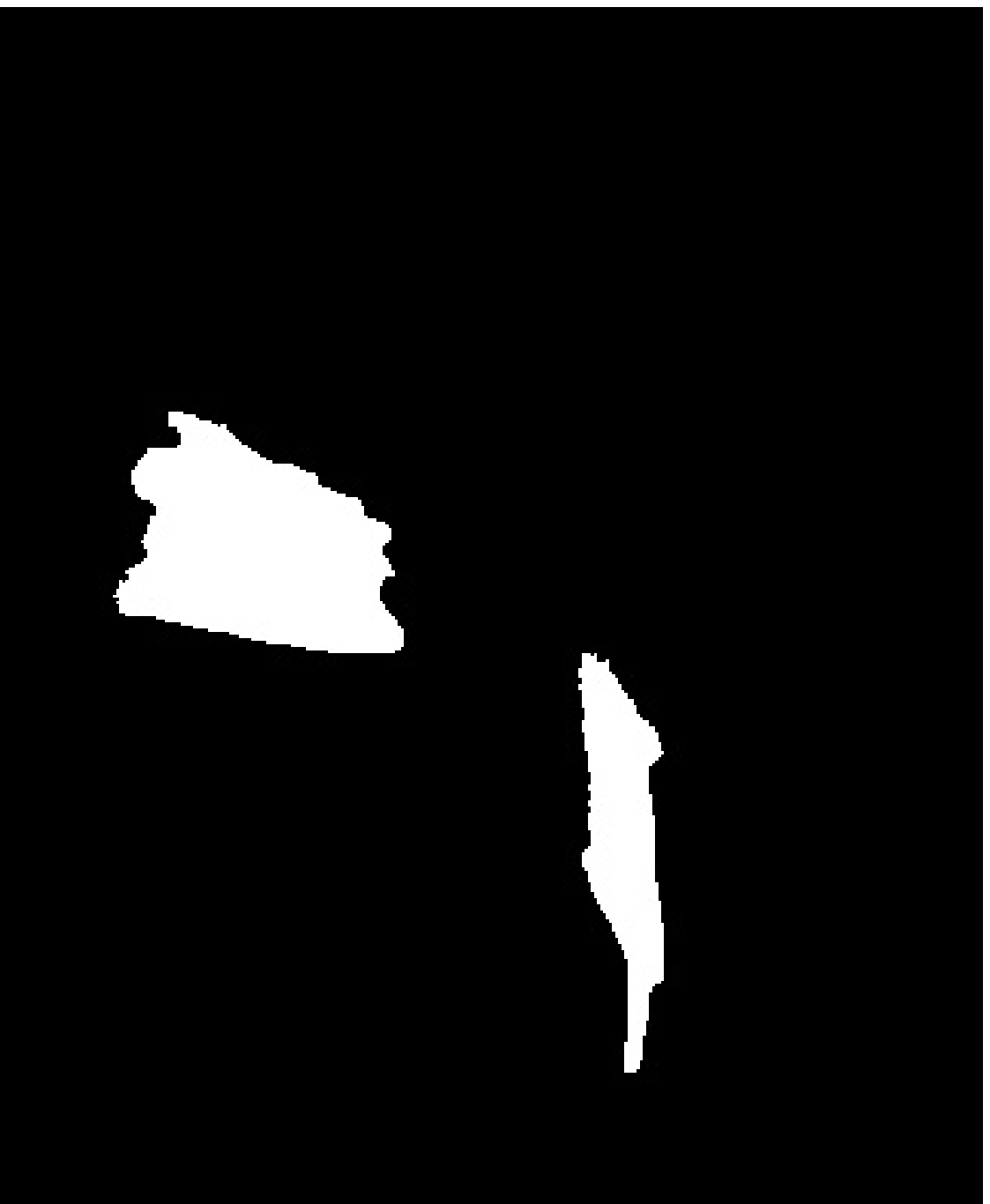}\label{fig:8i}}\
\subfloat[ ]{\includegraphics[width=1.893cm]{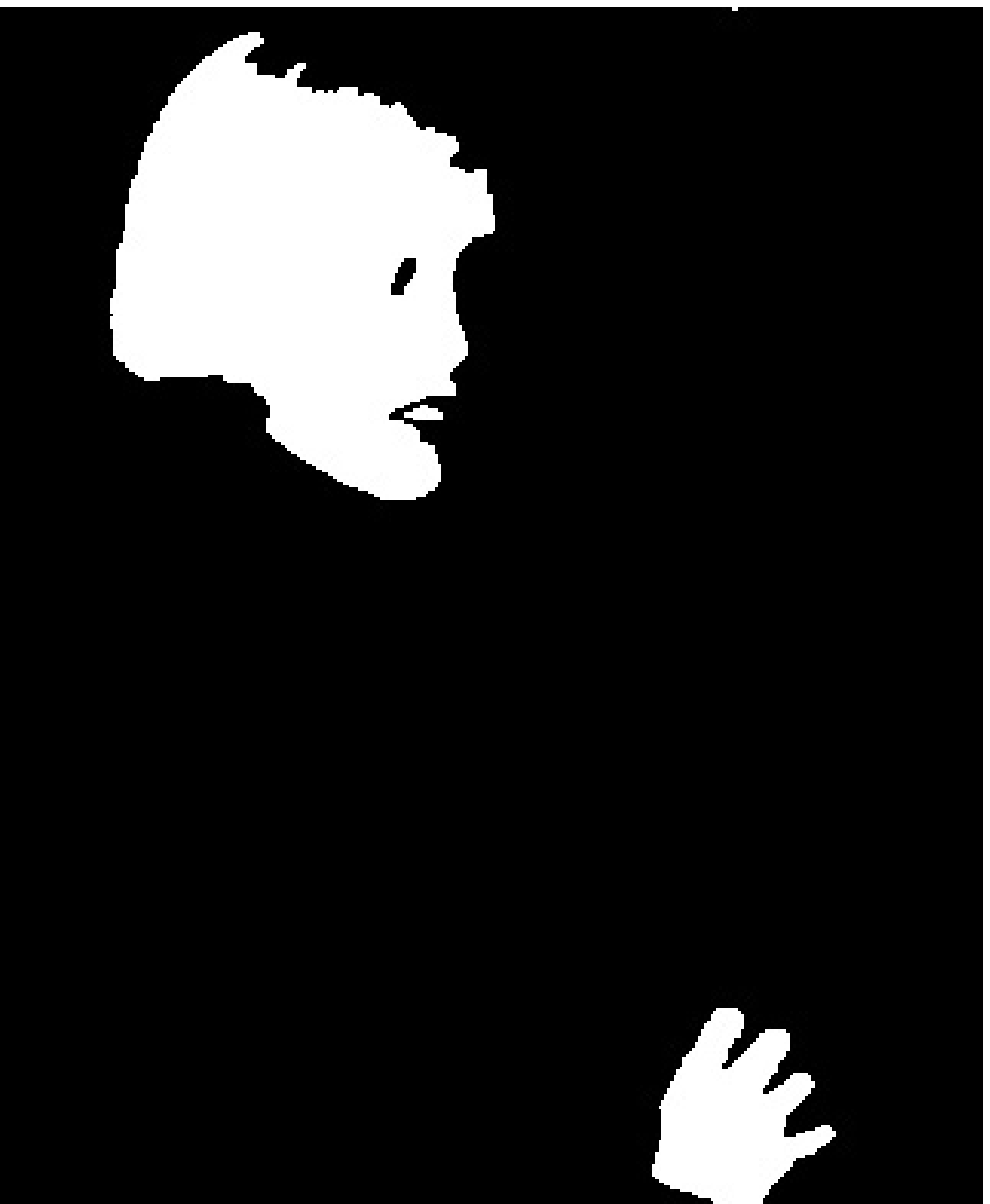}\label{fig:8j}}\
\subfloat[ ]{\includegraphics[width=1.893cm]{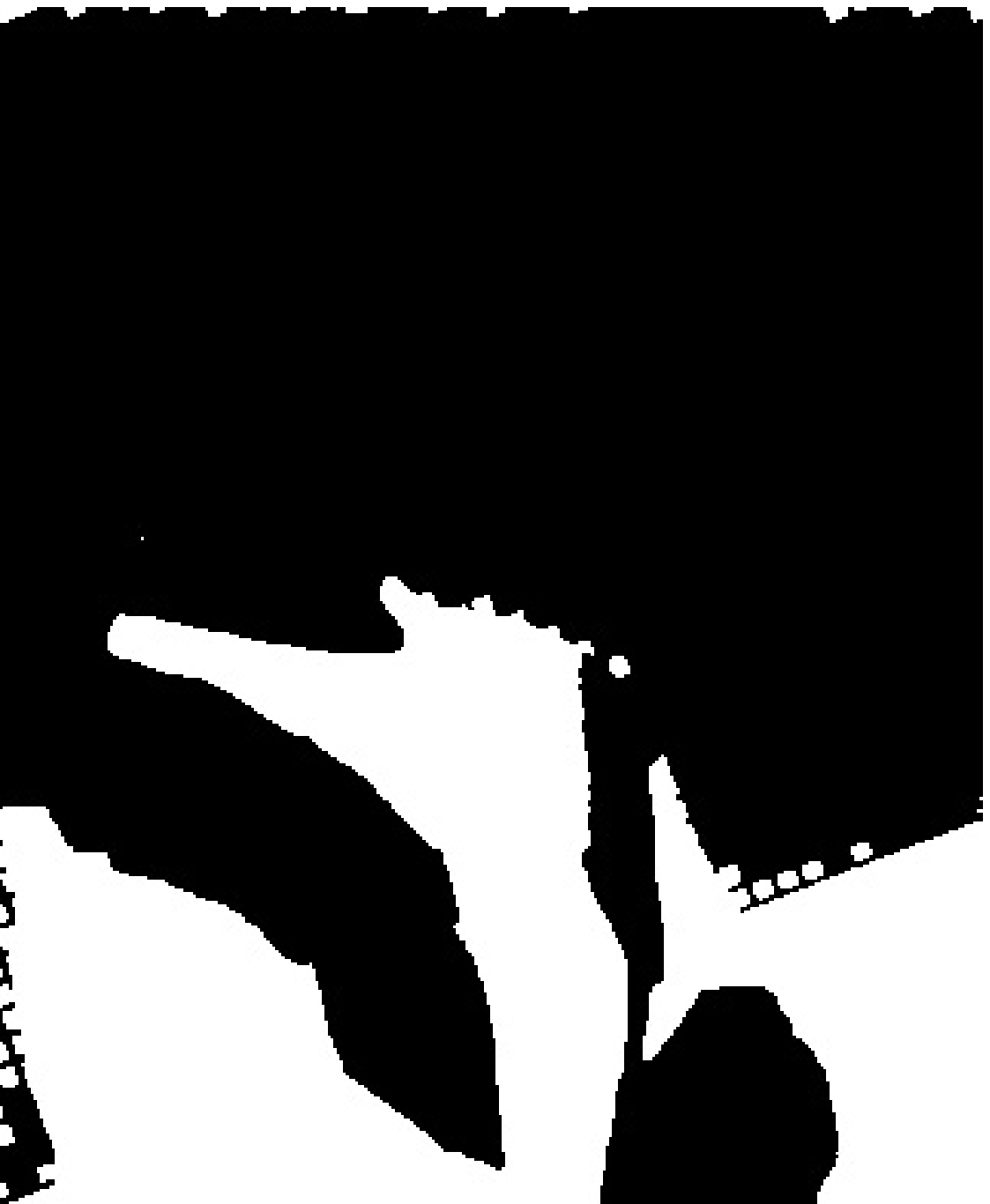}\label{fig:8k}}\
\subfloat[ ]{\includegraphics[width=1.893cm]{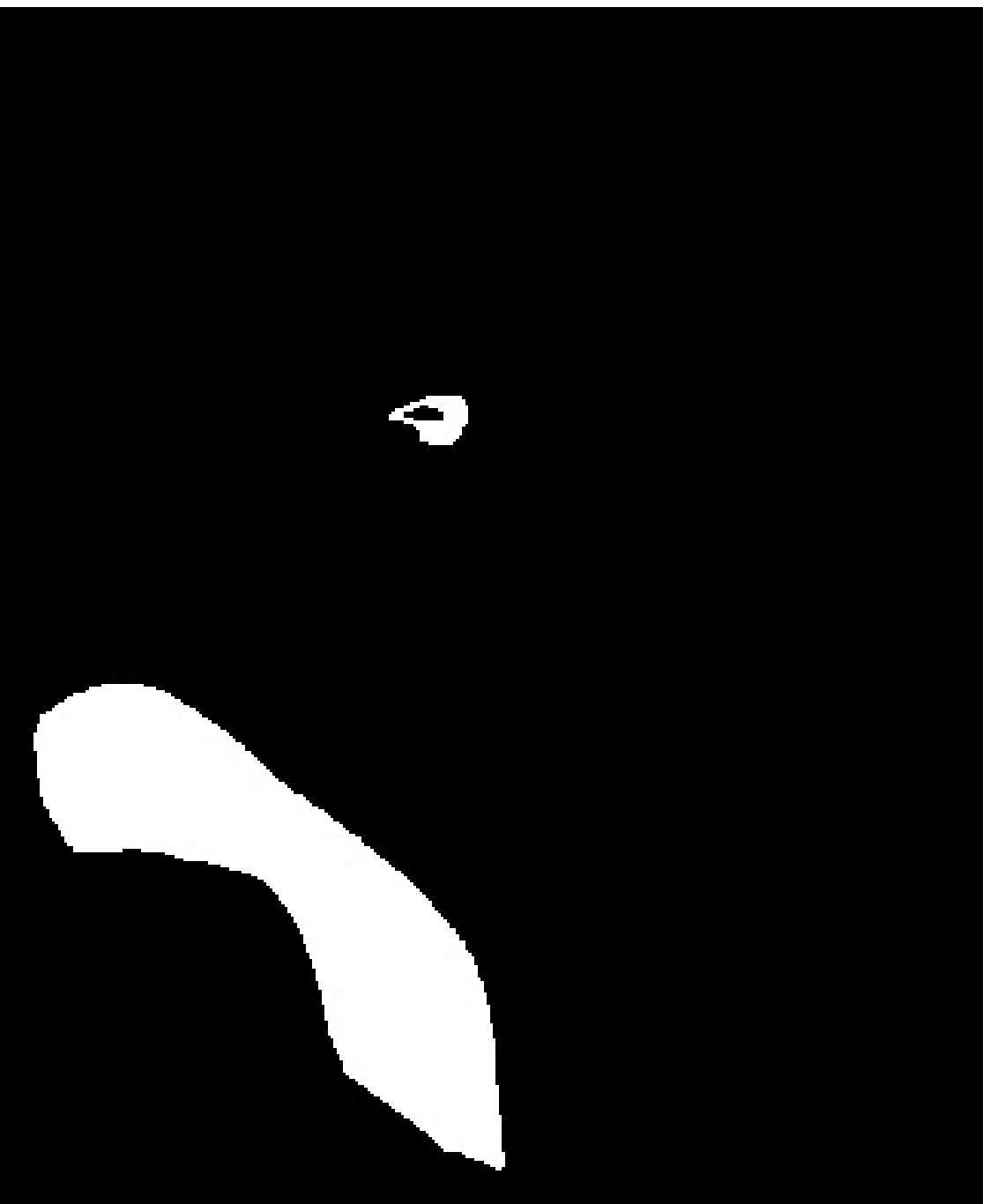}\label{fig:8l}}
\end{center}
\caption{Detailed comparison of FCM and L1FS on the woman image. First row: six segmented regions of FCM. Second row: six segmented regions of L1FS.}\label{fig:8}
\end{figure}

\section{Conclusion}\label{sec:conclusion}
This paper presents a novel piecewise constant image segmentation model based on fuzzy membership functions and L1-norm fidelity. ADMM is applied to derive an efficient numerical algorithm, in which each subproblem has a closed-form solution. In particular, an efficient algorithm is proposed to find the fuzzy median. The numerical results on both synthetic and real piecewise constant images demonstrate that the proposed method is superior to some existing state-of-the-art methods since it is more robust to impulse noise and can preserve better contrast. Even in the case of Gaussian noise, the proposed method can achieve similar results as its L2-fidelity counterpart. In this work, we assume that the image to be dealt with is piecewise constant, which works well on images with homogeneous image features. The future work is to extend this framework to piecewise smooth image segmentation.

\section*{Acknowledgment}
The research of F. Li was supported by the 973 Program 2011CB707104, the research of S. Osher and J. Qin was supported by ONR Grants N00014120838 and N00014140444, NSF Grants DMS-1118971 and CCF-0926127, and the Keck Foundation, the research of M. Yan was supported by NSF Grants DMS-1317602. The work was done when the first author was visiting UCLA Department of Mathematics.


\end{document}